\newtheorem{theo}{Theorem}[section]
\newtheorem{pro}[theo]{Proposition}
\newtheorem{lem}[theo]{Lemma}
\newtheorem{cor}[theo]{Corollary}
\theoremstyle{definition}
\newtheorem{defin}[theo]{Definition}
\newtheorem{exa}[theo]{Example}
\newtheorem{notation}[theo]{Notation}
\theoremstyle{remark}
\newtheorem{rem}[theo]{Remark}
\numberwithin{equation}{section}
\newcommand{\SU}{\operatorname{SU}}
\newcommand{\tr}{\operatorname{tr}}
\newcounter{keepgoing}
\begin{document}

\newcommand{\sgn}{\operatorname{sgn}}

\newcommand{\pminterval}[1]{\Biggl[- {#1}, {#1}\Biggr]}

\title{Uniform doubling for abelian products with $\SU(2)$}

\author[Eldredge ]{Nathaniel Eldredge {$^{\dag}$}}
\thanks{\footnotemark {$^{\dag }$} Research was supported in part by a grant from the Simons Foundation (\#355659, Nathaniel Eldredge), and by funding from the University of Northern Colorado's Faculty Research and Publications Board.}
\address{$^{\dag }$ Department of Mathematical Sciences\\
  University of Northern Colorado\\
  Greeley, CO 80639, U.S.A.
}
\email{neldredge@unco.edu}

\author[Gordina]{Maria Gordina{$^{\dag \dag}$}}
\thanks{\footnotemark {$\dag \dag$} Research was supported in part by NSF Grant DMS-2246549.}
\address{$^{\dag \dag}$ Department of Mathematics\\
University of Connecticut\\
Storrs, CT 06269,  U.S.A.}
\email{maria.gordina@uconn.edu}

\author[Saloff-Coste]{Laurent Saloff-Coste{$^{\ddag }$}}
\thanks{\footnotemark {$\ddag $} Research was supported in part by NSF grants DMS-2054593 and DMS-2343868 and by a Simons Fellowship \#915128.}
\address{$^{\ddag }$
Department of Mathematics \\
Cornell University
}
\email{lsc@math.cornell.edu}

\keywords{Volume doubling, compact Lie group, heat kernel.}

\subjclass{Primary 53C21; Secondary 35K08, 53C17, 58J35, 58J60, 22C05, 22E30}

\begin{abstract} We prove that the uniform doubling property holds for every Lie group  which can be written as a quotient group of $\operatorname{SU}(2) \times \mathbb{R}^n$ for some $n$. In particular, this class includes the four-dimensional unitary group $\operatorname{U}(2)$. As this class contain non-compact as well as compact Lie groups, we discuss a number of analytic and spectral consequences
for the corresponding heat kernels.
\end{abstract}

\maketitle 

\tableofcontents

\section{Introduction}\label{s.intro}

The purpose of this paper is to extend the study of the phenomenon of \textbf{uniform volume doubling} on Lie groups, begun by the authors in \cite{EldredgeGordinaSaloff-Coste2018}.

Let $G$ be a finite-dimensional connected real Lie group with identity $e$; for simplicity, assume $G$ is unimodular.  Given a left-invariant Riemannian metric $g$ on $G$, denote by $B_g(x,r)$ the ball with respect to the induced Riemannian distance $d_g$, centered at $x \in G$ with radius $r \geqslant 0$.  Let $\mu_g$ denote the Riemannian volume measure induced by $g$, which is some rescaling of the Haar measure on $G$.  The \textbf{volume doubling constant} of $(G,g)$ is defined as
\begin{equation}
 D_g = \sup_{x \in G, r > 0} \frac{\mu_g(B_g(x,2r))}{\mu_g(B_g(x,r))} = \sup_{r > 0} \frac{\mu_g(B_g(e,2r))}{\mu_g(B_g(e,r))}
\end{equation}
where the ratio $\frac{\mu_g(B_g(x,2r))}{\mu_g(B_g(x,r))}$ is independent of $x \in G$ by the left invariance.

Now let $\mathfrak{L}(G)$ denote the set of \emph{all} left-invariant Riemannian metrics on $G$.   If there is a finite constant $D(G)$, depending only on the group $G$, such that $D_g \leqslant D(G)$ for all $g \in \mathfrak{L}(G)$, we say that $G$ is \textbf{uniformly doubling}.

A trivial example of a uniformly doubling Lie group is the abelian Lie group $\mathbb{R}^n$, for which we have $D(\mathbb{R}^n) = 2^n$.  It follows from this that every abelian Lie group, being a quotient of some $\mathbb{R}^n$, is uniformly doubling as well; see Section~\ref{quotients} for more on the behavior of volume doubling under quotients.  Beyond these trivial cases, it is not obvious that any other examples exist.  However, we showed in \cite{EldredgeGordinaSaloff-Coste2018} that the three-dimensional special unitary group $\mathrm{SU}(2)$ is uniformly doubling.  Our proof was quite specific to the structure of $\mathrm{SU}(2)$ itself, and does not directly generalize to other Lie groups.   Nonetheless, there are heuristic reasons to believe that the uniform doubling property is actually satisfied by \emph{every} compact connected Lie group, which we stated as a conjecture in \cite{EldredgeGordinaSaloff-Coste2018}.  However, as we currently lack general techniques for attacking the full conjecture, our more immediate goal is to find additional specific examples of uniformly doubling Lie groups.

After $\operatorname{SU}(2)$, a natural next target was the four-dimensional unitary group $\mathrm{U}(2)$.  In this paper, we prove that $\mathrm{U}(2)$ is indeed uniformly doubling.  In fact, we prove the uniform doubling property holds for every Lie group in a larger class: namely, those which can be written as a quotient group of $\mathrm{SU}(2) \times \mathbb{R}^n$ for some $n$.    Note that $\mathrm{U}(2)$ itself is indeed of this form, as we have a surjective homomorphism from $\mathrm{SU}(2) \times \mathbb{R}^1$ onto $\mathrm{U}(2)$ defined by $(A, t) \mapsto e^{it} A$, whose kernel is the normal subgroup generated by $(-I, \pi)$.
  
\begin{theo}\label{main-intro}
Let $G$ be a Lie group such that $G \cong (\mathrm{SU}(2) \times \mathbb{R}^n) / H$ for some closed normal subgroup $H \leqslant \mathrm{SU}(2) \times \mathbb{R}^n$.  Then $G$ is uniformly doubling, with a constant $D(G)$ that depends only on $n$.  

In particular, the unitary group $\mathrm{U}(2)$ is uniformly doubling, as are all groups of the form $\mathrm{SU}(2) \times A$, $\mathrm{SO}(3) \times A$, $\mathrm{U}(2) \times A$, where $A$ is any connected finite-dimensional abelian Lie group. 
\end{theo}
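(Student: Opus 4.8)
The plan is to reduce everything to the two known building blocks — $\SU(2)$ (shown uniformly doubling in \cite{EldredgeGordinaSaloff-Coste2018}) and $\mathbb{R}^n$ — together with good behavior of uniform doubling under two operations: taking direct products, and passing to quotients by closed normal subgroups. First I would establish (or invoke from Section~\ref{quotients}) that if $G$ is uniformly doubling with constant $D(G)$ and $H \leqslant G$ is a closed normal subgroup, then $G/H$ is uniformly doubling with $D(G/H)$ controlled by $D(G)$; the point is that a left-invariant metric on $G/H$ pulls back to a left-invariant (sub-Riemannian, or Riemannian after choosing a complement) structure on $G$, balls in the quotient are images of balls upstairs, and the pushforward of Haar measure behaves well, so the doubling ratio downstairs is dominated by a doubling-type ratio upstairs. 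Given such a quotient statement, Theorem~\ref{main-intro} follows immediately once we know $\SU(2) \times \mathbb{R}^n$ is uniformly doubling with constant depending only on $n$.

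So the heart of the matter is: \emph{$\SU(2) \times \mathbb{R}^n$ is uniformly doubling}. The naive hope — that a product of uniformly doubling groups is uniformly doubling — is false in the generality one might want, because an arbitrary left-invariant metric on $\SU(2) \times \mathbb{R}^n$ need not split as a product metric; it can mix the $\SU(2)$ and $\mathbb{R}^n$ directions in an essentially arbitrary positive-definite way. The key structural observation to exploit is that $\mathbb{R}^n$ is central (indeed $\SU(2) \times \mathbb{R}^n$ has center containing $\mathbb{R}^n$ with abelian quotient-relevant piece), and more importantly that $\mathfrak{su}(2)$ is \emph{simple}, so the only ideals are $0$ and itself. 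This rigidity should let us normalize an arbitrary inner product on $\mathfrak{su}(2) \oplus \mathbb{R}^n$: by an automorphism of $\SU(2)$ (which acts as $\mathrm{SO}(3)$ on $\mathfrak{su}(2)$) and a linear change of coordinates on $\mathbb{R}^n$, one can hope to bring the metric to a normal form — e.g. diagonalize the $\mathfrak{su}(2)$ block, then straighten the cross terms — reducing to a bounded-dimensional family of "model" metrics, or at least to metrics for which the $\SU(2)$-doubling estimate from \cite{EldredgeGordinaSaloff-Coste2018} can be fed in.

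Concretely, I would proceed as follows. (i) Reduce to left-invariant metrics, i.e. to inner products $\langle\cdot,\cdot\rangle$ on the Lie algebra $\mathfrak{g} = \mathfrak{su}(2) \oplus \mathbb{R}^n$. (ii) Use the $\SU(2)$-action and $\mathrm{GL}(n,\mathbb{R})$-action to put $\langle\cdot,\cdot\rangle$ into a normal form; track how the doubling constant transforms under these (it is invariant under isometric automorphisms and under the $\mathbb{R}^n$-coordinate change). (iii) For the normalized metric, compare the distance $d_g$ on $\SU(2)\times\mathbb{R}^n$ with a product-type distance: show that $d_g$ is bi-Lipschitz, with constants depending only on $n$, to a distance of the form $\max(c_1 d_{\SU(2)}, c_2 |\cdot|_{\mathbb{R}^n})$ on a possibly different-but-comparable decomposition — this is where the compactness/boundedness of $\SU(2)$ (finite diameter) is crucial, since it means the cross terms can distort the geometry only by a bounded amount at large scales. (iv) Conclude: volume doubling for such a comparable product distance follows from doubling of $\SU(2)$ (uniform over its left-invariant metrics) times $2^n$ for the Euclidean factor, with the bi-Lipschitz constant contributing a factor depending only on $n$.

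The main obstacle I anticipate is step (iii): controlling the cross terms between $\mathfrak{su}(2)$ and $\mathbb{R}^n$ uniformly. A generic inner product can make geodesics spiral through the $\SU(2)$ directions while drifting in $\mathbb{R}^n$, and one must rule out that this spiraling produces non-uniform volume growth. I expect the resolution to hinge on the bounded diameter of $\SU(2)$ together with a quantitative statement that the $\mathbb{R}^n$-component of any unit-speed geodesic is close to linear — so that balls of radius $r$ look like (something comparable to) $\SU(2) \times (\text{Euclidean ball of radius} \sim r)$ once $r$ exceeds $O(1)$, while for small $r$ the metric is comparable to a left-invariant metric on the nilpotent-like local model and one can use a scaling/Bishop–Gromov-type argument. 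Making the $r \lesssim 1$ versus $r \gtrsim 1$ matching quantitative and uniform in the metric is the delicate point.
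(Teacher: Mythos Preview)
Your quotient reduction is correct and matches the paper. The genuine gap is in step~(iii): the bi-Lipschitz comparison of $d_g$ to a product-type distance cannot be made uniform. As Remark~\ref{non-product} in the paper spells out, for a general left-invariant metric on $G_1 \times G_2$ the minimum angle $\theta_g$ between the two factors can be arbitrarily close to $0$, and the bi-Lipschitz constant one gets is of order $1/\sin\theta_g$, giving a doubling bound like $(D_1 D_2)^{\lceil 2 - \log_2 \sin\theta_g\rceil}$ that blows up. Your proposed rescue via the ``bounded diameter of $\SU(2)$'' does not work either: the diameter of $\SU(2)$ \emph{in the metric $g$} is of order $a_3$ (the largest Milnor parameter), which is unbounded over $\mathfrak{L}(G)$, so there is no fixed scale at which balls look like $\SU(2)\times(\text{Euclidean ball})$. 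Even after normalizing the diameter, the entire difficulty is pushed into the small-$r$ regime, where the cross terms $h_i$ (in the paper's notation $v_i = u_i + h_i$) remain and can be arbitrarily large relative to the $\mathfrak{su}(2)$ block.

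The paper's route is fundamentally different and does not attempt a product comparison. It \emph{lifts} an arbitrary metric on $\SU(2)\times\mathbb{R}^n$ to a \emph{decoupled} metric on $\SU(2)\times\mathbb{R}^{n+3}$ (Lemmas~\ref{lift-euclidean}--\ref{lift-to-decoupled}): one adds three extra central directions so that the skew vectors $h_1,h_2,h_3\in\mathfrak{z}$ become orthogonal of equal length $d$. A decoupled metric is still not a product metric, but it admits a four-parameter normal form $(a_1,a_2,a_3,d)$ and a basis with clean bracket relations. After peeling off the now-orthogonal extra Euclidean factor (Lemma~\ref{dec-product}), everything reduces to decoupled metrics on $\SU(2)\times\mathbb{R}^3$, for which the paper proves an explicit two-sided volume formula (Theorem~\ref{volume-estimate}) via hands-on exponential identities in $\SU(2)$. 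In short, your normalization idea is in the right spirit, but the missing move is to \emph{enlarge} the group rather than try to straighten the cross terms within it.
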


One could give an alternative phrasing of Theorem \ref{main-intro} by noting that $G$ is of the form $G \cong (\mathrm{SU}(2) \times \mathbb{R}^n) / H$ if and only if the universal cover of $G$ is $\mathrm{SU}(2) \times \mathbb{R}^m$ (or $\mathbb{R}^m$, in the trivial case when $G$ is abelian) for some $m \le n$.
  
Our results also imply a certain uniform doubling property for homogeneous spaces over $\mathrm{SU}(2) \times \mathbb{R}^n$, which we can view as quotients of $\mathrm{SU}(2) \times \mathbb{R}^n$ by non-normal subgroups.  However, we concentrate on the case of quotient Lie groups.  See Corollary \ref{uniformly-homogeneous} and its accompanying discussion. 

We remark that although $\mathrm{SU}(2)$ and $\mathbb{R}^n$ are each uniformly doubling (the former as proved in \cite{EldredgeGordinaSaloff-Coste2018}, the latter trivially), it does not follow trivially that the product Lie group $\mathrm{SU}(2) \times \mathbb{R}^n$ is uniformly doubling.  In general, if $G_1,G_2$ are uniformly doubling Lie groups, what does follow trivially is that the class of all \emph{product} left-invariant Riemannian metrics on $G = G_1 \times G_2$ is uniformly doubling.  That is, if we consider the class of Riemannian metrics $g$ on $G$ that are of the form $g = g_1 \oplus g_2$ for $g_i \in \mathfrak{L}(G_i)$, then there is a uniform upper bound on the doubling constants of all such metrics $g$, given indeed by $(D(G_1) \cdot D(G_2))^2$.  However, this is not the same as saying that $G$ is uniformly doubling, because the latter requires a uniform upper bound on the doubling constants of \emph{all} left-invariant Riemannian metrics $g$ on $G$, most of which are \emph{not} of the form $g = g_1 \oplus g_2$.  See Proposition~\ref{doubling-product} and Remark~\ref{non-product} below for further discussion of this issue.

In \cite{EldredgeGordinaSaloff-Coste2018}, our starting point was a result of Milnor that each left-invariant Riemannian metric on $\mathrm{SU}(2)$ (or, equivalently, every inner product on its Lie algebra $\mathfrak{su}(2)$) admits an orthogonal basis of a special form, which we refer to as a \emph{standard Milnor basis}.  Using this, we obtained a parametrization of $\mathfrak{L}(\mathrm{SU}(2))$.  However, because of the need to consider non-product metrics as noted above, this does not immediately yield a similar parametrization of the left-invariant Riemannian metrics on $\mathrm{SU}(2) \times \mathbb{R}^n$. Nonetheless, we do eventually obtain a parametrization of a certain sub-class of these metrics, in terms of a similar special basis, and this turns out to be sufficient for our purposes.  This is carried out in Section \ref{decoupled}.

Volume doubling is widely used as an essential tool in geometric analysis, particularly in the context of metric measure spaces, where it can often substitute for curvature bounds.  Indeed, the celebrated Bishop--Gromov comparison theorem shows that Ricci curvature lower bounds for a Riemannian metric imply an upper bound on its volume doubling constant.  (Though not conversely; in fact, a uniformly doubling Lie group will typically \emph{not} admit a uniform lower bound on the Ricci curvatures of its left-invariant Riemannian metrics.  See \cite[Section~1.2]{EldredgeGordinaSaloff-Coste2018} for further discussion and examples.)  As such, the uniform doubling property should be of inherent interest in its own right.

However, in the setting of Lie groups in particular, volume doubling is especially interesting, because it implies a number of other important functional inequalities.  The starting point is a scale-invariant Poincar\'e inequality of the form
  \begin{equation}\label{poincare-intro}
    \int_{B_g(x,r)} |f - f_{x,r}|^2 \,d\mu_g \leqslant 2 r^2 D_g \int_{B_g(x, 2r)} |\nabla_g f|_g^2\,d\mu_g
  \end{equation}
where $f_{x,r} = \fint_{B_g(x,r)} f\,d\mu_g$ is the mean of $f$ over $B_g(x,r)$.  Note that the constant in \eqref{poincare-intro} is precisely the volume doubling constant of the left-invariant Riemannian metric $g$, and so when $G$ is uniformly doubling, we have \eqref{poincare-intro} with the same constant $D(G)$ for all $g \in \mathfrak{L}(G)$; that is, we have a uniform Poincar\'e inequality.  The Poincar\'e inequality in turn can be used to prove a number of other functional inequalities, such as heat kernel estimates, and, in the case of a compact group $G$, spectral gap and Weyl eigenvalue counting estimates.  We briefly review some of these consequences in Section~\ref{s.consequences} below, but refer to \cite[Section~8]{EldredgeGordinaSaloff-Coste2018} for a more extended development and survey of related literature.

The structure of the paper is as follows.  In Section \ref{quotients}, we discuss, at a general level, how the volume doubling property passes from a Lie group to its quotient groups and homogeneous spaces.  This leaves us only needing to prove Theorem \ref{main-intro} for $\mathrm{SU}(2) \times \mathbb{R}^n$ itself.  In Section \ref{decoupled}, we perform a further reduction, showing that it suffices to handle the case $n=3$, and moreover (Theorem \ref{doubling-reduction}) that we can restrict our attention to a smaller class $\mathfrak{L}_{\mathrm{dec}}(\mathrm{SU}(2) \times \mathbb{R}^3)$ of left-invariant Riemannian metrics on $\mathrm{SU}(2) \times \mathbb{R}^3$, which we call \emph{decoupled}.  Such a metric admits an orthogonal basis satisfying useful algebraic properties, analogous to (but more complicated than) those of a standard Milnor basis which we used for $\mathrm{SU}(2)$ in \cite{EldredgeGordinaSaloff-Coste2018}, and we get a convenient parametrization of this restricted space of metrics $\mathfrak{L}_{\mathrm{dec}}(\mathrm{SU}(2) \times \mathbb{R}^3)$.  For decoupled metrics $g$, we are able to produce an explicit uniform two-sided estimate on the volumes of balls $B_g(x,r)$ (Theorem \ref{volume-estimate}).  The short Section \ref{volume-statement} is devoted to stating this estimate, with accompanying notation, and explaining how it implies uniform doubling for $\mathfrak{L}_{\mathrm{dec}}(\mathrm{SU}(2) \times \mathbb{R}^3)$ (Corollary \ref{decoupled-uniformly-doubling}. Then Corollary \ref{decoupled-uniformly-doubling}, together with the reduction from Theorem \ref{doubling-reduction}, implies the main result,  Theorem~\ref{main-intro}.

In the remainder of the paper, Sections~\ref{identities-sec}--\ref{upper-sec}, we carry out the proof of Theorem \ref{volume-estimate}.  Section~\ref{identities-sec} contains computations of various identities involving the exponential map in $\mathrm{SU}(2)$, which are useful in estimating the distance to a point; they are similar in spirit to those in \cite[Section~2.4]{EldredgeGordinaSaloff-Coste2018} but more detailed.  Section~\ref{coordinates-sec} introduces a convenient coordinate system on $\mathrm{SU}(2) \times \mathbb{R}^3$ and defines explicit subsets of the coordinate chart whose images turn out to be comparable to the metric balls.  Finally, Sections \ref{lower-sec} and \ref{upper-sec} respectively prove the lower and upper bounds asserted by Theorem~\ref{volume-estimate}, by showing that a ball contains (respectively, is contained in) a set of the form described in Section~\ref{coordinates-sec}, whose volume can be computed by elementary means.

\section{Uniform doubling for quotients and homogeneous spaces}\label{quotients}

An important fact about volume doubling is that it is preserved \emph{quantitatively} by passage to a quotient.  That is, if $G, H$ are two Lie groups with a surjective homomorphism $\pi : G \to H$, and
$g,h$ are left-invariant Riemannian (or sub-Riemannian) metrics on $G,H$ respectively such that $\pi$ is a submersion, then the doubling constant of $(H,h)$ can be bounded in terms of the doubling constant of $(G,g)$.

In this paper, we take advantage of this fact twice.  First, we show that for any group of the form $H = \operatorname{SU}(2) \times \mathbb{R}^n$, there is a larger group $G$ (namely $G = \operatorname{SU}(2)
\times \mathbb{R}^{n+3}$) such that every left-invariant metric on $H$ can be lifted to a left-invariant metric on $G$ which is of a certain convenient form that we call \emph{decoupled}.  Thus it is sufficient to show that the class $\mathcal{L}_{\operatorname{dec}}(G)$ of \emph{decoupled} left-invariant metrics on $G$ is uniformly doubling, which is convenient because the decoupled property greatly simplifies
the explicit computations needed for volume estimates.  Once this is done, by applying the quotient fact a second time, we get as an immediate corollary the uniform doubling for any group $K$ that is a quotient of some $\operatorname{SU}(2) \times \mathbb{R}^n$, such as for instance $K = \operatorname{U}(2)$.

The general principle that doubling is preserved under quotients seems to be well-known to experts, but the details are somewhat scattered around the literature.  As such, to make this paper more self-contained, we will review the argument here. We begin in the more general setting of homogeneous spaces, although this generality is not needed for the rest of the paper.  The special case of quotient Lie groups is discussed at the end of the section.

Let $G$ be a unimodular connected Lie group with a smooth transitive right action on a smooth manifold $M$.  Fix $p \in M$ and define $\pi: G \to M$ by $\pi(\sigma) = p \cdot \sigma$.  Let $S_{p} \subset G$ be the stabilizer of $p$, so that $S_{p}$ is a closed subgroup of $G$.  Also,
let $\mu$ be a bi-invariant Haar measure on $G$.

The following lemma is essentially a special case of \cite[Proposition 2.28]{GallotHulinLafontaineBook3rdEdition}. 

\begin{lem}\label{pushforward-metric}
For each left-invariant Riemannian metric $g$ on $G$, there is a unique Riemannian metric $h$ on $M$, which we denote as the push-forward $h = \pi_{\ast} g$, for which $\pi$ is a Riemannian  submersion.  Moreover, $h$ does not depend on the choice of $p$.
\end{lem}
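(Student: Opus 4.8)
The plan is to construct $h$ fiberwise from $g$ using the standard horizontal/vertical decomposition, then check that the construction is independent of the chosen point $p$ and produces a genuine Riemannian submersion.

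First I would fix $p \in M$ and the orbit map $\pi : G \to M$, $\pi(\sigma) = p \cdot \sigma$, which is a smooth surjective submersion with fibers the right cosets $S_p \sigma$ of the stabilizer $S_p$. For each $\sigma \in G$, the kernel of $d\pi_\sigma : T_\sigma G \to T_{\pi(\sigma)} M$ is the vertical subspace $V_\sigma = \ker d\pi_\sigma$ (the tangent space to the fiber), and I would define the horizontal subspace $\mathcal{H}_\sigma = V_\sigma^{\perp_g}$ using the inner product $g_\sigma$ on $T_\sigma G$. Then $d\pi_\sigma$ restricts to a linear isomorphism $\mathcal{H}_\sigma \to T_{\pi(\sigma)} M$, and I would \emph{define} $h_{\pi(\sigma)}$ by requiring this restriction to be a linear isometry, i.e. $h_{\pi(\sigma)}(X, Y) := g_\sigma\bigl((d\pi_\sigma|_{\mathcal{H}_\sigma})^{-1} X,\, (d\pi_\sigma|_{\mathcal{H}_\sigma})^{-1} Y\bigr)$. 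By construction, if this is well-defined it is the unique metric making $\pi$ a Riemannian submersion, since being a Riemannian submersion forces exactly this identity on horizontal vectors.

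The crux is well-definedness: a given point $q \in M$ has many preimages $\sigma$ under $\pi$, namely the entire coset $S_p \sigma_0$, and I must check that all of them yield the same inner product on $T_q M$. Two preimages differ by left translation: if $\pi(\sigma) = \pi(\tau)$ then $\tau = s\sigma$ for some $s \in S_p$, and since $g$ is left-invariant, $L_s : G \to G$ is an isometry carrying $T_\sigma G$ to $T_\tau G$, carrying $V_\sigma$ to $V_\tau$ (because $L_s$ permutes the fibers of $\pi$, as $\pi \circ L_s = \pi$ — here one uses that $s$ stabilizes $p$, so $p \cdot (s \sigma) = (p \cdot s)\cdot \sigma = p \cdot \sigma$), hence carrying $\mathcal{H}_\sigma$ isometrically to $\mathcal{H}_\tau$; and $\pi \circ L_s = \pi$ gives $d\pi_\tau \circ d(L_s)_\sigma = d\pi_\sigma$, so the two horizontal identifications of $T_q M$ agree. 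This is the step I expect to be the main (though not difficult) obstacle — everything else is bookkeeping. Smoothness of $h$ follows by choosing a local smooth section $s : U \to G$ of $\pi$ (which exists since $\pi$ is a submersion) and writing $h|_U = s^* (g|_{\mathcal{H}})$; independence of the section is exactly the well-definedness just proved.

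Finally, independence of the base point $p$: if $p' = p \cdot \sigma_0$ for some $\sigma_0 \in G$ (all points lie in one orbit by transitivity), then the orbit map $\pi'$ based at $p'$ satisfies $\pi'(\sigma) = p' \cdot \sigma = p \cdot (\sigma_0 \sigma) = \pi(\sigma_0 \sigma) = (\pi \circ L_{\sigma_0})(\sigma)$, so $\pi' = \pi \circ L_{\sigma_0}$. Since $L_{\sigma_0}$ is a $g$-isometry of $G$, the push-forward metric it induces via $\pi'$ coincides with the one induced via $\pi$; concretely, $\pi'$ is a Riemannian submersion onto $(M,h)$ precisely because $\pi$ is and $L_{\sigma_0}$ is an isometry, and uniqueness then identifies the two push-forwards. (The hypothesis that $G$ is unimodular and $\mu$ bi-invariant is not needed for this lemma itself; it will be used later when comparing volumes, as the pushforward of $\mu$ along $\pi$ gives the Riemannian volume of $h$ up to scaling.) This completes the proof.
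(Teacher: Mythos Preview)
Your proposal is correct and follows essentially the same approach as the paper. The paper delegates the existence/uniqueness step to \cite[Proposition~2.28]{GallotHulinLafontaineBook3rdEdition} (noting that the key point is the left action of $S_p$ by isometries), whereas you spell out the horizontal/vertical construction and the well-definedness check explicitly; your argument for independence of $p$ via $\pi' = \pi \circ L_{\sigma_0}$ and uniqueness is exactly the paper's.
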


\begin{proof}
Note that $S_{p}$ acts as a group of isometries on $G$ by  multiplication, so that $M$ is diffeomorphic to the right coset space $G \backslash S_{p}$, and the action of $G$ on this space is indeed on
the right.  Thus we may apply \cite[Proposition 2.28]{GallotHulinLafontaineBook3rdEdition}, taking $(\tilde{M}, \tilde{g}) = (G, g)$ and $G=S_{p}$, to obtain the existence and uniqueness of the Riemannian metric $h$ (which is $g$ in their notation) for which $\pi$ is a Riemannian submersion.  The proof is
straightforward: for each $\sigma \in G$, there is only one possible way to define $h$ on $T_{\pi(\sigma)} M$, and it is then routine to verify that it is well-defined and smooth on $M$.

Independence of the choice of $p$ also follows from the left invariance. Let $h = \pi_{\ast} g$ be as above.  Take some other $p^{\prime} \in M$, and let $\pi^{\prime}(\sigma) = p^{\prime} \cdot \sigma$.  Then $\pi^{\prime}_{\ast} g$ is the unique Riemannian metric on $M$ for which $d\pi^{\prime}$ is a Riemannian submersion. We now claim that $d\pi^{\prime}$ is also a Riemannian submersion for the metric $h = \pi_{\ast} g$, from which it follows that $h = \pi^{\prime}_{\ast} g$. Indeed, since the action of $G$ is transitive, we can find $\tau \in G$ with $p^{\prime} = p \tau$, so $\pi^{\prime} = \pi \circ L_\tau$.  Thus $d\pi^{\prime} = d\pi \circ dL_{\tau}$, where $d\pi$ is a partial isometry from $(G, g)$ to $(M, h)$, and $dL_{\tau}$ is an isometry of $(G,g)$.  So $d\pi^{\prime}$ is
also a partial isometry from $(G,g)$ to $(M,h)$, which is to say that $\pi^{\prime}$ is a Riemannian submersion.
\end{proof}

Note that the Riemannian metric $h = \pi_{\ast} g$ is not necessarily $G$-invariant, unless the metric $g$ is bi-invariant.  The following example illustrates this phenomenon.

\begin{exa}\label{sphere-non-invariant}
Let $M = S^2$ be the unit sphere of $\mathbb{R}^3$, realized as a set of row vectors, with $p = (0,0,1)$ the north pole.  Consider the right action of $G =\operatorname{SO}(3)$ on $S^2$ by matrix multiplication on the right.  A basis for $T_e \operatorname{SO}(3) =\mathfrak{so}(3)$ is given by the matrices
\begin{equation*}
 u_1 =
 \begin{pmatrix}
   0 & 0 & 0 \\ 0 & 0 & -1 \\ 0 & 1 & 0
 \end{pmatrix},
 u_2 =
 \begin{pmatrix}
   0 & 0 & -1 \\ 0 & 0 & 0 \\ 1 &0 & 0
 \end{pmatrix},
 u_3 =
 \begin{pmatrix}
   0 & -1 & 0 \\ 1 & 0 & 0 \\ 0 & 0 & 0
 \end{pmatrix}.
\end{equation*}
It is easily checked that
\begin{equation*}
 d\pi_e u_1 = \partial_y, \qquad d\pi_e u_2 = \partial_x, \qquad
 d\pi_e u_3 = 0.
\end{equation*}
Let $g$ be a left-invariant Riemannian metric on $\operatorname{SO}(3)$ for which $u_1, u_2, u_3$ are orthogonal with distinct lengths $g_e(u_i, u_i) = a_i^2$, $i=1,2,3$.  Then $\operatorname{Hor}_e$ is spanned by $u_1, u_2$, so the induced Riemannian metric $h$ on $S^2$ makes $\partial_x, \partial_y \in T_p S^2$ orthogonal, with $h_p(\partial_x, \partial_x) = a_2^2$, $h_p(\partial_y, \partial_y) = a_1^2$.

Now let 
$\sigma = \left(
  \begin{smallmatrix}
 0 & 0  & 1 \\ 0 & 1 & 0 \\ -1 & 0 & 0
\end{smallmatrix} \right)$ 
act as a $\pi/2$ counterclockwise rotation about the $y$ axis, so that $q = p \cdot \sigma = (-1, 0, 0)$ is the \emph{west pole}.  One can compute that
\begin{equation*}
d\pi_\sigma dL_\sigma e_1 = 0, \qquad d\pi_\sigma dL_\sigma e_2 =
\partial_z, \qquad d\pi_\sigma dL_\sigma e_3 = \partial_y
\end{equation*}
and by the left-invariance of $g$, we have that $dL_\sigma e_i, i=1, 2, 3$ are orthogonal in $T_\sigma \operatorname{SO}(3)$, so $\operatorname{Hor}_\sigma$ is spanned by $e_2, e_3$.  Thus $\partial_y, \partial_z$ are orthogonal in $T_q S^2$,   with lengths $h(\partial_y, \partial_y) = a_3^2$, $h(\partial_z, \partial_z) = a_2^2$.

On the other hand, if $A_\sigma(x) = x \sigma$ is the action of $\sigma$ on $S^2$, we have
\begin{equation*}
dA_\sigma \partial_x = \partial_z, \qquad dA_\sigma \partial_y = \partial_y.
\end{equation*}
Thus $h_\sigma(dA_\sigma \partial_y, dA_\sigma \partial_y) =
  h_\sigma(\partial_y, \partial_y) = a_3^2 \ne a_1^2 = h_p(\partial_y,
  \partial_y)$, so that $h$ is not $G$-invariant.

Moreover, the Riemannian volume measure $\mu_h$ of $h$ is not $G$-invariant either.  One may compute that $|\det dA_\sigma| = |a_3/a_1| \not= 1$,  so that $A_\sigma$ is not volume preserving.  In particular, $\mu_h$ does not coincide up to scaling with the  invariant measure $\nu$ (i.e., Lebesgue surface area measure on the round sphere),   and so the volume doubling property for the metric measure space $(S^2, d_h, \nu)$ is not necessarily equivalent to that of $(S^2, d_h, \mu_h)$.  Moreover, one can verify that the densities $d\mu_h/d\nu$ have no uniform upper or lower bound as  the Riemannian metric $h$ varies over the class of metrics $h =  \pi_{\ast} g$, $g \in \mathcal{L}(\operatorname{SO}(3))$, so the uniform doubling of the class of metric measure spaces $(S^2, d_h, \mu_h)$ is not a priori equivalent to that of the class of $(S^2, d_h, \nu)$.
\end{exa}

\begin{rem}\label{hopf-non-invariant}
  One can also construct an example similar to Example \ref{sphere-non-invariant} in terms of the Hopf fibration, replacing $\mathrm{SO}(3)$ with its double cover $\mathrm{SU}(2)$ and working with the Hopf map $\pi : \mathrm{SU}(2) \to S^2$.
\end{rem}

It is elementary to show that every Riemannian submersion is a \emph{submetry} of the Riemannian manifolds viewed as metric spaces, which means that it maps each closed ball to a closed ball of the same radius; see for example \cite[Exercise 23, p.~151]{PetersenBook2ndEdition}.  Thus if we write $B(\cdot, r)$ for the closed balls of $(G,g)$, $(M,h)$ as appropriate, we have $\pi(B(\sigma, r)) = B(\pi(\sigma), r)$.

Suppose now that $M$ admits a $G$-invariant Radon measure $\nu$, which as noted above, need not agree with the Riemannian volume measure for the metric $h$.  We now study the relationship between the volumes of those balls with respect to the measures $\mu$ and $\nu$.

Our main tool is the following useful lemma taken from \cite[Lemme 1.1]{Guivarch1973a}, slightly adapted.
For the reader's convenience, we include the proof here. Note that the statement in \cite[Lemme 1.1]{Guivarch1973a} assumes a left action, whereas here we have a right action, and also includes a
set $Y$ which we take to be the singleton $\{p\}$.

\begin{lem}[ {\cite[Lemme 1.1]{Guivarch1973a}}]\label{guivarch}
Let $A, B$ be two compact subsets of $G$.  Then 
\begin{equation}\label{guivarch-eqn}
 \mu(A) \nu(\pi(B)) \leqslant \mu(AB) \nu(\pi(A^{-1})).
 \end{equation}
\end{lem}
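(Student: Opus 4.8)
The plan is to prove the inequality $\mu(A)\nu(\pi(B)) \leqslant \mu(AB)\nu(\pi(A^{-1}))$ by integrating a cleverly chosen nonnegative function over $G$ with respect to $\mu$ and estimating the resulting double integral in two ways, exploiting the bi-invariance of $\mu$, the $G$-invariance of $\nu$, and the compactness of $A$ and $B$.

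First I would set up the key function. For a fixed point, consider the characteristic functions $\mathbbm{1}_A$ and $\mathbbm{1}_B$ on $G$, and form the integral
\begin{equation*}
 I = \int_G \int_G \mathbbm{1}_A(\sigma)\, \mathbbm{1}_B(\sigma^{-1}\tau)\, \phi(\tau)\, d\mu(\sigma)\, d\mu(\tau),
\end{equation*}
where $\phi$ will be chosen to detect the orbit $\pi(B)$; concretely, since $\nu$ is a $G$-invariant measure on $M$, one has a disintegration (or averaging) relating $\mu$ on $G$ to $\nu$ on $M$ via the stabilizer $S_p$, and I would pick $\phi = \mathbbm{1}_{\pi^{-1}(E)}$ for a suitable Borel set $E \subseteq M$, so that $\int_G \phi\, d\mu$ reads off $\nu(E)$ up to the total mass of the stabilizer. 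The Fubini/Tonelli theorem applies since everything is nonnegative and the sets are compact. Evaluating the inner integral in $\sigma$ for fixed $\tau$: the constraint $\sigma \in A$ and $\sigma^{-1}\tau \in B$ forces $\tau \in AB$, and using left-invariance of $\mu$ the inner integral equals $\mu(A \cap \tau B^{-1})$, which is at most $\mu(A)$ and is zero unless $\tau \in AB$. This immediately gives the bound $I \leqslant \mu(A)\, \mu(\{\tau \in AB : \phi(\tau) \neq 0\})$, which after the averaging identity becomes $\mu(A)\, \nu(\pi(AB) \cap E)$.

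Next I would evaluate $I$ from the other side. Swapping the order and integrating first in $\tau$: for fixed $\sigma \in A$, substitute $\tau = \sigma\rho$ with $\rho \in B$ (bi-invariance of $\mu$ makes this legitimate), so the $\tau$-integral becomes $\int_B \phi(\sigma\rho)\, d\mu(\rho)$. Here is where the $G$-invariance of $\nu$ enters decisively: $\pi(\sigma\rho) = p\cdot(\sigma\rho)$, and as $\rho$ ranges over $B$ the point $\pi(\sigma\rho)$ ranges over $\pi(B)\cdot$(right translate), so by $G$-invariance the "amount" of $B$ that maps into $\phi\neq 0$ is the same after translating by $\sigma^{-1}$; choosing $E$ appropriately — essentially $E = \pi(B)$ translated back — one arranges that the $\tau$-integral is bounded below by a fixed positive multiple of $\nu(\pi(B))$ whenever $\sigma \in A$, specifically whenever the translate keeps us inside the target, which is controlled by $\pi(A^{-1})$. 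Integrating the resulting lower bound over $\sigma \in A$ and matching normalizations yields $I \geqslant c\,\mu(A)\,\nu(\pi(B))$ on the relevant part, and comparing with the upper bound — after the translates are accounted for — produces exactly $\mu(A)\nu(\pi(B)) \leqslant \mu(AB)\nu(\pi(A^{-1}))$.

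I expect the main obstacle to be bookkeeping the measures correctly: $\nu$ is an invariant measure on $M = G\backslash S_p$, not literally the pushforward of $\mu$, so one must be careful with the Weil-type integration formula relating $\int_G f\,d\mu$ to $\int_M \left(\int_{S_p} f(s\sigma)\,ds\right) d\nu$, and in particular the constant coming from the (finite, since compactness is in play locally, but in general) stabilizer normalization must cancel consistently between the two estimates. The geometric content — "translating a compact set by elements of another compact set only spreads its $\pi$-image by an amount measured by $\pi$ of the inverse set" — is intuitively clear, but making the choice of $E$ and $\phi$ precise so that both the upper and lower estimates land on the nose is the delicate part. The compactness hypotheses on $A$ and $B$ guarantee all the integrals are finite and all the sets $AB$, $\pi(B)$, $\pi(A^{-1})$ are compact hence $\nu$- and $\mu$-measurable with finite mass, so no integrability issues arise; the whole argument is a soft measure-theoretic manipulation once the invariance properties are correctly invoked.
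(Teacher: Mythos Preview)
Your proposal does not constitute a proof; it is a sketch with a genuine structural gap and at least one point where it cannot work in the stated generality.

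The main issue is that you set up the double integral on $G\times G$ with respect to $\mu\times\mu$ and then hope to read off quantities involving $\nu$ by a Weil-type formula $\mu(\pi^{-1}(E)) = c\,\nu(E)$. But this requires the stabilizer $S_p$ to have finite Haar measure, i.e.\ to be compact, and nothing in the lemma's hypotheses guarantees that. If $S_p$ is non-compact, every $\pi$-saturated set of positive $\nu$-measure has infinite $\mu$-measure, your integral $I$ is identically $+\infty$, and the argument carries no information. You flag this yourself as ``the delicate part,'' but it is not merely delicate---it is fatal in the generality required.

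Even setting that aside, the outline does not produce the claimed inequality. Your first estimate gives an \emph{upper} bound $I \leqslant \mu(A)\cdot(\text{something})$, and your second gives a \emph{lower} bound $I \geqslant c\,\mu(A)\,\nu(\pi(B))$. Chaining these cancels $\mu(A)$ entirely and never produces $\mu(AB)$ or $\nu(\pi(A^{-1}))$; the phrase ``after the translates are accounted for'' is doing all the work and none of it is shown. The choice of $E$ is never made precise.

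The paper's proof avoids both problems by working directly on $G\times M$ with the product measure $\mu\times\nu$. One defines the explicit set
\[
P = \{(\sigma,q)\in G\times M : \sigma\in AB,\ q\cdot\sigma^{-1}\in\pi(A^{-1})\},
\]
and computes $(\mu\times\nu)(P)$ two ways via Fubini: slicing vertically, each $\sigma$-section is a $G$-translate of $\pi(A^{-1})$, so $(\mu\times\nu)(P)=\mu(AB)\,\nu(\pi(A^{-1}))$ exactly; slicing horizontally, for each $q=\pi(\beta)\in\pi(B)$ the $q$-section contains $A\beta$, giving $(\mu\times\nu)(P)\geqslant\mu(A)\,\nu(\pi(B))$. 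No disintegration, no stabilizer normalization, no choice of auxiliary set $E$---the invariance of $\nu$ is used once, cleanly, on the vertical slices.
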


\begin{proof}
Consider the \emph{parallelogram} set $P \subset G \times M$ defined by
\begin{equation*}
 P = \{(\sigma, q): \sigma \in AB,  q \cdot \sigma^{-1} \in \pi(A^{-1})\}.
\end{equation*}
Note that for each $\sigma \in AB$, the vertical cross-section of $P$ at $\sigma$ is $P_\sigma = \pi(A^{-1}) \cdot \sigma$.  By the $G$-invariance of $\nu$, we have $\nu(P_\sigma) = \nu(\pi(A^{-1}))$. So by Fubini's theorem, the product measure of $P$ is given by
\begin{equation}\label{guiv1}
(\mu \times \nu)(P) = \mu(AB) \nu(\pi(A^{-1})).
\end{equation}

On the other hand, suppose $q \in \pi(B)$, so that there exists $\beta  \in B$ with $q = \pi(\beta)$. If $\sigma \in A \beta \subset AB$, so that $\sigma = \alpha \beta$ for some $\alpha \in A$, then
\begin{equation*}
q \cdot \sigma^{-1} = \pi(\beta \sigma^{-1}) = \pi(\alpha^{-1}) \in \pi(A^{-1}),
\end{equation*}
so that $(\sigma, q) \in P$.  Thus for $q \in \pi(B)$, the horizontal cross-section $P^q$ contains $A\beta$, so that $\mu(P^q) \geqslant \mu(A\beta) = \mu(A)$ by right invariance of $\mu$.  Using Fubini's theorem again, we have
\begin{equation}\label{guiv2}
 (\mu \times \nu)(P) \geqslant \mu(A) \nu(\pi(B)),
\end{equation}
and comparing \eqref{guiv1} with \eqref{guiv2} yields the conclusion.
\end{proof}

\begin{theo}\label{doubling-quotient}
The metric measure spaces $(M, d_h, \nu)$ and $(G, d_g, \mu)$ have volume doubling constants related by
\begin{equation}
 D(M, d_h, \nu) \leqslant D(G, d_g, \mu)^2.
\end{equation}
\end{theo}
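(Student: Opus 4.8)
The plan is to deduce the doubling inequality for $(M,d_h,\nu)$ directly from Guivarc'h's Lemma \ref{guivarch}, applied twice with carefully chosen compact sets, after reducing the doubling ratio on $M$ to ratios of $\mu$-volumes of balls on $G$ via the submetry property $\pi(B(\sigma,r)) = B(\pi(\sigma),r)$ already recorded above.

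\emph{Step 1: Reduce to balls centered at the origin.} Fix $q_0 = p = \pi(e) \in M$ and $r > 0$; it suffices to bound $\nu(B(p,2r))/\nu(B(p,r))$, since (as in the group case) the doubling constant can be computed from one basepoint — or, more carefully, since any ball $B(q,2r)$ on $M$ is the $\pi$-image of a ball $B(\sigma,2r)$ on $G$ with $\pi(\sigma)=q$, and one can translate by $L_{\sigma^{-1}}$ on $G$, which is a $g$-isometry, to move the relevant sets back to a neighborhood of $e$. (This is where left-invariance of $g$ enters; the metric $h$ on $M$ need not be $G$-invariant, as Example \ref{sphere-non-invariant} shows, so I must be a little careful, but the reduction is still valid because submetries push balls to balls and $L_\sigma$ is an isometry on the source.)

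\emph{Step 2: Apply Guivarc'h's Lemma with balls.} Set $A = B(e,r)$ and $B = B(e,r)$ in Lemma \ref{guivarch}. Then $AB \subseteq B(e,2r)$ by the triangle inequality (using left-invariance of $d_g$), and $A^{-1} = B(e,r)^{-1} = B(e,r)$ since $d_g$ is also right-invariant at $e$ in the sense that $d_g(e,\sigma^{-1}) = d_g(e,\sigma)$ (this uses that $g$ is left-invariant: $d_g(e,\sigma^{-1}) = d_g(\sigma, e) = d_g(e,\sigma)$ after left-translating by $\sigma$). Hence $\pi(A^{-1}) = \pi(B(e,r)) = B(p,r)$ and $\pi(B) = B(p,r)$, while $\pi(AB) \subseteq \pi(B(e,2r)) = B(p,2r)$, so that $\nu(\pi(AB)) \le \nu(B(p,2r))$. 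The inequality \eqref{guivarch-eqn} then gives
\begin{equation*}
 \mu(B(e,r))\, \nu(B(p,r)) \leqslant \mu(B(e,2r))\, \nu(B(p,r)),
\end{equation*}
which is vacuous as stated. The fix is to run the lemma in the \emph{other direction}: take $A = B(e,r)$, $B = B(e,2r)$ to get a lower bound on $\nu(B(p,2r))$ in terms of $\mu$-volumes, and separately take $A = B(e,2r)$, $B = B(e,r)$ (or $A=B=B(e,r)$) to get an upper bound on $\nu(B(p,r))$. Concretely: with $A=B(e,r), B=B(e,2r)$, $AB \subseteq B(e,3r)$ and $\pi(B) = B(p,2r)$, so
\begin{equation*}
 \mu(B(e,r))\,\nu(B(p,2r)) \leqslant \mu(B(e,3r))\,\nu(B(p,r));
\end{equation*}
with $A=B=B(e,r)$, $AB\subseteq B(e,2r)$ and $\pi(B)=B(p,r)$, so
\begin{equation*}
 \mu(B(e,r))\,\nu(B(p,r)) \leqslant \mu(B(e,2r))\,\nu(B(p,r)),
\end{equation*}
again vacuous — so the genuinely useful pairing is to bound $\nu(B(p,r))$ from below: with $A=B(e,2r), B=B(e,r)$, $A^{-1}=B(e,2r)$, $\pi(A^{-1})=B(p,2r)$, $\pi(B)=B(p,r)$, and $AB\subseteq B(e,3r)$, giving $\mu(B(e,2r))\,\nu(B(p,r)) \le \mu(B(e,3r))\,\nu(B(p,2r))$, i.e. $\nu(B(p,2r)) \ge \frac{\mu(B(e,2r))}{\mu(B(e,3r))}\nu(B(p,r))$ — still the wrong direction. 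The clean route is: from the first displayed useful inequality, $\nu(B(p,2r)) \le \frac{\mu(B(e,3r))}{\mu(B(e,r))}\nu(B(p,r))$, and then control $\mu(B(e,3r))/\mu(B(e,r)) \le \mu(B(e,4r))/\mu(B(e,r)) \le D(G,d_g,\mu)^2$ by two applications of doubling on $G$. Thus
\begin{equation*}
 \frac{\nu(B(p,2r))}{\nu(B(p,r))} \;\leqslant\; \frac{\mu(B(e,4r))}{\mu(B(e,r))} \;\leqslant\; D(G,d_g,\mu)^2,
\end{equation*}
which, combined with Step 1, yields $D(M,d_h,\nu) \le D(G,d_g,\mu)^2$.

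\emph{Main obstacle.} The one subtle point is Step 1 — ensuring the doubling constant on $M$ really is attained (or approximated) by balls based at the fixed point $p$, given that $h$ is \emph{not} $G$-invariant in general. The resolution is that for an arbitrary center $q = \pi(\sigma)$, one has $B(q,2r) = \pi(B(\sigma,2r)) = \pi(L_\sigma B(e,2r))$ and $B(q,r) = \pi(L_\sigma B(e,r))$ by the submetry property together with $L_\sigma$ being a $g$-isometry; applying Lemma \ref{guivarch} with $A = L_\sigma B(e,r)$, $B = L_\sigma B(e,2r)$ (noting $(L_\sigma B(e,r))^{-1} = B(e,r) L_{\sigma^{-1}} = R_{\sigma^{-1}} B(e,r)$, whose $\pi$-image is $\pi(B(e,r)) \cdot \sigma^{-1}$, and $\nu$ is $G$-invariant so $\nu$ of it equals $\nu(B(p,r))$) reduces the general-center estimate to $\mu$-volumes of balls on $G$ exactly as above, since $\mu$ is bi-invariant. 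The remaining work — translating the chain of inequalities into the final bound — is routine, as is bookkeeping the factor of $D(G,d_g,\mu)^2$ from the two doublings needed to pass from radius $r$ to radius $4r$ on $G$.
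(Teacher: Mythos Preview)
Your proposal is correct and, after the detours, lands on exactly the paper's argument: apply Lemma~\ref{guivarch} with $A=B(e,r)$, $B=B(e,2r)$, use $A^{-1}=A$ and $AB\subseteq B(e,3r)$, and then bound $\mu(B(e,3r))/\mu(B(e,r))\leqslant D(G,d_g,\mu)^2$. The only difference is your handling of the basepoint: the paper sidesteps your ``Main obstacle'' entirely by invoking Lemma~\ref{pushforward-metric}, which already shows that the pushforward metric $h$ is independent of the choice of $p$, so one may simply rerun the identical argument with an arbitrary $q\in M$ as the new basepoint---no translated sets or bi-invariance bookkeeping needed.
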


\begin{proof}
We apply Lemma~\ref{guivarch} with $A = B(e, r)$, $B =  B(e, 2r)$.  Since the distance $d_g$ is left-invariant, we have  $A^{-1} = A$ and $AB = B(e, 3r)$. And since $\pi$ is a submetry, we have $\pi(A^{-1}) = \pi(A) = B(p,  r)$, $\pi(B) = B(p, 2r)$.  So \eqref{guivarch-eqn} reads
  \begin{equation*}
 \mu(B(e, r)) \nu(B(p, 2r)) \leqslant \mu(B(e, 3r)) \nu(B(p, r))
  \end{equation*}
  which rearranges to give
  \begin{equation*}
 \frac{\nu(B(p,2r))}{\nu(B(p,r))} \leqslant \frac{\mu(B(e,
   3r))}{\mu(B(e, r))} \leqslant D(G, d_g, \mu)^2
  \end{equation*}
  since $3r \leqslant 4r$.  Taking the supremum over $r$ and noting that $p
  \in M$ was arbitrary, we have the result.
\end{proof}

\begin{cor}\label{uniformly-homogeneous}
If $G$ is uniformly doubling, then so is the family of metric  measure spaces $\left\{(M, d_{\pi_{\ast} g}, \nu), g \text{ is a left-invariant metric on } G \right\}$.
\end{cor}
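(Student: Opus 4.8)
The plan is to read this straight off Theorem~\ref{doubling-quotient}. Fix an arbitrary left-invariant Riemannian metric $g$ on $G$ and let $h = \pi_\ast g$ be its push-forward to $M$, as provided by Lemma~\ref{pushforward-metric}. Theorem~\ref{doubling-quotient} already gives $D(M, d_h, \nu) \leqslant D(G, d_g, \mu)^2$, where $\mu$ is the fixed bi-invariant Haar measure on $G$. So the only thing left to do is to bound $D(G, d_g, \mu)$ by a quantity independent of $g$, which is exactly what the hypothesis that $G$ is uniformly doubling should supply — modulo checking that the two slightly different formulations of ``doubling constant of $(G,g)$'' agree.

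Next I would reconcile the measure appearing here with the one used to define uniform doubling in Section~\ref{s.intro}. There, $D_g$ is defined via the Riemannian volume measure $\mu_g$ of $g$, whereas $D(G,d_g,\mu)$ uses the fixed bi-invariant Haar measure $\mu$. But $\mu_g$ is a left-invariant Riemannian volume, hence a left (indeed, since $G$ is unimodular, bi-invariant) Haar measure, so $\mu_g = c_g\,\mu$ for some constant $c_g > 0$. The doubling ratio $\mu(B(x,2r))/\mu(B(x,r))$ is unchanged under multiplying the measure by a positive constant, and both $d_g$ and $\mu$ are left-invariant, so $D(G,d_g,\mu) = \sup_{r>0}\mu_g(B_g(e,2r))/\mu_g(B_g(e,r)) = D_g$. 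The assumption that $G$ is uniformly doubling says precisely that $D_g \leqslant D(G)$ for every $g \in \mathfrak{L}(G)$, with $D(G)$ independent of $g$.

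Combining these, $D(M, d_h, \nu) = D(M, d_{\pi_\ast g}, \nu) \leqslant D_g^2 \leqslant D(G)^2$ for every left-invariant metric $g$ on $G$. Taking the supremum over all such $g$ shows that the family $\{(M, d_{\pi_\ast g}, \nu) : g \in \mathfrak{L}(G)\}$ is uniformly doubling, with constant at most $D(G)^2$. I do not expect any real obstacle: all of the substance is already contained in Theorem~\ref{doubling-quotient} (via Guivarc'h's Lemma~\ref{guivarch} and the fact that a Riemannian submersion is a submetry, so that $\pi$ maps balls onto balls of the same radius). The only point needing any care is the bookkeeping of which measure is in play and the scale-invariance of the doubling constant under rescaling that measure, both of which are routine.
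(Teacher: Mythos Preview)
Your proposal is correct and matches the paper's approach: the paper states this corollary without proof, treating it as an immediate consequence of Theorem~\ref{doubling-quotient}, and your argument spells out exactly that deduction together with the routine observation that $D(G,d_g,\mu)=D_g$ since $\mu_g$ is a constant multiple of $\mu$.
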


We emphasize that since $\nu$ is not necessarily the volume measure of $h$ (even up to a constant), the metric measure space $(M, d_h, \nu)$ in general is not the metric measure space structure associated with the Riemannian structure on $(M, h)$.  However, we now specialize to the case of a quotient group, where the situation is nicer.  Let $S$ be a (closed) normal subgroup  of $G$ (unless otherwise stated, we assume subgroups are closed), and let $M = H = G / S$ be the quotient group. We will use $e$ to denote either the identity element of $G$ or of $H$, where it should be clear from context which is meant.  We apply the above results to the right action of $G$ on $H$ by multiplication, with $\pi : G \to H$ the quotient map, so that the point $p$ in the previous development is taken as the identity $e$ of $H$.

\begin{lem}\label{quotient-lift}
Given a left-invariant Riemannian metric $g$ on $G$, define a Riemannian metric $h = \pi_{\ast} g$ on $H$ as in Lemma \ref{pushforward-metric}. Then $h$ is left-invariant.  Moreover, every left-invariant Riemannian metric on $H$ arises in this way.
\end{lem}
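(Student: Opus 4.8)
The plan is to treat the two assertions in turn: first, that $h=\pi_{\ast}g$ is left-invariant on $H$; second, that every left-invariant Riemannian metric on $H$ is obtained this way. Throughout I would write $L_\tau$ for left translation (in $G$ or in $H$ according to context) and use repeatedly that $\pi$ is a group homomorphism, so that $\pi\circ L_{\tilde k}=L_{\pi(\tilde k)}\circ\pi$ for every $\tilde k\in G$. I would also note at the outset that, $S$ being closed, $\pi:G\to G/S=H$ is a smooth submersion, so that Lemma~\ref{pushforward-metric} genuinely applies and $h=\pi_{\ast}g$ is well defined and unique.

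For the first assertion, fix $k\in H$ and choose $\tilde k\in G$ with $\pi(\tilde k)=k$ (possible since $\pi$ is onto). As $g$ is left-invariant, $L_{\tilde k}$ is an isometry of $(G,g)$, and $\pi:(G,g)\to(H,h)$ is a Riemannian submersion by construction; hence $\pi\circ L_{\tilde k}=L_k\circ\pi$ is again a Riemannian submersion from $(G,g)$ onto $(H,h)$ (an isometry followed by a Riemannian submersion is a Riemannian submersion). Equivalently, this is precisely the ``independence of the base point'' statement already established inside the proof of Lemma~\ref{pushforward-metric}, applied to $p'=k$. Now at any $\sigma\in G$ the maps $\pi$ and $L_k\circ\pi$ have the same fibres, since $\ker d(L_k\circ\pi)_\sigma=\ker(dL_k\circ d\pi_\sigma)=\ker d\pi_\sigma$; hence the same horizontal space $\mathrm{Hor}_\sigma$ (the $g$-orthogonal complement of that kernel). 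Both $d\pi_\sigma$ and $d(L_k\circ\pi)_\sigma$ restrict to linear isometries of $(\mathrm{Hor}_\sigma,g_\sigma)$ onto $(T_{\pi(\sigma)}H,h)$ and $(T_{k\pi(\sigma)}H,h)$ respectively, so $dL_k=d(L_k\circ\pi)_\sigma|_{\mathrm{Hor}_\sigma}\circ\bigl(d\pi_\sigma|_{\mathrm{Hor}_\sigma}\bigr)^{-1}$ is a linear isometry $T_{\pi(\sigma)}H\to T_{k\pi(\sigma)}H$. Since $\pi$ is surjective, every point of $H$ arises as $\pi(\sigma)$, so $L_k$ is an isometry of $(H,h)$; as $k$ was arbitrary, $h$ is left-invariant.

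For the second assertion, let $h$ be an arbitrary left-invariant Riemannian metric on $H$, and construct a left-invariant $g$ on $G$ with $\pi_{\ast}g=h$ by prescribing an inner product on $\mathfrak{g}=T_eG$. Write $\mathfrak{s}=\ker d\pi_e\subset\mathfrak{g}$ (the Lie algebra of $S$), pick any vector-space complement $\mathfrak{m}$ with $\mathfrak{g}=\mathfrak{s}\oplus\mathfrak{m}$, so that $d\pi_e|_{\mathfrak{m}}:\mathfrak{m}\to T_eH$ is a linear isomorphism. Define $g_e$ by declaring $\mathfrak{s}\perp\mathfrak{m}$, putting an arbitrary inner product on $\mathfrak{s}$, and setting $g_e(Y_1,Y_2):=h_e(d\pi_eY_1,d\pi_eY_2)$ for $Y_1,Y_2\in\mathfrak{m}$; let $g$ be the left-invariant metric on $G$ it generates. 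Then $\mathrm{Ver}_e=\mathfrak{s}$, $\mathrm{Hor}_e=\mathfrak{m}$, and $d\pi_e:\mathrm{Hor}_e\to T_eH$ is an isometry by design. To propagate this to an arbitrary $\sigma\in G$, differentiate $\pi\circ L_\sigma=L_{\pi(\sigma)}\circ\pi$ to get $d\pi_\sigma\circ dL_\sigma=dL_{\pi(\sigma)}\circ d\pi_e$; since $dL_{\pi(\sigma)}$ is injective, $dL_\sigma$ maps $\mathfrak{s}$ onto $\ker d\pi_\sigma=\mathrm{Ver}_\sigma$, and being a $g$-isometry it then maps $\mathfrak{m}=\mathfrak{s}^{\perp_g}$ onto $\mathrm{Hor}_\sigma$; combining with the fact that $dL_{\pi(\sigma)}$ is an $h$-isometry, one reads off that $d\pi_\sigma:\mathrm{Hor}_\sigma\to T_{\pi(\sigma)}H$ is an isometry. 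Thus $\pi:(G,g)\to(H,h)$ is a Riemannian submersion, and by the uniqueness clause of Lemma~\ref{pushforward-metric} we conclude $\pi_{\ast}g=h$.

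The genuinely routine parts are the diagram chases built on $\pi\circ L_\tau=L_{\pi(\tau)}\circ\pi$. The one step deserving real (if easy) attention is the last one: the submersion condition for the metric constructed in the second part is arranged by hand only at the identity, and one must check it then holds at every $\sigma\in G$ — which is exactly where the mutual left-invariance of $g$ on $G$ and $h$ on $H$ is used. I do not anticipate any serious obstacle beyond bookkeeping.
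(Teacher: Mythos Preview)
Your proof is correct and follows essentially the same approach as the paper: both exploit the identity $\pi\circ L_{\tilde k}=L_{\pi(\tilde k)}\circ\pi$ for left-invariance, and both construct $g_e$ so that $d\pi_e$ is a partial isometry for surjectivity. The one organizational difference is that the paper, having already established in the first part that $\pi_\ast g$ is left-invariant, dispenses with your point-by-point verification at $\sigma$ and simply observes that $\pi_\ast g$ and $h$ are two left-invariant metrics agreeing at $e$, hence equal---a small efficiency you might adopt.
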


\begin{proof}
Let $q = \pi(\sigma) \in H$.  Suppose $v \in T_e H$; we will show that $h_q(dL_q v, dL_q v) = h_e(v,v)$.  Let $w \in \operatorname{Hor}_e$ be the unique horizontal tangent vector in $T_e G$ with $d\pi_e w = v$.  We claim that $dL_\sigma w \in \operatorname{Hor}_\sigma$.  Indeed, suppose that $u \in \ker d\pi_\sigma$.  Since $\pi$ is a group homomorphism, we have $\pi \circ L_{\sigma^{-1}} = L_{q^{-1}} \circ \pi$ and thus
\begin{equation*}
d \pi_e dL_{\sigma^{-1}} u = dL_{q^{-1}} d\pi_{\sigma} u = 0,
\end{equation*}
so $dL_{\sigma^{-1}} u \in \ker d\pi_e$.  As such,
\begin{equation*}
g_\sigma(dL_\sigma w, u) = g_e(w, dL_{\sigma^{-1}} u) = 0
\end{equation*}
and so $dL_\sigma w \in \operatorname{Hor}_\sigma$.  Moreover, $d\pi_\sigma dL_\sigma w = dL_q d\pi_e w = dL_q v$.  Now since $\pi$ is a Riemannian submersion, we have
\begin{equation*}
h_q(dL_q v, dL_q v) = g_\sigma(dL_\sigma w, dL_\sigma w) = g_e(w,
 w) = h_e(v, v).
\end{equation*}

Now suppose we are given a left-invariant Riemannian metric $h$ on $H$.  Since $d\pi_e$ is surjective, we can choose an inner product $g_e$ on $T_e G$ for which $d\pi_e$ is a partial isometry. This choice is unique if and only if $\dim H = \dim G$. Extend this inner product $g_e$ to a left-invariant Riemannian metric on $G$, i.e. by setting $g_\sigma(w, w) := g_e(dL_{\sigma^{-1}} w, dL_{\sigma^{-1}} w)$. The induced Riemannian metric $\pi_{\ast} g$ on $H$  is left-invariant as noted above, and agrees with $h_e$ on $T_e H$ by construction, so $h = \pi_{\ast} g$.
\end{proof}

In this setting, the Riemannian volume measure for the left-invariant metric $h$ is (up to scaling) the left Haar measure $m_L$ on $H$, whereas the $G$-invariant measure $\nu$ is the right Haar measure $m_R$.  So the foregoing results would describe the volume doubling property for the metric measure space $(H, d_h, m_R)$.  However, when $d_h$ is left (or right) invariant, the doubling constants with respect to $m_R$ and $m_L$ coincide.  Since $m_L$ and $d_H$ are both left-invariant, it suffices to consider the doubling constant of $m_L$ at the identity; but the invariance of the distance implies that each ball $B(e, r)$ is symmetric, that is, $B(e, r)^{-1} =B(e,r)$ and so $m_L(B(e,r)) = m_L(B(e,r)^{-1}) = m_R(B(e,r))$.  In any case, for the purposes of the present paper, we only need to consider $H$ that are quotients of the semisimple Lie group $\operatorname{SU}(2) \times \mathbb{R}^n$, which are themselves semisimple and hence unimodular, so the issue does not arise.

Thus, for quotient groups we can reformulate  Corollary \ref{uniformly-homogeneous} as follows.

\begin{cor}\label{uniformly-quotient}
Suppose $\mathcal{L}'(G) \subset \mathcal{L}(G)$,
  $\mathcal{L}'(H) \subset \mathcal{L}(H)$ are sets of
  left-invariant Riemannian metrics on $G,H$ respectively, such that
  for every $h \in \mathcal{L}'(H)$ there exists $g \in
  \mathcal{L}'(G)$ with $h = \pi_{\ast} g$.  Then if $\mathcal{L}'(G)$ is
  uniformly doubling, so is $\mathcal{L}'(H)$.
\end{cor}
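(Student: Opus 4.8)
The plan is to deduce Corollary~\ref{uniformly-quotient} from Corollary~\ref{uniformly-homogeneous} together with the dictionary, established earlier in this section, between the homogeneous-space setup and the quotient-group setup. Concretely, I take $G$ acting on $M = H = G/S$ on the right by multiplication, with $\pi : G \to H$ the quotient map and base point $p = e \in H$. Lemma~\ref{quotient-lift} then tells us that for this action the push-forward $h = \pi_\ast g$ of a left-invariant metric $g$ on $G$ is left-invariant on $H$, and that every left-invariant metric on $H$ arises this way; moreover, as observed in the paragraph following that lemma, the $G$-invariant measure $\nu$ in the homogeneous-space development is the right Haar measure $m_R$ on $H$, while the Riemannian volume measure $\mu_h$ of $h$ is the left Haar measure $m_L$, and these give the same doubling constant because each ball $B(e,r)$ is symmetric in a group with left-invariant distance (and, in our application, $H$ is in any case unimodular).

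With this dictionary in place, the argument is short. First I would fix $h \in \mathcal{L}'(H)$ and, using the hypothesis, choose $g \in \mathcal{L}'(G)$ with $h = \pi_\ast g$. Since $\mathcal{L}'(G)$ is uniformly doubling, there is a finite constant $D$, depending only on the family $\mathcal{L}'(G)$, with $D(G, d_g, \mu) \leqslant D$ for all $g \in \mathcal{L}'(G)$, where $\mu$ is a fixed bi-invariant Haar measure on $G$. Applying Theorem~\ref{doubling-quotient} to this particular $g$ gives
\begin{equation*}
 D(H, d_h, m_R) = D(M, d_h, \nu) \leqslant D(G, d_g, \mu)^2 \leqslant D^2.
\end{equation*}
Then I would invoke the remark on unimodularity/symmetry of balls to replace $m_R$ by $m_L$, i.e. $D(H, d_h, m_L) = D(H, d_h, m_R) \leqslant D^2$; but $m_L$ is (a constant multiple of) the Riemannian volume measure of $h$, so this is exactly the statement that the doubling constant $D_h$ of the left-invariant Riemannian metric $h$ is at most $D^2$. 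Since $h \in \mathcal{L}'(H)$ was arbitrary and the bound $D^2$ does not depend on $h$, the family $\mathcal{L}'(H)$ is uniformly doubling.

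There is no real obstacle here: the content has already been extracted in Theorem~\ref{doubling-quotient}, Lemma~\ref{quotient-lift}, and the intervening discussion, and Corollary~\ref{uniformly-quotient} is essentially a bookkeeping restatement of Corollary~\ref{uniformly-homogeneous} once one has identified the push-forward metric with a generic left-invariant metric on $H$ and reconciled the two relevant measures. The only point requiring a word of care is the measure-matching step — making sure that the $\nu$ appearing in Theorem~\ref{doubling-quotient} can be traded for the Riemannian volume measure of $h$ — and this is exactly what the symmetry-of-balls observation (or, more simply, unimodularity of the groups actually used in the paper) handles. Accordingly I would present the proof in three sentences: set up the action and cite Lemma~\ref{quotient-lift}; apply Theorem~\ref{doubling-quotient} to the lift $g$ of a given $h$; and note that $m_R$, $m_L$, and $\mu_h$ all yield the same doubling constant, so the uniform bound $D^2$ passes to all of $\mathcal{L}'(H)$.
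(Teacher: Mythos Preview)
Your proposal is correct and matches the paper's approach: the paper treats this corollary as an immediate reformulation of Corollary~\ref{uniformly-homogeneous} (equivalently, Theorem~\ref{doubling-quotient}) in the quotient-group setting, invoking exactly the measure-reconciliation observation (symmetry of balls, or unimodularity) made in the paragraph preceding the corollary. There is no separate proof in the paper beyond that discussion, and your three-sentence summary is precisely the intended argument.
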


In particular, Lemma \ref{quotient-lift} shows the hypothesis holds when $\mathcal{L}^{\prime}(G) = \mathcal{L}(G)$ and $\mathcal{L}^{\prime}(H) = \mathcal{L}(H)$, and therefore we have the following statement. 

\begin{cor}\label{uniformly-quotient-all}
If $G$ is uniformly doubling, then so is $H$.
\end{cor}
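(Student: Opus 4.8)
The proof of Corollary \ref{uniformly-quotient-all} is essentially immediate given the machinery already assembled, so the plan is simply to chain together the preceding results. First I would invoke Lemma \ref{quotient-lift}: given any left-invariant Riemannian metric $h \in \mathcal{L}(H)$, the lemma produces a left-invariant Riemannian metric $g \in \mathcal{L}(G)$ with $h = \pi_\ast g$. This is precisely the lifting hypothesis of Corollary \ref{uniformly-quotient} with the choices $\mathcal{L}'(G) = \mathcal{L}(G)$ and $\mathcal{L}'(H) = \mathcal{L}(H)$.

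Next I would apply Corollary \ref{uniformly-quotient} with these choices. Its hypothesis that $\mathcal{L}'(G) = \mathcal{L}(G)$ is uniformly doubling is exactly the assumption that $G$ is uniformly doubling. The conclusion is that $\mathcal{L}'(H) = \mathcal{L}(H)$ is uniformly doubling, which is the statement that $H$ is uniformly doubling. To make the quantitative dependence transparent, one can trace the chain back to Theorem \ref{doubling-quotient}: for each $h = \pi_\ast g$ one has $D(H, d_h, m_L) = D(H, d_h, m_R) \leqslant D(G, d_g, \mu)^2 \leqslant D(G)^2$, using that $m_L$ and $m_R$ give the same doubling constant when $d_h$ is left-invariant (as discussed before Corollary \ref{uniformly-quotient}), and taking the supremum over $h \in \mathcal{L}(H)$ gives $D(H) \leqslant D(G)^2 < \infty$.

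There is essentially no obstacle here: the real content lives in the earlier results — Lemma \ref{guivarch} (Guivarc'h's parallelogram estimate), the submetry property of Riemannian submersions, Theorem \ref{doubling-quotient}, and the lifting Lemma \ref{quotient-lift}. The only point requiring a word of care is unimodularity / the coincidence of left and right Haar measure doubling constants, but this was already settled in the discussion preceding Corollary \ref{uniformly-quotient}, and in any case is automatic for the semisimple groups $\operatorname{SU}(2) \times \mathbb{R}^n$ and their quotients that are the actual objects of interest in this paper. So the "proof" is a two-line deduction, and I would write it as such.

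\begin{proof}
Let $h \in \mathcal{L}(H)$ be any left-invariant Riemannian metric on $H$. By Lemma \ref{quotient-lift}, there is a left-invariant Riemannian metric $g$ on $G$ with $h = \pi_\ast g$. Thus the hypothesis of Corollary \ref{uniformly-quotient} is satisfied with $\mathcal{L}'(G) = \mathcal{L}(G)$ and $\mathcal{L}'(H) = \mathcal{L}(H)$, and since $G$ is assumed uniformly doubling, $\mathcal{L}(G)$ is uniformly doubling; hence $\mathcal{L}(H)$ is uniformly doubling, i.e., $H$ is uniformly doubling. Quantitatively, combining Theorem \ref{doubling-quotient} with the fact (noted above) that the doubling constants of $m_L$ and $m_R$ agree for a left-invariant distance, we obtain $D(H) \leqslant D(G)^2$.
\end{proof}
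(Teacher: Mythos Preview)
Your proof is correct and is exactly the paper's approach: the paper simply notes that Lemma~\ref{quotient-lift} verifies the hypothesis of Corollary~\ref{uniformly-quotient} with $\mathcal{L}'(G)=\mathcal{L}(G)$ and $\mathcal{L}'(H)=\mathcal{L}(H)$, and Corollary~\ref{uniformly-quotient-all} follows immediately. Your additional remark extracting the explicit bound $D(H)\leqslant D(G)^2$ from Theorem~\ref{doubling-quotient} is a nice touch but not needed for the bare statement.
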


To conclude this section, we recall an elementary fact about volume doubling for product metrics.

\begin{pro}\label{doubling-product}
Let $(M_1, g_1), (M_2, g_2)$ be Riemannian manifolds, equipped with  the corresponding Riemannian volume measures $\mu_1, \mu_2$, and having volume doubling constants $D_1, D_2$ respectively.  Let $(M,g) = (M_1 \times M_2, g_1 \oplus g_2)$ be the product manifold equipped with the Riemannian product metric, and with $\mu$ its Riemannian volume  measure, which is the product measure $\mu = \mu_1 \otimes \mu_2$.  Then the volume doubling constant $D$ of $(M,g)$ is  bounded above by $D_1^2 D_2^2$.
\end{pro}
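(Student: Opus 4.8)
The plan is to reduce the doubling constant of a product ball to a product of doubling constants by first showing that a product ball is sandwiched between two product sets. Specifically, for $(M,g) = (M_1 \times M_2, g_1 \oplus g_2)$ with the product metric, the Riemannian distance satisfies $d_g((x_1,x_2),(y_1,y_2))^2 = d_{g_1}(x_1,y_1)^2 + d_{g_2}(x_2,y_2)^2$. From this one gets the elementary inclusions
\begin{equation*}
B_{g_1}(x_1, r/\sqrt{2}) \times B_{g_2}(x_2, r/\sqrt{2}) \subseteq B_g((x_1,x_2), r) \subseteq B_{g_1}(x_1, r) \times B_{g_2}(x_2, r),
\end{equation*}
valid for every $r > 0$. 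The first inclusion holds because if $d_{g_i}(x_i,y_i) < r/\sqrt 2$ for $i=1,2$ then $d_g < r$; the second because each coordinate distance is bounded by the product distance.

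Next I would use these inclusions together with the product structure of the Riemannian volume measure, $\mu = \mu_1 \otimes \mu_2$, to bound $\mu(B_g((x_1,x_2),2r))$ from above and $\mu(B_g((x_1,x_2),r))$ from below:
\begin{equation*}
\mu(B_g((x_1,x_2),2r)) \leqslant \mu_1(B_{g_1}(x_1,2r)) \, \mu_2(B_{g_2}(x_2,2r)),
\end{equation*}
\begin{equation*}
\mu(B_g((x_1,x_2),r)) \geqslant \mu_1(B_{g_1}(x_1, r/\sqrt 2)) \, \mu_2(B_{g_2}(x_2, r/\sqrt 2)).
\end{equation*}
Taking the ratio, the problem factors into two single-factor ratios $\mu_i(B_{g_i}(x_i, 2r)) / \mu_i(B_{g_i}(x_i, r/\sqrt 2))$. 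Now $2r = 2^{3/2} \cdot (r/\sqrt 2)$, and $2^{3/2} < 2^2 = 4$, so monotonicity of $s \mapsto \mu_i(B_{g_i}(x_i,s))$ gives $\mu_i(B_{g_i}(x_i,2r)) \leqslant \mu_i(B_{g_i}(x_i, 4 \cdot (r/\sqrt 2)))$, which by two applications of the doubling inequality for $(M_i,g_i)$ is at most $D_i^2 \, \mu_i(B_{g_i}(x_i, r/\sqrt 2))$. Multiplying the two factors yields the bound $D_1^2 D_2^2$ for the ratio $\mu(B_g(z,2r))/\mu(B_g(z,r))$ at every $z \in M$ and $r > 0$, hence $D \leqslant D_1^2 D_2^2$.

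There is no real obstacle here; the only mild subtlety is the bookkeeping with the factor $\sqrt 2$ coming from the Pythagorean relation between the product distance and the factor distances, which is what forces the squared constants $D_i^2$ rather than $D_i$ (one application of doubling would cover a factor of $2$, but we need to cover a factor of $2^{3/2}$, so two applications are needed; equivalently $\lceil \log_2 2^{3/2} \rceil = 2$). One should also remark, for completeness, that the distance identity $d_g^2 = d_{g_1}^2 + d_{g_2}^2$ is standard: any path in $M$ projects to paths in $M_1, M_2$, the squared speed splits as a sum, and minimizing geodesics in the product are exactly pairs of minimizing geodesics in the factors; this is where the product structure of $g$ is used in an essential way, and is precisely what fails for a non-product left-invariant metric, motivating the discussion in Remark~\ref{non-product}.
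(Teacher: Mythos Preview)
Your proof is correct and follows essentially the same approach as the paper: both use the Pythagorean distance identity to sandwich the product ball between products of factor balls at radii $r/\sqrt{2}$ and $2r$, then apply doubling twice in each factor to absorb the extra factor of $2^{3/2}$. The paper's write-up is slightly more compact (it chains the inclusions as $B_1(r/\sqrt{2})\times B_2(r/\sqrt{2}) \subset B(r) \subset B(2r) \subset B_1(2r)\times B_2(2r)$), but the argument is identical.
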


\begin{proof}
In this setting, it is easy to show that the Riemannian distances $d_1, d_2, d$ obey the Pythagorean identity $d((p_1, p_2), (q_1, q_2))^2 = d_1(p_1, q_1)^2 + d_2(p_2, q_2)^2$.  From this it follows that for any point $p = (p_1, p_2) \in M$, we have for the corresponding balls $B_1, B_2, B$ that
\begin{equation*}
 B_1(p_1, r/\sqrt{2}) \times B_2(p_2, r/\sqrt{2}) \subset B(p, r)
 \subset B(p, 2r) \subset B_1(p_1, 2r) \times B_2(p_2, 2r).
 \end{equation*}
Taking the volumes and rearranging, we have
  \begin{equation*}
 \frac{\mu(B(p,2r))}{\mu(B(p,r))} \leqslant \frac{\mu_1(B_1(p_1,
   2r))}{\mu_1(B_1(p_1, r/\sqrt{2}))} \cdot
   \frac{\mu_2(B_2(p_2,
   2r))}{\mu_2(B_2(p_2, r/\sqrt{2}))} \leqslant D_1^2 D_2^2.
  \end{equation*}
\end{proof}

\begin{rem}\label{non-product}
It is important to note that the above argument applies \emph{only} to the product metric $g = g_1 \oplus g_2$.  As such, in the setting of Lie groups, if $G_1, G_2$ are each uniformly doubling, it does not automatically follow that $G = G_1 \times G_2$ is uniformly doubling, since $G$ admits left-invariant metrics that are not product metrics.  It is true that given $g \in \mathcal{L}(G)$ and letting
$g_1, g_2$ be its restrictions to $G_1, G_2$ respectively, there is a minimum angle $\theta_g >0$ between
the tangent spaces of the two factors, given by
\begin{equation*}
  \cos \theta_g = \max\{\langle v_1, v_2 \rangle_{g} : v_i \in T_{p_i} G_i, g_i(v_i, v_i)=1\}
\end{equation*}
which by left-invariance is independent of the point $(p_1, p_2) \in G$.  We then have the bound $d_i(p_i, q_i) \leqslant \frac{d((p_1, p_2), (q_1, q_2))}{\sin \theta_g}$, and from this and the triangle inequality we obtain
\begin{equation*}
B_1(r/2) \times B_2(r/2) \subset B(r) \subset B(2r) \subset B_1\left(\frac{2r}{\sin\theta_g}\right)
 \times B_2\left(\frac{2r}{\sin\theta_g}\right)
\end{equation*}
yielding
\begin{equation*}
D_g \leqslant (D_1 D_2)^{\lceil 2 -\log_2(\sin \theta_g)\rceil}.
\end{equation*}
But since $\theta_g$ may be arbitrarily close to 0, this argument cannot yield a finite uniform bound on the doubling constant $D_g$ over all left-invariant metrics $g$ on $G$.
\end{rem}

\section{Decoupled metrics}\label{decoupled}

In this section we perform a series of reductions that leaves us needing only to prove uniform doubling for the specific group $\operatorname{SU}(2) \times \mathbb{R}^3$.   Moreover, we show that we need not even consider all possible left-invariant Riemannian metrics on this group, but can restrict consideration to a smaller class of \emph{decoupled} metrics, as defined in Definition~\ref{decoupled-def}, whose structure constants are of a particularly simple form.  Theorem~\ref{doubling-reduction} gives the precise statement of the reduction. As in \cite{EldredgeGordinaSaloff-Coste2018}, the starting point is the notion of a \emph{standard Milnor basis} for the Lie algebra $\mathfrak{su}(2)$, originated in \cite{Milnor1976}. We follow similar, but not identical, terminology and notation to
\cite[Sections 2.2--2.3]{EldredgeGordinaSaloff-Coste2018}.

Recall that by the classical paper \cite{Milnor1976} left-invariant metrics on $G$ are in one-to-one correspondence of inner products on the Lie algebra $\mathfrak{g}$.

\begin{notation}\label{n.LeftInvMetric}
For a left-invariant metric $g$ on $G$ we denote by  $\langle u, v \rangle_{g}$ the corresponding inner product on the Lie algebra $\mathfrak{g}$. We sometimes abuse notation and use $g$ instead of $\langle \cdot, \cdot \rangle_{g}$
\end{notation}

\subsection{Standard Milnor bases and generalizations}

\begin{defin}\label{d.MilnorBasis}
We say an ordered basis $( u_1, u_2, u_3 )$ for $\mathfrak{su}(2)$ is a \emph{standard Milnor basis} if it satisfies
\begin{equation}\label{e.BasisLie Brackets}
[u_1, u_2] = u_3, \qquad [u_2, u_3]=u_1, \qquad [u_3, u_1] = u_2.
\end{equation}
\end{defin}

\begin{exa}\label{pauli-ex}
The Pauli matrices
\begin{equation}
  \widehat{u}_1 = \frac{1}{2}
  \begin{pmatrix}
 0 & -i \\ -i & 0
  \end{pmatrix}, \quad
  \widehat{u}_2 = \frac{1}{2}
  \begin{pmatrix}
 0 & -1 \\ 1 & 0
  \end{pmatrix}, \quad
  \widehat{u}_3 = \frac{1}{2}
  \begin{pmatrix}
 -i & 0 \\ 0 & i
  \end{pmatrix},
\end{equation}
are a standard Milnor basis.
\end{exa}

One has the following theorem which was originally proven by J.~Milnor. 

\begin{theo}[{Similar to \cite[Lemma~4.1]{Milnor1976}, see also \cite[Section~2.3]{EldredgeGordinaSaloff-Coste2018}}] \label{milnor-basis-su2}\label{standard-milnor}
Given any inner product $g$ on $\mathfrak{su}(2)$, there exists a  standard Milnor basis $(u_1, u_2, u_3)$ which is orthogonal with respect to $g$.  Moreover, if we set $a_i = \sqrt{ \langle u_i, u_i \rangle_{g}}$, $i=1,2,3$, then the standard Milnor basis $\{u_1, u_2, u_3\}$ can be chosen so that $a_1 \leqslant a_2 \leqslant a_3$.  Any two such choices result in the same ordered triple $(a_1, a_2, a_3)$, which we call the \textbf{parameters} of $g$, and this triple determines $g$ uniquely up to an isometric isomorphism.  
\end{theo}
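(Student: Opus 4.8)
The plan is to identify $\mathfrak{su}(2)$ with $\mathbb{R}^3$ in such a way that the Lie bracket becomes the cross product, and then to reduce the entire statement to the spectral theorem for symmetric positive-definite operators. First I would fix the reference standard Milnor basis $(\widehat u_1,\widehat u_2,\widehat u_3)$ of Example~\ref{pauli-ex} and introduce the bi-invariant inner product $\langle\cdot,\cdot\rangle_0 := -\tfrac12 B$, where $B$ is the Killing form of $\mathfrak{su}(2)$. Computing the matrices $\operatorname{ad}u_i$ in \emph{any} basis satisfying the bracket relations \eqref{e.BasisLie Brackets} (these are the standard generators of $\mathfrak{so}(3)$) gives $B(u_i,u_j) = -2\delta_{ij}$; hence every standard Milnor basis is orthonormal for $\langle\cdot,\cdot\rangle_0$, and in the coordinates furnished by $(\widehat u_1,\widehat u_2,\widehat u_3)$ the bracket is exactly the cross product of $\mathbb{R}^3$.

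The structural lemma I would prove next is: a triple $(u_1,u_2,u_3)$ is a standard Milnor basis if and only if $u_i = T\widehat u_i$ for some $T\in\operatorname{SO}(3)$. Indeed, any standard Milnor basis is $\langle\cdot,\cdot\rangle_0$-orthonormal by the previous step, so $u_i = T\widehat u_i$ with $T\in\operatorname{O}(3)$; using the identity $Tx\times Ty = (\det T)\,T(x\times y)$ together with the fact that the bracket is the cross product, one gets $[u_1,u_2] = (\det T)\,u_3$, so \eqref{e.BasisLie Brackets} forces $\det T = 1$. The converse is the same computation. Equivalently, this identifies the standard Milnor bases with the orbit of $(\widehat u_1,\widehat u_2,\widehat u_3)$ under $\operatorname{Aut}(\mathfrak{su}(2))\cong\operatorname{SO}(3)$.

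For existence, let $L$ be the $\langle\cdot,\cdot\rangle_0$-symmetric positive-definite operator representing $g$, i.e.\ $\langle X,Y\rangle_g = \langle LX,Y\rangle_0$. By the spectral theorem choose an orthonormal eigenbasis $v_1,v_2,v_3$ of $L$ with eigenvalues $0 < \lambda_1 \leqslant \lambda_2 \leqslant \lambda_3$; writing the $v_i$ as the columns of a matrix $T$ gives $T\in\operatorname{O}(3)$, and if $\det T = -1$ we replace $v_1$ by $-v_1$ (still an orthonormal eigenbasis, with the same ordered eigenvalues) so that $T\in\operatorname{SO}(3)$ — when eigenvalues coincide there is even more freedom, so this adjustment is always possible. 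Setting $u_i := T\widehat u_i$ yields a standard Milnor basis by the lemma, and $\langle u_i,u_j\rangle_g = \langle T^{-1}LT\widehat u_i,\widehat u_j\rangle_0 = \lambda_i\delta_{ij}$, so $(u_i)$ is $g$-orthogonal with $a_i = \sqrt{\lambda_i}$ and $a_1\leqslant a_2\leqslant a_3$. For uniqueness of the ordered triple, observe that for any $g$-orthogonal standard Milnor basis $(u_i)$ the matrix $T$ of its coordinate vectors lies in $\operatorname{SO}(3)$ by the lemma, and $T^{-1}LT$ is the $g$-Gram matrix of $(u_i)$, namely $\operatorname{diag}(a_1^2,a_2^2,a_3^2)$; thus $\{a_1^2,a_2^2,a_3^2\}$ is the multiset of eigenvalues of $L$, which is intrinsic, so imposing $a_1\leqslant a_2\leqslant a_3$ pins down the triple. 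Finally, if $g,g'$ have the same parameters, pick $g$- and $g'$-orthogonal standard Milnor bases $(u_i)$, $(u_i')$ with the common norms $a_i$; both are $\operatorname{SO}(3)$-translates of $(\widehat u_i)$, hence there is $\phi\in\operatorname{Aut}(\mathfrak{su}(2))$ with $\phi u_i = u_i'$, and then $\langle\phi X,\phi Y\rangle_{g'} = \langle X,Y\rangle_g$ on the basis $(u_i)$, hence everywhere, so $\phi$ is an isometric isomorphism.

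The only genuinely delicate point is the orientation bookkeeping in the structural lemma — tracking $\det T$ correctly through the bracket relation — together with the observation in the existence step that a diagonalizing orthogonal matrix can always be adjusted into $\operatorname{SO}(3)$ without disturbing the ordering of the eigenvalues. Everything else is the spectral theorem and routine verification.
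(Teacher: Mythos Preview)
The paper does not actually supply its own proof of this theorem; it is stated with a citation to Milnor's original paper and to the authors' earlier work, and the text moves on immediately. So there is nothing in the paper to compare your argument against line by line.

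That said, your proof is correct and is in fact the standard route. The identification of standard Milnor bases with the $\operatorname{SO}(3)$-orbit of the Pauli basis via the cross-product identity $Tx\times Ty=(\det T)\,T(x\times y)$ is the right structural lemma, and the orientation bookkeeping is handled cleanly. The reduction of existence and uniqueness to the spectral theorem for the operator $L$ defined by $\langle X,Y\rangle_g=\langle LX,Y\rangle_0$ is exactly what one expects, and it is also the mechanism the paper itself uses later when proving the analogous uniqueness statement for decoupled metrics (Proposition~\ref{parameters-well-defined}), where the operator $A$ with $\langle Au,w\rangle_{g_0}=\langle u,w\rangle_g$ plays the same role. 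Your final paragraph on the isometric-isomorphism claim, via $\operatorname{Aut}(\mathfrak{su}(2))\cong\operatorname{SO}(3)$, is likewise the natural argument and mirrors the proof of Proposition~\ref{up-to-isometry}. Nothing is missing.
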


For computational purposes, it is very convenient to work in a standard Milnor basis, because of the simplicity of the bracket relations \eqref{e.BasisLie Brackets}, $[u_i, u_j]=u_k$.  In particular, one obtains useful identities involving products of one-parameter subgroups $e^{t  u_i}$; see Section \ref{identities-sec} below.

We seek an analogous result for Lie algebras of the form $\mathfrak{g} = \mathfrak{su}(2) \oplus \mathfrak{z}$, with $\mathfrak{z}$ abelian.  The fundamental difficulty, as mentioned in Remark \ref{non-product}, is that for a generic inner product $g$ on $\mathfrak{g}$, the Lie subalgebras $\mathfrak{su}(2)$ and $\mathfrak{z}$ need not be orthogonal. Recall that we use $\oplus$ to denote the direct sum in the category of Lie algebras, not inner product spaces, so we do not intend it to
imply orthogonality.  When this is intended, we will write $\oplus_\perp$.

\begin{lem}\label{skewed-basis}
Let $g$ be an inner product on $\mathfrak{g} = \mathfrak{su}(2) \oplus \mathfrak{z}$, and let $\mathfrak{z}^\perp$ be the $g$-orthogonal complement of $\mathfrak{z}$.  There exist orthogonal
  vectors $v_1, v_2, v_3 \in \mathfrak{z}^\perp$ which can be written as 
\begin{equation}\label{v-u-h}
  v_i = u_i + h_i,
\end{equation}
where $u_1, u_2, u_3 \in \mathfrak{su}(2)$ is  a standard Milnor basis, and $h_1, h_2, h_3 \in \mathfrak{z}$.
\end{lem}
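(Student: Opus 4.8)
The plan is to start from the orthogonal projection onto $\mathfrak{z}^\perp$. Write $P : \mathfrak{g} \to \mathfrak{z}^\perp$ for the $g$-orthogonal projection; since $\mathfrak{z}$ is a complement of $\mathfrak{su}(2)$ as a vector space, the restriction $P|_{\mathfrak{su}(2)} : \mathfrak{su}(2) \to \mathfrak{z}^\perp$ is a linear isomorphism (both spaces have dimension $3$, and its kernel is $\mathfrak{su}(2)\cap\mathfrak{z} = \{0\}$). Transport the inner product $g|_{\mathfrak{z}^\perp}$ back to $\mathfrak{su}(2)$ via this isomorphism: define a new inner product $\tilde g$ on $\mathfrak{su}(2)$ by $\tilde g(X,Y) := g(PX, PY)$ for $X, Y \in \mathfrak{su}(2)$. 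This is genuinely an inner product because $P|_{\mathfrak{su}(2)}$ is injective.

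Next, apply Theorem~\ref{standard-milnor} to $\tilde g$: there is a standard Milnor basis $(u_1, u_2, u_3)$ of $\mathfrak{su}(2)$ that is $\tilde g$-orthogonal. Now set $v_i := P u_i \in \mathfrak{z}^\perp$. By construction $g(v_i, v_j) = g(Pu_i, Pu_j) = \tilde g(u_i, u_j) = 0$ for $i \neq j$, so $v_1, v_2, v_3$ are $g$-orthogonal vectors lying in $\mathfrak{z}^\perp$, as required. Finally, since $P$ is the $g$-orthogonal projection onto $\mathfrak{z}^\perp$, we can write $u_i = Pu_i + (u_i - Pu_i) = v_i + (-(u_i - Pu_i))$, and $u_i - Pu_i \in \mathfrak{z}$ because $I - P$ is the projection onto $\mathfrak{z}$. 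Hence $v_i = u_i - (u_i - Pu_i) = u_i + h_i$ with $h_i := -(u_i - Pu_i) \in \mathfrak{z}$, which is exactly the decomposition \eqref{v-u-h} with a $\tilde g$-orthogonal standard Milnor basis $(u_1, u_2, u_3)$.

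There is essentially no hard step here; the only things to check carefully are that $P|_{\mathfrak{su}(2)}$ is an isomorphism (immediate from $\mathfrak{su}(2) \cap \mathfrak{z} = 0$ and dimension count) and that $\tilde g$ is positive definite (immediate from injectivity of $P|_{\mathfrak{su}(2)}$). One small subtlety worth a sentence: the $v_i$ are generally \emph{not} a basis of $\mathfrak{z}^\perp$ obtained by orthogonalizing anything canonical, and their lengths $g(v_i, v_i)$ need not be ordered or related in a simple way to the parameters of $g|_{\mathfrak{su}(2)}$ — but the lemma claims nothing about lengths, so this does not matter at this stage. (Presumably the ordering and the relation of $\langle v_i,v_i\rangle_g$ and $\langle h_i, h_i\rangle_g$ to more convenient normalizations is handled in the subsequent definition of a decoupled metric.)

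If one prefers to avoid introducing the auxiliary inner product $\tilde g$ explicitly, the same argument can be phrased as: pull back the inner product structure of $(\mathfrak{z}^\perp, g)$ to $\mathfrak{su}(2)$ along the isomorphism $P|_{\mathfrak{su}(2)}$, apply Milnor's theorem there, and push the resulting orthogonal Milnor basis back into $\mathfrak{z}^\perp$. Either way the proof is short. I expect the "main obstacle" — such as it is — to be purely expository: making clear that $P|_{\mathfrak{su}(2)}$ is invertible (so that pulling back $g$ does give an inner product, not just a positive semidefinite form) and that $\mathrm{range}(I-P) = \mathfrak{z}$, which is where the normality or at least the vector-space complementarity of $\mathfrak{z}$ is used.
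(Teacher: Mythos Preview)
Your proof is correct and close in spirit to the paper's, but the direction of transport is reversed: the paper pushes the Lie bracket forward to $\mathfrak{z}^\perp$ (defining $[u,v]' := P[u,v]$, showing $(\mathfrak{z}^\perp,[\cdot,\cdot]')\cong\mathfrak{su}(2)$, applying Milnor there, and then checking afterward that the $\mathfrak{su}(2)$-components $u_i$ of the resulting $v_i$ satisfy the Milnor relations), whereas you pull the inner product back to $\mathfrak{su}(2)$ via $P|_{\mathfrak{su}(2)}$ and apply Milnor directly on $\mathfrak{su}(2)$ with its native bracket. Your route is marginally more economical, since the $u_i$ are a standard Milnor basis by construction and no final bracket verification is needed; the paper's route makes more visible that $\mathfrak{z}^\perp$ itself carries a canonical $\mathfrak{su}(2)$-structure. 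Either way the content is the same isomorphism $P|_{\mathfrak{su}(2)}:\mathfrak{su}(2)\to\mathfrak{z}^\perp$ together with a single invocation of Theorem~\ref{standard-milnor}.
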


\begin{proof}
Let $P: \mathfrak{g} \to \mathfrak{z}^\perp $ be the orthogonal projection onto $\mathfrak{z}^\perp $, and consider the antisymmetric bilinear operation $[\cdot, \cdot]'$ on $\mathfrak{z}^\perp$ defined by $[u,v]^{\prime} = P[u,v]$.

We claim that $(\mathfrak{z}^\perp , [\cdot, \cdot]^{\prime})$ is a Lie algebra isomorphic to $\mathfrak{su}(2)$.  First, for any $u, v \in \mathfrak{g}$, we have $P u - u, P v - v \in \mathfrak{z}$, where $\mathfrak{z}$ is the center of $\mathfrak{g}$, and  thus
\begin{equation*}
 [P u, Pv]^{\prime} = P[Pu, Pv] = P[u + (Pu - u), v + (Pv-v)] = P[u,v]
\end{equation*}
so that $P$ is a homomorphism of non-associative algebras. Since $P$ has kernel $\mathfrak{z}$ and image $\mathfrak{z}^\perp$, we have $(\mathfrak{z}^\perp, [\cdot,\cdot]^{\prime}) \cong \mathfrak{g}/\mathfrak{z} \cong \mathfrak{su}(2)$.

As such, Theorem~\ref{standard-milnor} implies that $(\mathfrak{z}^\perp , [\cdot, \cdot]^{\prime})$ admits a $g$-orthogonal standard Milnor basis $v_1, v_2, v_3 \in \mathfrak{z}^\perp $, so that $P[v_i, v_j] = v_k$ for $(i,j,k)$ being any cyclic permutation of $(1,2,3)$.  Since $\mathfrak{g} = \mathfrak{su}(2) \oplus \mathfrak{z}$  is a (non-orthogonal) direct sum, we can uniquely write $v_i = u_i + h_i$, where $u_i \in \mathfrak{su}(2)$ and $h_i \in \mathfrak{z}$.  It remains to show that $u_1, u_2, u_3$ form a standard Milnor basis with respect to $[\cdot, \cdot]$.  Note that $P u_i = P v_i = v_i$.  Since $P$ is an isomorphism of Lie algebras, we have
\begin{equation*}
 P[u_i, u_j] = [Pu_i, Pu_j]^{\prime} = [v_i, v_j]^{\prime} = v_k = P u_k
\end{equation*}
so that $[u_i, u_j] - u_k \in \mathfrak{z}$.  But $\mathfrak{su}(2) \cap \mathfrak{z} = 0$ so $[u_i, u_j]= u_k$.
\end{proof}

\subsection{A lifting construction}

The remaining difficulty is that the vectors $h_i$ in \eqref{v-u-h} need not be orthogonal to each other; they may even be linearly dependent.  However, by means of the following lifting procedure, we may reduce to the case where they are orthogonal, and even of equal length.

\begin{lem}\label{lift-euclidean}
Let $h_1, \dots, h_k \in \mathbb{R}^n$ be a collection of vectors, and let $\pi : \mathbb{R}^{n+k} \to \mathbb{R}^n$ be orthogonal projection onto the first $n$ coordinates.  Then there exist orthonormal vectors $f_1, \dots, f_k \in \mathbb{R}^{n+k}$ and a scalar $d$ such that $\pi(d f_i) = h_i$.
\end{lem}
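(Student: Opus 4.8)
The plan is to lift the vectors $h_1, \dots, h_k$ by adding extra coordinates, one for each vector, and then choose the lifts to be mutually orthogonal and of equal length. Write each vector in $\mathbb{R}^{n+k}$ in block form $(x, y)$ with $x \in \mathbb{R}^n$ and $y \in \mathbb{R}^k$, so that $\pi(x,y) = x$. The natural ansatz is to take
\begin{equation*}
  f_i = \frac{1}{d}\left( h_i, d\, e_i \right) = \left( \tfrac{1}{d} h_i, e_i \right),
\end{equation*}
where $e_1, \dots, e_k$ is the standard orthonormal basis of $\mathbb{R}^k$ and $d > 0$ is a scalar to be chosen. Then $\pi(d f_i) = h_i$ automatically, so the only thing to arrange is that the $f_i$ are orthonormal.

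Computing the inner product in $\mathbb{R}^{n+k}$, one gets
\begin{equation*}
  \langle f_i, f_j \rangle = \tfrac{1}{d^2} \langle h_i, h_j \rangle + \langle e_i, e_j \rangle = \tfrac{1}{d^2}\langle h_i, h_j\rangle + \delta_{ij}.
\end{equation*}
This is not yet the identity matrix unless the $h_i$ happen to be orthogonal, so the ansatz needs one more ingredient: replace $e_i$ by a suitable linear combination of the $e_j$. Equivalently, set $f_i = \left( \tfrac{1}{d} h_i, \sum_j c_{ij} e_j \right)$ for a matrix $C = (c_{ij})$ to be determined; then $\pi(d f_i) = h_i$ still holds, and the Gram matrix of the $f_i$ is $\tfrac{1}{d^2} H + C C^{\mathsf{T}}$, where $H = (\langle h_i, h_j\rangle)$ is the (symmetric, positive semidefinite) Gram matrix of the $h_i$. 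So it suffices to choose $d$ and $C$ with $C C^{\mathsf{T}} = I - \tfrac{1}{d^2} H$. Since $H$ is positive semidefinite, for $d$ large enough (specifically $d^2 \geqslant \|H\|_{\mathrm{op}}$, or just $d^2 \geqslant \operatorname{tr} H$) the matrix $I - \tfrac{1}{d^2}H$ is positive semidefinite, hence has a symmetric positive semidefinite square root $C$, and we are done.

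There is no real obstacle here; the only thing to watch is the degenerate case where the $h_i$ are linearly dependent, so that $H$ is only semidefinite rather than definite — but taking a square root of a positive \emph{semi}definite matrix is fine, and the $f_i$ one obtains are still genuinely orthonormal because the $e_i$-components $C$ are augmented precisely to compensate. (If one wants $C$ invertible one can take $d^2 > \|H\|_{\mathrm{op}}$ strictly, but this is not needed.) I would present the argument by exhibiting $f_i = (d^{-1} h_i, \sum_j c_{ij} e_j)$ with $C = (I - d^{-2}H)^{1/2}$ for any fixed $d$ with $d^2 \geqslant \operatorname{tr}(H)$, and then just verify $\langle f_i, f_j\rangle = \delta_{ij}$ and $\pi(df_i) = h_i$ by the block computation above.
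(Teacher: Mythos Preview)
Your proof is correct and is essentially the same as the paper's: both write $d f_i = (h_i, w_i)$ and observe that orthonormality of the $f_i$ amounts to the matrix $d^2 I - H$ (equivalently, your $I - d^{-2}H$) being positive semidefinite, which holds once $d^2$ dominates the largest eigenvalue of the Gram matrix $H = (\langle h_i, h_j\rangle)$. The only cosmetic difference is that you explicitly take a symmetric square root for $C$, whereas the paper simply invokes the existence of vectors with prescribed Gram matrix.
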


\begin{proof}
First denote $d f_i = (h_i, w_i)$ for some $w_1, \dots, w_k \in \mathbb{R}^k$.  The condition that the $f_i$ should be orthonormal then reads $\langle h_i, h_j \rangle + \langle w_i, w_j \rangle = d^2 \delta_{ij}$, or equivalently $\langle w_i, w_j \rangle =d^2 \delta_{ij} - \langle h_i, h_j \rangle$.  Such vectors $w_i$ can  be found if the symmetric matrix $(d^2 \delta_{ij} - \langle h_i, h_j \rangle)_{i,j=1}^k$ is positive semi-definite. This is the case if $d$ is chosen sufficiently large, namely, at least as large as the square root of the largest eigenvalue of the positive semi-definite matrix $(\langle h_i,  h_j \rangle)_{i,j=1}^k$.
\end{proof}

This motivates the following definition for the type of basis we will consider.

\begin{defin}\label{decoupled-def}
Let $\mathfrak{g}$ be a Lie algebra of the form $\mathfrak{g} = \mathfrak{su}(2) \oplus \mathfrak{z}$, where $\mathfrak{z}$ is abelian, and let $\langle \cdot, \cdot \rangle_{g}$ be an inner product on $\mathfrak{g}$.  We will say an ordered basis $(v_1, v_2, v_3, f_1, \dots f_{3+n})$ of $\mathfrak{g}$ is a \emph{decoupled Milnor basis} for $g$ if it satisfies the following properties.
\begin{enumerate}
\item The vectors $\{v_1, v_2, v_3, f_1, \dots f_{3+n}\}$ are $g$-orthogonal;
\item The vectors $\{f_1, \dots, f_{3+n}\}$ form a $g$-orthonormal basis for $\mathfrak{z}$;
\item There exists a scalar $d \in \mathbb{R}$ such that the vectors $v_1, v_2, v_3$ can be written as $v_i = u_i + d f_i$, where $u_1, u_2, u_3 \in \mathfrak{su}(2)$ is a standard Milnor basis.
\end{enumerate}
If $g$ admits a decoupled Milnor basis, we will simply say that $g=\langle \cdot, \cdot \rangle_{g}$ is a \emph{decoupled inner product}. 

We denote by $\mathcal{L}_{\operatorname{dec}}(\mathfrak{g})$ the set of all decoupled inner products on $\mathfrak{g}$, and by $\mathcal{L}_{\operatorname{dec}}(G)$ the set of all decoupled left-invariant Riemannian metrics on a Lie group $G$ whose Lie algebra $\mathfrak{g}$ is of this form.
\end{defin}

Having the coefficient $1$ on $u_i$ in the expression $v_i = u_i + d f_i$ will be very convenient in later computations, which is why we have chosen this normalization in which the $v_i$ need not be unit vectors.

For a given decoupled inner product $g$, there may be more than one decoupled Milnor basis $(v_1, v_2, v_3, f_1, \dots f_{3+n})$ satisfying the properties of Definition \ref{decoupled-def}, but our computations below will turn out to be independent of the choice of such a basis. \label{basis-choice}

\begin{lem}\label{lift-to-decoupled}
Let $\mathfrak{g} = \mathfrak{su}(2) \oplus \mathfrak{z}$, where $\mathfrak{z}$ is an abelian Lie algebra of dimension $n$, and let $g$ be an inner product on $\mathfrak{g}$.  Then there exists $\mathfrak{g}^\prime, g^\prime, \pi$ such that:
  \begin{itemize}
  \item $\mathfrak{g}^{\prime}$ is a Lie algebra of the form $\mathfrak{g}^{\prime} = \mathfrak{su}(2) \oplus \mathfrak{z}^{\prime}$, with $\mathfrak{z}^{\prime}$ abelian and $\dim \mathfrak{z}^{\prime} = \dim \mathfrak{z} + 3$;
  \item $g^{\prime}$ is a decoupled inner product on $\mathfrak{g}^{\prime}$; and
  \item $\pi : \mathfrak{g}^{\prime} \to \mathfrak{g}$ is a surjective
    Lie algebra homomorphism which is a partial isometry with respect
    to the inner products $g^{\prime}, g$.
  \end{itemize}
\end{lem}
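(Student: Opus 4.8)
The plan is to combine Lemma~\ref{skewed-basis} with the Euclidean lifting of Lemma~\ref{lift-euclidean}, and then define $\mathfrak{g}'$, $g'$, $\pi$ by hand.  First I would apply Lemma~\ref{skewed-basis} to $(\mathfrak{g}, g)$ to obtain $g$-orthogonal vectors $v_i = u_i + h_i$, $i = 1,2,3$, with $(u_1,u_2,u_3)$ a standard Milnor basis of $\mathfrak{su}(2)$ and $h_i \in \mathfrak{z}$.  Identify $\mathfrak{z} \cong \mathbb{R}^n$ with its $g$-inner product, so the $h_i$ become three vectors in $\mathbb{R}^n$.  Apply Lemma~\ref{lift-euclidean} with $k = 3$ to produce orthonormal $f_1, f_2, f_3 \in \mathbb{R}^{n+3}$ and a scalar $d$ with $\pi_0(d f_i) = h_i$, where $\pi_0 : \mathbb{R}^{n+3} \to \mathbb{R}^n$ is projection onto the first $n$ coordinates.

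Next I would set $\mathfrak{z}' = \mathbb{R}^{n+3}$ (abelian, of dimension $n+3$) with its standard inner product, and $\mathfrak{g}' = \mathfrak{su}(2) \oplus \mathfrak{z}'$ as a Lie algebra.  Define the inner product $g'$ on $\mathfrak{g}'$ by declaring the ordered basis $(v_1', v_2', v_3', f_1, \dots, f_{n+3})$ to be $g'$-orthogonal, where $v_i' := u_i + d f_i$; set $|f_j|_{g'} = 1$ for all $j$, and $|v_i'|_{g'}^2 := |v_i|_g^2 = |u_i|^2_{g\restriction\mathfrak{su}(2)} \text{-part} + |h_i|^2$ — more precisely, just declare $\langle v_i', v_i'\rangle_{g'} = \langle v_i, v_i\rangle_g$.  (One must check these vectors really form a basis of $\mathfrak{g}'$: the $f_j$ span $\mathfrak{z}'$, and modulo $\mathfrak{z}'$ the $v_i'$ reduce to the $u_i$, which span $\mathfrak{su}(2)$.)  By construction this $g'$ admits $(v_1', v_2', v_3', f_1, \dots, f_{n+3})$ as a decoupled Milnor basis in the sense of Definition~\ref{decoupled-def}, since the $f_j$ are $g'$-orthonormal and span the abelian factor, the whole basis is $g'$-orthogonal, and $v_i' = u_i + d f_i$ with $(u_i)$ a standard Milnor basis.

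Finally I would define $\pi : \mathfrak{g}' \to \mathfrak{g}$ as the Lie algebra map that is the identity on the $\mathfrak{su}(2)$ summand and equals $\pi_0 : \mathbb{R}^{n+3} \to \mathbb{R}^n$, i.e.\ $\pi_0 : \mathfrak{z}' \to \mathfrak{z}$, on the abelian summand.  This is a surjective Lie algebra homomorphism because it is built from Lie algebra maps on each summand (the abelian part being purely linear, which is automatically a homomorphism of abelian Lie algebras).  The point is to verify $\pi$ is a partial isometry: equivalently, that $\pi$ restricted to the $g'$-orthogonal complement of $\ker\pi$ is an isometry onto $(\mathfrak{g}, g)$.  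For this I would show $\pi(v_i') = u_i + \pi_0(d f_i) = u_i + h_i = v_i$, and that $\ker \pi$ is the $g'$-span of those $f_j$-combinations mapping to $0$ under $\pi_0$, which lies in $\mathfrak{z}'$ orthogonally to $v_1', v_2', v_3'$; since the $v_i$ are $g$-orthogonal with the same lengths as the $v_i'$, and they span a complement to $\mathfrak{z}$ suitably, one checks $\pi$ sends a $g'$-orthonormal basis of $(\ker\pi)^{\perp_{g'}}$ to a $g$-orthonormal basis of $\mathfrak{g}$.  The main obstacle is precisely this last bookkeeping: one has to be careful that the orthogonality declarations defining $g'$ are mutually consistent (they are, since we are simply prescribing a Gram matrix that is block-diagonal with positive diagonal) and that the decomposition $\mathfrak{g}' = (\ker\pi)^{\perp_{g'}} \oplus_\perp \ker\pi$ matches up, under $\pi$, with a genuine orthogonal decomposition realizing $(\mathfrak{g},g)$ — in particular that $\langle v_i', f_j\rangle_{g'} = 0$ forces $\pi$ to not distort lengths or angles among the surviving directions.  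I expect no serious difficulty beyond this careful matching of inner products.
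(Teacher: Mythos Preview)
Your proposal is correct and follows essentially the same route as the paper: apply Lemma~\ref{skewed-basis} to produce $v_i = u_i + h_i$, lift the $h_i$ to $d f_i$ via Lemma~\ref{lift-euclidean}, set $\mathfrak{g}' = \mathfrak{su}(2)\oplus\mathbb{R}^{n+3}$ with $\pi$ the identity on $\mathfrak{su}(2)$ and the coordinate projection on the abelian part, and define $g'$ so that $(v_1',v_2',v_3',f_1,\dots,f_{n+3})$ is orthogonal with $\langle v_i',v_i'\rangle_{g'}=\langle v_i,v_i\rangle_g$. The paper streamlines your final ``bookkeeping'' step by simply exhibiting the $g'$-orthonormal basis $(\tfrac{1}{a_i}v_i', e_1,\dots,e_{n+3})$ and observing that $\pi$ sends it to $(\tfrac{1}{a_i}v_i, e_1,\dots,e_n,0,0,0)$, which immediately shows $\pi$ is a partial isometry without needing to identify $(\ker\pi)^{\perp_{g'}}$ explicitly.
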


\begin{proof}
Let $v_i = u_i + h_i \in \mathfrak{g}$, $i=1,2,3$, be as in Lemma~\ref{skewed-basis}, and let $a_i = \sqrt{\langle v_i, v_i \rangle_{g}}$.  Identify $(\mathfrak{z}, \left.g\right|_{\mathfrak{z}})$ with $\mathbb{R}^n$, and let $\mathfrak{z}^{\prime} = \mathbb{R}^{n+3}$ equipped with the Euclidean inner product.  Let $\pi_{0} : \mathfrak{z}^{\prime} \to \mathfrak{z}$ be the projection onto the first $n$ coordinates, which is a partial isometry.  Let $\mathfrak{g}^{\prime} = \mathfrak{su}(2) \oplus \mathfrak{z}^{\prime}$ and $\pi(u+f) = u+\pi_0(f)$ for $u \in  \mathfrak{su}(2)$, $f \in \mathfrak{z}^{\prime}$.

By Lemma~\ref{lift-euclidean}, we can find $f_1, f_2, f_3$ orthonormal in $\mathfrak{z}^{\prime}$ and $d \in \mathbb{R}$ so that $\pi(d f_i) = h_i$.  Set $v_i^{\prime} = u_i + d f_i$, so that $\pi(v_i^{\prime}) = v_i$. Let $g^{\prime}$ be the unique inner product on $\mathfrak{g}^{\prime}$ which agrees with the Euclidean inner product on $\mathfrak{z}^{\prime}$ and for which the vectors $\frac{1}{a_i} v_i^{\prime}$ are orthonormal and orthogonal to $\mathfrak{z}^{\prime}$.  Then $\pi$ is a partial isometry of $(\mathfrak{g}^{\prime}, g^{\prime})$ to $(\mathfrak{g}, g)$: it maps the $g^\prime$-orthonormal basis $(\frac{1}{a_1} v_1^{\prime}, \frac{1}{a_2} v_2^{\prime}, \frac{1}{a_3} v_3^{\prime}, e_1, \dots, e_{n+3})$ to $(\frac{1}{a_1} v_1, \frac{1}{a_2} v_2, \frac{1}{a_3} v_3, e_1, \dots, e_{n}, 0, 0, 0)$.  Finally, to see that the inner product $g^\prime$ is decoupled, we can extend $f_1, f_2, f_3$ to an orthonormal basis $f_1, \dots, f_{n+3}$ for $\mathfrak{z}^{\prime}$; then $(v_1^{\prime}, v_2^{\prime}, v_3^{\prime}, f_1, \dots, f_{n+3})$ is a decoupled Milnor basis for $(\mathfrak{g}^\prime,g^\prime)$.
\end{proof}

The last step is to note that when we have a decoupled inner product on $\mathfrak{g} = \mathfrak{su}(2) \oplus \mathfrak{z}$, then all but $3$ dimensions of $\mathfrak{z}$ are essentially irrelevant to the questions at hand,  and therefore they can be dispensed with.

\begin{lem}\label{dec-product}
Let $g$ be a decoupled inner product on $\mathfrak{g} = \mathfrak{su}(2) \oplus \mathfrak{z}$, where $\mathfrak{z}$ is abelian and $\dim \mathfrak{z} \geqslant 3$.  Then $(\mathfrak{g}, g)$ decomposes as an orthogonal Lie algebra direct sum $\mathfrak{g} = \mathfrak{g}_1 \oplus_\perp \mathfrak{g}_2$ such that
\begin{itemize}
  \item $\mathfrak{g}_1$ is isomorphic to $\mathfrak{su}(2) \oplus
 \mathbb{R}^3$;
  \item The restriction $g_1$ of the inner product $g$ to
 $\mathfrak{g}_1$ is decoupled; and
  \item $\mathfrak{g}_2$ is abelian.
  \end{itemize}
\end{lem}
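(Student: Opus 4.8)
The plan is to extract a copy of $\mathfrak{su}(2)\oplus\mathbb{R}^3$ spanned by a decoupled Milnor basis and take its orthogonal complement inside $\mathfrak{z}$. Concretely, fix a decoupled Milnor basis $(v_1,v_2,v_3,f_1,\dots,f_{3+n})$ for $g$ as in Definition~\ref{decoupled-def}, where $v_i=u_i+df_i$ with $(u_1,u_2,u_3)$ a standard Milnor basis of $\mathfrak{su}(2)$ and $(f_1,\dots,f_{3+n})$ a $g$-orthonormal basis of $\mathfrak{z}$. Set $\mathfrak{g}_1=\operatorname{span}\{v_1,v_2,v_3,f_1,f_2,f_3\}$ and $\mathfrak{g}_2=\operatorname{span}\{f_4,\dots,f_{3+n}\}$. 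Then $\mathfrak{g}=\mathfrak{g}_1\oplus\mathfrak{g}_2$ as vector spaces, and by property~(1) of Definition~\ref{decoupled-def} the two subspaces are $g$-orthogonal, so $\mathfrak{g}=\mathfrak{g}_1\oplus_\perp\mathfrak{g}_2$ as inner product spaces.

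Next I would check that this is a Lie algebra direct sum, i.e.\ that $\mathfrak{g}_1$ and $\mathfrak{g}_2$ are each subalgebras and that $[\mathfrak{g}_1,\mathfrak{g}_2]=0$. Since $\mathfrak{z}$ is central and abelian, $f_4,\dots,f_{3+n}$ commute with everything, giving both $[\mathfrak{g}_1,\mathfrak{g}_2]=0$ and that $\mathfrak{g}_2$ is abelian. For $\mathfrak{g}_1$: the brackets among the $f_i$ vanish; $[v_i,f_j]=[u_i+df_i,f_j]=0$ since $f_j\in\mathfrak{z}$ is central; and $[v_i,v_j]=[u_i,u_j]=u_k=v_k-df_k\in\mathfrak{g}_1$ for $(i,j,k)$ a cyclic permutation of $(1,2,3)$, using $\mathfrak{su}(2)\subset\mathfrak{su}(2)\oplus\mathfrak{z}$ and the standard Milnor relations \eqref{e.BasisLie Brackets}. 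Hence $\mathfrak{g}_1$ is a subalgebra. To identify its isomorphism type, note $\mathfrak{g}_1$ contains the central ideal $\operatorname{span}\{f_1,f_2,f_3\}\cong\mathbb{R}^3$, and the vectors $u_i=v_i-df_i\in\mathfrak{g}_1$ span a complementary subalgebra isomorphic to $\mathfrak{su}(2)$; since $[\mathfrak{su}(2),\mathbb{R}^3]=0$, we get $\mathfrak{g}_1\cong\mathfrak{su}(2)\oplus\mathbb{R}^3$.

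Finally, the restriction $g_1=g|_{\mathfrak{g}_1}$ is decoupled because $(v_1,v_2,v_3,f_1,f_2,f_3)$ is literally a decoupled Milnor basis for it: the orthogonality in~(1) and~(2) is inherited from the ambient basis, and~(3) holds with the same scalar $d$ and the same standard Milnor basis $(u_1,u_2,u_3)$, which now sits in the $\mathfrak{su}(2)$ summand of $\mathfrak{g}_1$. This completes the decomposition.

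I do not expect any genuine obstacle here: the statement is essentially bookkeeping once one writes down a decoupled Milnor basis, and the only thing to be slightly careful about is that $\mathfrak{g}_1$ is closed under the bracket (which relies precisely on the fact that $[v_i,v_j]=u_k$ lies in $\mathfrak{g}_1$, not merely in $\mathfrak{g}$) and that the isomorphism $\mathfrak{g}_1\cong\mathfrak{su}(2)\oplus\mathbb{R}^3$ respects the Lie algebra structure, which follows from centrality of $\mathfrak{z}$. The mild subtlety worth a sentence in the write-up is that a decoupled inner product may admit several decoupled Milnor bases (as remarked after Definition~\ref{decoupled-def}), so the decomposition $\mathfrak{g}=\mathfrak{g}_1\oplus_\perp\mathfrak{g}_2$ is not canonical — but any choice works, which is all that is claimed.
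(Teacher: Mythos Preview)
Your proposal is correct and follows essentially the same approach as the paper: both define $\mathfrak{g}_1=\operatorname{span}\{v_1,v_2,v_3,f_1,f_2,f_3\}$ and $\mathfrak{g}_2=\operatorname{span}\{f_4,\dots,f_{3+n}\}$ from a chosen decoupled Milnor basis and verify the three bullet points directly. The only cosmetic difference is that the paper observes $\mathfrak{su}(2)=[\mathfrak{g},\mathfrak{g}]\subset\mathfrak{g}_1$ (since $u_i=v_i-df_i\in\mathfrak{g}_1$) to conclude $\mathfrak{g}_1$ is a subalgebra, whereas you compute the brackets $[v_i,v_j]$ explicitly; these are equivalent.
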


\begin{proof}
Set $n := \dim \mathfrak{z} \geqslant 3$, and let $(v_1, v_2, v_3, f_1, \dots, f_n)$ be a decoupled Milnor basis for $(\mathfrak{g}, g)$, which in particular is an orthogonal basis.  Set
\begin{align*}
\mathfrak{g}_1 &= \operatorname{span} \{v_1, v_2, v_3, f_1, f_2,
 f_3\} 
\\
\mathfrak{g}_2 &= \operatorname{span} \{f_4, \dots, f_n\}.
\end{align*}
By construction, $\mathfrak{g}$ is the orthogonal direct sum of the  linear subspaces $\mathfrak{g}_1, \mathfrak{g}_2$.

Now since $u_i = v_i - d_i f_i \in \mathfrak{g}_1$ for  $i=1,2,3$, we have $\mathfrak{su}(2) = [\mathfrak{g}, \mathfrak{g}]  \subset \mathfrak{g}_1$, and so $\mathfrak{g}_1$ is a Lie subalgebra of $\mathfrak{g}$.  Clearly $\mathfrak{g}_1 \cong \mathfrak{su}(2) \oplus \mathbb{R}^3$.  Moreover, the restriction of $g$ to $\mathfrak{g}_1$ is again decoupled, since $(v_1, v_2, v_3, f_1, f_2, f_3)$ is a decoupled Milnor  basis for $g|_{\mathfrak{g}_1}$.

Finally, since $\mathfrak{g}_2 \subset \mathfrak{z}$, it is trivial that $\mathfrak{g}_2$ is an abelian Lie subalgebra of   $\mathfrak{g}$ that commutes with $\mathfrak{g}_1$.  This completes the proof.
\end{proof}

Assembling together the previous lemmas, we can reduce our main theorem to showing that the decoupled metrics on $\operatorname{SU}(2) \times \mathbb{R}^3$ are uniformly doubling.

\begin{theo}\label{doubling-reduction}
Suppose the family of decoupled left-invariant Riemannian metrics $\mathcal{L}_{\operatorname{dec}}\left( \operatorname{SU}(2) \times \mathbb{R}^3 \right)$ is uniformly doubling with some constant $D_{\operatorname{dec}}\left( \operatorname{SU}(2) \times \mathbb{R}^{3} \right)$. Then each Lie group $G$ of the form $G =\operatorname{SU}(2) \times \mathbb{R}^n$ is uniformly doubling, with constant at most $(2^n D_{\operatorname{dec}}(\operatorname{SU}(2) \times \mathbb{R}^3))^{4}$.  Moreover, every quotient $H = G / S$ of such a group by a (closed) normal subgroup is uniformly doubling, with the same bound on its constant.
\end{theo}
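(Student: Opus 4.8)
The plan is to chain together the lifting construction for Lie algebras (Lemmas~\ref{skewed-basis}, \ref{lift-euclidean}, \ref{lift-to-decoupled}, \ref{dec-product}) with the quotient stability of volume doubling (Corollary~\ref{uniformly-quotient}), integrating a product-metric estimate (Proposition~\ref{doubling-product}) to peel off the extra abelian factor. First I would fix $G = \operatorname{SU}(2) \times \mathbb{R}^n$ and an arbitrary $g \in \mathcal{L}(G)$, which corresponds to an inner product on $\mathfrak{g} = \mathfrak{su}(2) \oplus \mathbb{R}^n$. Apply Lemma~\ref{lift-to-decoupled} to produce $\mathfrak{g}' = \mathfrak{su}(2) \oplus \mathbb{R}^{n+3}$, a decoupled inner product $g'$ on $\mathfrak{g}'$, and a surjective Lie algebra homomorphism $\pi : \mathfrak{g}' \to \mathfrak{g}$ which is a $g'$-to-$g$ partial isometry. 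Exponentiating, $\pi$ integrates to a surjective Lie group homomorphism $\operatorname{SU}(2) \times \mathbb{R}^{n+3} \to \operatorname{SU}(2) \times \mathbb{R}^n$ (both groups being simply connected, hence determined by their Lie algebras), and the left-invariant metrics determined by $g', g$ make this map a Riemannian submersion, i.e.\ $g = \pi_* g'$ in the sense of Lemma~\ref{quotient-lift}.

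Next I would invoke Lemma~\ref{dec-product}: the decoupled inner product $g'$ on $\mathfrak{g}' = \mathfrak{su}(2) \oplus \mathbb{R}^{n+3}$ splits as an \emph{orthogonal} Lie algebra direct sum $\mathfrak{g}_1 \oplus_\perp \mathfrak{g}_2$ with $\mathfrak{g}_1 \cong \mathfrak{su}(2) \oplus \mathbb{R}^3$ carrying a decoupled restriction $g_1$, and $\mathfrak{g}_2 \cong \mathbb{R}^n$ abelian. Because this splitting is orthogonal, the corresponding left-invariant metric on $\operatorname{SU}(2) \times \mathbb{R}^{n+3}$ is precisely the Riemannian product of $(\operatorname{SU}(2) \times \mathbb{R}^3, g_1)$ with $(\mathbb{R}^n, \text{Euclidean})$. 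By hypothesis $g_1$ has doubling constant at most $D_{\operatorname{dec}}(\operatorname{SU}(2) \times \mathbb{R}^3)$, and the Euclidean $\mathbb{R}^n$ has doubling constant $2^n$, so Proposition~\ref{doubling-product} bounds the doubling constant of $g'$ (as a metric on $\operatorname{SU}(2) \times \mathbb{R}^{n+3}$) by $(D_{\operatorname{dec}}(\operatorname{SU}(2) \times \mathbb{R}^3))^2 (2^n)^2 = (2^n D_{\operatorname{dec}}(\operatorname{SU}(2) \times \mathbb{R}^3))^2$. Then Corollary~\ref{uniformly-quotient} (or directly Theorem~\ref{doubling-quotient}/Corollary~\ref{uniformly-quotient-all}), applied to the submersion $\pi$, gives the doubling constant of $(G,g)$ bounded by the square of this, namely $(2^n D_{\operatorname{dec}}(\operatorname{SU}(2) \times \mathbb{R}^3))^4$. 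Since $g$ was arbitrary, $G = \operatorname{SU}(2) \times \mathbb{R}^n$ is uniformly doubling with this constant. For a quotient $H = G/S$, one applies Corollary~\ref{uniformly-quotient-all} once more (the quotient map $G \to H$ being a submersion for the push-forward metric by Lemma~\ref{quotient-lift}), and since that corollary's bound is again a square of the doubling constant while $4$ is already a power of $2$, no worsening is needed—more carefully, one checks that $H$ inherits the bound $(2^n D_{\operatorname{dec}}(\operatorname{SU}(2) \times \mathbb{R}^3))^4$ because $H$ is itself realized as a quotient of $\operatorname{SU}(2) \times \mathbb{R}^{n+3}$ via a single submersion, to which the same product-plus-quotient argument applies verbatim.

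The main obstacle, and the step requiring the most care, is the passage from the algebra-level maps of Lemma~\ref{lift-to-decoupled} to \emph{group-level Riemannian submersions}: one must confirm that a Lie algebra homomorphism which is a partial isometry exponentiates to a group homomorphism which is a Riemannian submersion for the associated left-invariant metrics, using simple connectedness of $\operatorname{SU}(2) \times \mathbb{R}^{n+3}$ and $\operatorname{SU}(2) \times \mathbb{R}^n$ to guarantee the group homomorphism exists and is surjective, and then matching the horizontal-space condition at the identity (which propagates by left-invariance, exactly as in the proof of Lemma~\ref{quotient-lift}). A secondary point to handle cleanly is the bookkeeping for the quotient $H = G/S$: rather than composing two submersions and squaring twice (which would give exponent $8$), one should observe that $H$ is directly a quotient of $\operatorname{SU}(2) \times \mathbb{R}^{n+3}$ by the appropriate closed normal subgroup, and that the lifted metric on $\operatorname{SU}(2) \times \mathbb{R}^{n+3}$ can still be taken of decoupled-plus-product form, so that a single application of Theorem~\ref{doubling-quotient} after Proposition~\ref{doubling-product} yields the claimed exponent $4$ for $H$ as well. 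Everything else is routine substitution into the inequalities already established.
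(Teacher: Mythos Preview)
Your proposal is correct and follows essentially the same route as the paper: lift $g$ to a decoupled metric $g'$ on $\operatorname{SU}(2)\times\mathbb{R}^{n+3}$ via Lemma~\ref{lift-to-decoupled}, split off the abelian $\mathbb{R}^n$ factor orthogonally via Lemma~\ref{dec-product}, apply Proposition~\ref{doubling-product} and then Theorem~\ref{doubling-quotient}, and for the quotient $H$ compose the two submersions into a single one $G'\to H$ so that only one application of Theorem~\ref{doubling-quotient} is needed. You have also correctly identified the two points requiring care (exponentiating the partial isometry to a Riemannian submersion via simple connectedness, and avoiding the double squaring for $H$), both of which the paper handles exactly as you describe.
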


In other words, if $\mathcal{L}_{\operatorname{dec}}(\operatorname{SU}(2) \times \mathbb{R}^3)$ is uniformly doubling, then every connected Lie group $H$ whose universal cover is of the form $\operatorname{SU}(2) \times \mathbb{R}^n$ is likewise uniformly doubling, with a bound depending only on $n$.  This includes the unitary group $\operatorname{U}(2)$, the special orthogonal group
$\operatorname{SO}(3)$, and every group $\operatorname{SU}(2) \times Z$ with $Z$ being connected and abelian.

\begin{proof}
Fix $n$ and let $g$ be an arbitrary left-invariant Riemannian metric on $G = \operatorname{SU}(2) \times \mathbb{R}^n$; as before, we also use $g$ for the corresponding inner product on the Lie algebra $\mathfrak{g} = \operatorname{su}(2) \oplus \mathbb{R}^n$. By Lemma~\ref{lift-to-decoupled} there is a decoupled inner product $g^{\prime}$ on $\mathfrak{g}^{\prime}:= \mathfrak{su}(2) \oplus \mathbb{R}^{3+n}$ and a surjective Lie algebra homomorphism $\pi : \mathfrak{g}^{\prime} \to \mathfrak{g}$ which is a partial isometry of $g^{\prime}$ to $g$.  Let $G^{\prime} =  \operatorname{SU}(2) \times \mathbb{R}^{3+n}$, on which $g^{\prime}$ may be viewed as a left-invariant Riemannian metric.  As both $G$ and $G^{\prime}$ are connected and simply connected, $\pi$ lifts to a smooth homomorphism $\Pi: G^{\prime} \to G$ with $d\Pi_e = \pi$.  Since $\pi$ was a partial isometry, $\Pi$ is a Riemannian submersion of the metrics $g^{\prime}, g$.

Now as shown by Lemma~\ref{dec-product}, the Lie algebra $\mathfrak{g}^{\prime}$ decomposes orthogonally under $g^{\prime}$ as $\mathfrak{g}^{\prime} = \mathfrak{g}_1^{\prime} \oplus_\perp \mathfrak{g}_2^{\prime}$, where the inner product on $\mathfrak{g}_1^{\prime} \cong \mathfrak{su}(2) \oplus \mathbb{R}^3$ is decoupled, and $\mathfrak{g}_2^{\prime} \cong \mathbb{R}^n$ is abelian.
Hence $g^{\prime}$ is a product metric on $G^{\prime} = G_1^{\prime} \times G_2^{\prime}$, whose  restriction to $G^{\prime} \cong \operatorname{SU}(2) \times \mathbb{R}^3$ is decoupled, and where $G_2^{\prime} \cong \mathbb{R}^n$ is abelian.  Since every left-invariant Riemannian metric on $G_2$ is isometric to the Euclidean metric on $\mathbb{R}^n$, and has  doubling constant $2^n$, by Proposition~\ref{doubling-product} we have
\begin{equation*}
D_{g^{\prime}} \leqslant (2^n D_{g_1^{\prime}})^2 \leqslant (2^n D_{\operatorname{dec}}(\operatorname{SU}(2) \times \mathbb{R}^3))^2.
\end{equation*}
By an application of Theorem~\ref{doubling-quotient}, we now have
  \begin{equation*}
 D_g \leqslant D_{g^{\prime}}^2 \leqslant (2^n D_{\operatorname{dec}}(\operatorname{SU}(2) \times \mathbb{R}^3))^4.
  \end{equation*}
 Since $g$ was an arbitrary left-invariant metric on $G$, we conclude that $G$ is uniformly doubling.

Now suppose $H = G / S$ is a quotient of $G$, where $S \leqslant G$ is a closed normal subgroup, and let $\varphi : G \to H$ be the quotient map.  If $h$ is any left-invariant metric on $H$, then by Lemma~\ref{quotient-lift} we have a left-invariant metric $g$ on $G$ such that $\varphi$ is a Riemannian submersion from $(G, g)$ to $(H,h)$.  Constructing $(G^{\prime}, g^{\prime})$ and $\pi : G^{\prime} \to G$ as above,  we have that $\varphi \circ \pi : (G^{\prime}, g^{\prime}) \to (H,h)$ is a group homomorphism and a Riemannian submersion, so we can again  apply Theorem~\ref{doubling-quotient} to get the conclusion
\begin{equation*}
 D_h \leqslant D_{g^{\prime}}^2 \leqslant (2^n D_{\operatorname{dec}}(\operatorname{SU}(2) \times \mathbb{R}^3))^4.
\end{equation*}
\end{proof}

\subsection{The bi-invariant reference metric}

It will be convenient to make use of the following fixed reference metric and corresponding measure on $\operatorname{SU}(2) \times \mathbb{R}^3$.

\begin{notation}\label{g0-def}
Let $g_0$ be the $\operatorname{Ad}$-invariant inner product on $\mathfrak{g} = \mathfrak{su}(2) \oplus \mathbb{R}^3$ defined by
\begin{equation*}
\langle u+f, u^{\prime}+f^{\prime}\rangle_{g_{0}} = -2 \tr(u u^{\prime}) + \langle f, f^{\prime}\rangle_{\mathbb{R}^{3}},
\end{equation*}
where $u, u^{\prime} \in \mathfrak{su}(2)$ are realized as $2 \times 2$ complex matrices, $f, f^{\prime} \in \mathbb{R}^3$, and $\langle \cdot,   \cdot \rangle_{\mathbb{R}^{3}}$ is the usual Euclidean inner product on $\mathbb{R}^3$.  We also denote by $g_0$ the corresponding  bi-invariant Riemannian metric on $G = \operatorname{SU}(2) \times  \mathbb{R}^3$.  Finally, let  $\mu_0$ be the Riemannian volume measure of $g_0$, which is simply a particular normalization of the  bi-invariant Haar measure on $G$.
\end{notation}

\begin{pro}\label{milnor-orthonormal}
Any standard Milnor basis $u_1, u_2, u_3 \in \mathfrak{su}(2)$ is orthonormal with respect to $g_0$.
\end{pro}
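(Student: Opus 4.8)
The plan is to exhibit a concrete standard Milnor basis of $\mathfrak{su}(2)$, check that it is $g_0$-orthonormal by direct computation, and then invoke the uniqueness clauses of Theorem~\ref{standard-milnor} to transfer the conclusion to an arbitrary standard Milnor basis. Concretely, I would first take the Pauli-type basis $\widehat u_1,\widehat u_2,\widehat u_3$ from Example~\ref{pauli-ex}, which is already known to be a standard Milnor basis, and compute the Gram matrix $\bigl(-2\tr(\widehat u_i\widehat u_j)\bigr)_{i,j}$. A short matrix computation gives $\widehat u_i^2 = -\tfrac14 I$ for each $i$, hence $-2\tr(\widehat u_i\widehat u_i) = -2\cdot(-\tfrac12) = 1$, and for $i\neq j$ one checks $\tr(\widehat u_i \widehat u_j)=0$; so $\langle \widehat u_i,\widehat u_j\rangle_{g_0}=\delta_{ij}$, i.e.\ $(\widehat u_1,\widehat u_2,\widehat u_3)$ is $g_0$-orthonormal.

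Next I would upgrade this to \emph{every} standard Milnor basis. The cleanest route is to observe that any two standard Milnor bases of $\mathfrak{su}(2)$ differ by an element of $\operatorname{SO}(3)$ acting in the given coordinates: the relations \eqref{e.BasisLie Brackets} say precisely that the structure constants are the totally antisymmetric symbol $\varepsilon_{ijk}$, and a linear change of basis preserving $\varepsilon_{ijk}$ must lie in $\operatorname{SO}(3)$ (indeed, writing the new basis $u_i = \sum_j A_{ji}\widehat u_j$, the bracket relations force $A^{-1}$, equivalently $A$, to be a rotation). Since $g_0$ restricted to $\mathfrak{su}(2)$ is, in the orthonormal coordinates furnished by $\widehat u_1,\widehat u_2,\widehat u_3$, just the standard inner product on $\mathbb{R}^3$, and $\operatorname{SO}(3)$ preserves that inner product, the basis $(u_1,u_2,u_3)$ is again $g_0$-orthonormal. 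Alternatively, and perhaps more in the spirit of the paper, one can argue via Theorem~\ref{standard-milnor}: $g_0|_{\mathfrak{su}(2)}$ is itself an inner product on $\mathfrak{su}(2)$, its parameters are $(a_1,a_2,a_3)=(1,1,1)$ by the computation above, and the last sentence of Theorem~\ref{standard-milnor} says these parameters are the common values $\sqrt{\langle u_i,u_i\rangle_{g_0}}$ for \emph{any} orthogonal standard Milnor basis — but one still needs orthogonality, so the $\operatorname{SO}(3)$ argument is the honest one.

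The only mild obstacle is the second step: making precise that the transition matrix between two standard Milnor bases is a rotation. This is a standard fact but should be stated, not waved at; the quickest justification is that the Killing form of $\mathfrak{su}(2)$ is a fixed multiple of $g_0|_{\mathfrak{su}(2)}$, is preserved by any Lie algebra automorphism, and any change between standard Milnor bases is realized by such an automorphism (it preserves the bracket by \eqref{e.BasisLie Brackets}), hence preserves $g_0|_{\mathfrak{su}(2)}$; combined with the normalization $\det = +1$ forced by the cyclic orientation in \eqref{e.BasisLie Brackets}, this gives membership in $\operatorname{SO}(3)$ and the claim follows. Everything else is a one-line trace computation.
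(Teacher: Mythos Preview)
Your argument is correct, but the paper takes a shorter and more direct route. Rather than verifying orthonormality first for the concrete Pauli basis and then transferring to an arbitrary standard Milnor basis via an automorphism\,/\,Killing-form argument, the paper simply cites \cite[Lemma~2.7]{EldredgeGordinaSaloff-Coste2018}, which already establishes the matrix identities $u_i^2=-\tfrac14 I$ and $u_i u_j=\tfrac12 u_k$ for \emph{every} standard Milnor basis, not just the Pauli one. From these, the trace computation $\langle u_i,u_i\rangle_{g_0}=-2\tr(-\tfrac14 I)=1$ and $\langle u_i,u_j\rangle_{g_0}=-2\cdot\tfrac12\tr(u_k)=0$ (since $u_k\in\mathfrak{su}(2)$ is traceless) is immediate. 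Your approach is more self-contained and conceptually clean---the invariance of the Killing form under Lie algebra automorphisms is indeed the underlying reason $g_0$ is blind to the choice of Milnor basis---whereas the paper's proof is essentially a one-liner that offloads the work to the earlier reference. Note also that the paper later records and uses exactly these matrix identities (see Notation~\ref{XYZ-notation}), so citing them here is consistent with the rest of the exposition.
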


\begin{proof}
This follows from \cite[Lemma 2.7]{EldredgeGordinaSaloff-Coste2018}, where it is shown that, as matrices, $u_i^2 = -\frac{1}{4} I$ and $u_i u_j = \frac{1}{2} u_k$.  Thus $\langle u_i, u_i \rangle_{g_{0}} = -\frac{1}{2}\tr(I) = 1$, and $\langle u_i, u_j\rangle_{g_{0}} = - \tr(u_k) = 0$ since every $u \in \mathfrak{su}(2)$ has zero trace.
\end{proof}

\begin{rem}\label{g0-decoupled}
As a consequence, $g_0$ is decoupled: if $u_1, u_2, u_3$ is a standard Milnor basis for $\mathfrak{su}(2)$ and $f_1, f_2, f_3$ is an orthonormal basis for $\mathbb{R}^3$, then $(u_1, u_2, u_3, f_1, f_2, f_3)$ satisfies the definition of a decoupled Milnor basis, with $v_i = u_i$ and $d = 0$.
\end{rem}

\subsection{Parametrization of decoupled metrics}

There is a convenient parametrization of the decoupled inner products on $\mathfrak{su}(2) \oplus \mathbb{R}^3$, which can be seen as an extension of the parametrization for inner products on
$\mathfrak{su}(2)$ used in \cite[Section 2.3]{EldredgeGordinaSaloff-Coste2018}.

\begin{defin}\label{parameters}
  The \emph{parameters} of a decoupled inner product $g$ on $\mathfrak{g} = \mathfrak{su}(2) \oplus \mathbb{R}^3$, with respect to a corresponding decoupled Milnor basis $( v_1, v_2, v_3, f_1, f_2, f_3)$, are the 4-tuple of numbers $(a_1, a_2, a_3, d)$, where $a_i = \sqrt{\langle v_i, v_i \rangle_{g}}$, and $d \in \mathbb{R}$ is as in Definition
\ref{decoupled-def} so that $u_i := v_i - d f_i$ form a standard Milnor basis for $\mathrm{su}(2)$.
\end{defin}

\begin{pro}\label{parameters-well-defined}
  The parameters $(a_1, a_2, a_3, d)$ of a given decoupled inner product $g$ are independent of the choice of decoupled Milnor basis, up to a reordering of the $a_i$ and a sign change of $d$.  Moreover, the basis can be
chosen so that $a_1 \leqslant a_2 \leqslant a_3$ and $d \geqslant 0$.
\end{pro}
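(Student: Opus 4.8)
The plan is to reduce the statement to the corresponding fact for $\mathfrak{su}(2)$, namely Theorem~\ref{standard-milnor}, by exploiting the almost-canonical nature of the construction in Lemma~\ref{skewed-basis}. First I would observe that, given a decoupled inner product $g$ with decoupled Milnor basis $(v_1, v_2, v_3, f_1, f_2, f_3)$, the subspace $\mathfrak{z}^\perp = \operatorname{span}\{v_1, v_2, v_3\}$ and the resulting Lie algebra structure $[\cdot,\cdot]'=P[\cdot,\cdot]$ on it are intrinsic to $g$ (they do not depend on any choice of basis), and the projection $P$ carries $(\mathfrak{z}^\perp, [\cdot,\cdot]')$ isomorphically onto $\mathfrak{su}(2)$ exactly as in the proof of Lemma~\ref{skewed-basis}. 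Under this isomorphism, the $v_i$ are precisely a $g$-orthogonal standard Milnor basis of $(\mathfrak{z}^\perp, [\cdot,\cdot]')$, and their lengths are the $a_i$. So by the uniqueness clause of Theorem~\ref{standard-milnor} applied to the inner product $g|_{\mathfrak{z}^\perp}$ on the Lie algebra $(\mathfrak{z}^\perp,[\cdot,\cdot]')$, the ordered triple $\{a_1, a_2, a_3\}$, taken as a multiset, is determined by $g$ alone, and can be ordered as $a_1 \leqslant a_2 \leqslant a_3$.

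Next I would pin down $d$. Given a decoupled Milnor basis, we have $v_i = u_i + d f_i$ with $u_i = v_i - d f_i \in \mathfrak{su}(2)$ a standard Milnor basis and $\{f_1, f_2, f_3\}$ a $g$-orthonormal basis of $\mathfrak{z}$ orthogonal to $\mathfrak{z}^\perp$. The key point is that $h_i := v_i - u_i \in \mathfrak{z}$ is the $g$-orthogonal projection of $v_i$ onto $\mathfrak{z}$, hence is intrinsic to $g$ and to the choice of the $v_i$ (which, by the previous paragraph, are determined up to the residual freedom in Theorem~\ref{standard-milnor}, i.e.\ up to an isometric automorphism of $\mathfrak{su}(2)$ permuting equal-length $v_i$'s and up to overall sign/orthogonal transformations that preserve the Milnor relations). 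Now $\langle h_i, h_j\rangle_g = d^2 \delta_{ij}$ because $h_i = d f_i$ and the $f_i$ are $g$-orthonormal; in particular $d^2 = \langle h_i, h_i\rangle_g = \|v_i\|_g^2 - \|u_i\|_g^2$, which does not depend on $i$ and is manifestly a function of $g$ and the $v_i$. Thus $|d|$ is determined by $g$, and replacing $f_i$ by $-f_i$ (which preserves orthonormality of the $f$'s and the decoupled form) flips the sign of $d$, so we may arrange $d \geqslant 0$.

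The remaining task is to check that the residual ambiguity in the choice of the $v_i$ — coming from the non-uniqueness in Theorem~\ref{standard-milnor} when some $a_i$ coincide, and more generally from the action of the orthogonal symmetries of the Milnor bracket relations — does not change $\{a_1, a_2, a_3\}$ or $|d|$. For the $a_i$ this is exactly the uniqueness assertion already quoted. For $d$, I would note that any two standard Milnor bases of $(\mathfrak{z}^\perp, [\cdot,\cdot]')$ that are both $g$-orthogonal differ by an element of $\operatorname{SO}(3)$ acting on $(\mathfrak{z}^\perp, g|_{\mathfrak{z}^\perp})$ in a way compatible with the bracket (an automorphism of $\mathfrak{su}(2)$), and since $d^2$ was computed as $\|v_i\|_g^2 - \|u_i\|_g^2$ using only norms that are preserved under such an orthogonal change together with the corresponding change of the $f_i$, the value of $|d|$ is unchanged. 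I expect the main obstacle to be bookkeeping: carefully arguing that the $u_i$ produced from a different admissible $v_i$ are again a standard Milnor basis with $v_i - u_i$ still lying in $\mathfrak{z}$ and still $g$-orthogonal of common length $|d|$, i.e.\ that the whole decoupled structure transforms consistently. This is essentially the content of Lemma~\ref{skewed-basis} plus the observation (already recorded in the remark after Definition~\ref{decoupled-def}) that the later computations are basis-independent, so no genuinely new idea is needed beyond organizing these facts.
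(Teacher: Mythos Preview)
Your approach for the $a_i$ --- recognizing that $(v_1, v_2, v_3)$ is a $g$-orthogonal standard Milnor basis for the intrinsic Lie algebra $(\mathfrak{z}^\perp, [\cdot,\cdot]')$ and invoking Theorem~\ref{standard-milnor} directly --- is correct and arguably cleaner than the paper's route, which instead defines a linear map $A$ on $\mathfrak{su}(2)$ by $\langle Au, w\rangle_{g_0} = \langle u, w\rangle_g$ and reads off the $a_i^2 + d^2$ as its eigenvalues.

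However, your treatment of $d$ contains a concrete error. You assert that $h_i = v_i - u_i$ is the $g$-orthogonal projection of $v_i$ onto $\mathfrak{z}$, and use this to argue $h_i$ is intrinsic; but $v_i$ lies in $\mathfrak{z}^\perp$ by construction, so that projection is zero. What is true is that $h_i = d f_i$ is the $\mathfrak{z}$-component of $v_i$ in the \emph{non-orthogonal} Lie-algebra direct sum $\mathfrak{g} = \mathfrak{su}(2) \oplus \mathfrak{z}$, which is still intrinsic. Relatedly, your formula $d^2 = \|v_i\|_g^2 - \|u_i\|_g^2$ has the wrong sign: from $v_i \perp f_i$ one gets $\langle u_i, f_i\rangle_g = -d$, whence $\|u_i\|_g^2 = a_i^2 + d^2$ and $d^2 = \|u_i\|_g^2 - \|v_i\|_g^2$. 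With these fixes your argument can be made to work, though the claim that ``norms are preserved under such an orthogonal change'' still needs care: the relevant orthogonal transformation lives on $\mathfrak{z}^\perp$, while the $u_i$ do not, so one must actually use that the $u_i$ are $g$-orthogonal with common excess $\|u_i\|_g^2 - a_i^2 = d^2$. The paper avoids this bookkeeping by a different device for $d$: it computes $\langle u_i, f_j\rangle_g = -d\,\delta_{ij}$, expands $u_1'$ in the $g_0$-orthonormal basis $(u_i)$ and $f_1'$ in the $g$-orthonormal basis $(f_j)$, and applies Cauchy--Schwarz to obtain $|d'| \leqslant |d|$, hence equality by symmetry.
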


\begin{proof}
Let $(v_i, f_i), (v_i^\prime, f_i^\prime)$ be two decoupled Milnor bases for $g$, with corresponding parameters $(a_i, d), (a_i^\prime, d^\prime)$.   We show that $a_i = a_i^{\prime}$ up to reordering, and $|d| = |d^{\prime}|$.

Let $u_i := v_i - d f_i$, so that $(u_1, u_2, u_3)$ is a standard Milnor basis for $\mathfrak{su}(2)$, and define $u_i^\prime$ likewise.  By Proposition \ref{milnor-orthonormal}, the bases $(u_i)$ and $(u_i^\prime)$ are each  orthonormal for the $\operatorname{Ad}$-invariant metric $g_0$ (see Notation \ref{g0-def}),  so we can write $u_1^\prime = \sum_{i=1}^3 b_i u_i$ where $\sum_{i=1}^3 b_i^2 = 1$.  Likewise, since the $(f_i)$ are orthonormal for $g$, as are the $(f_i^\prime)$, we can write $f_1^\prime = \sum_{j=1}^3 c_j f_j$ with $\sum_{j=1}^3 c_j^2 = 1$.  Now, we have
\begin{equation*}
\langle u_i, f_j \rangle_{g} = \langle v_i - d f_i, f_j \rangle_{g} = -d \cdot \delta_{ij}
\end{equation*}
and so
\begin{align*}
  |d^\prime| &= \left|\langle u_1^\prime, f_1^\prime\rangle_{g}\right| = \left| \sum_{i,j=1}^3 b_i c_j \langle u_i, f_j\rangle_{g} \right| = \left|- d \sum_{i=1}^3 b_i c_i \right| \leqslant |d|
\end{align*}
by the Cauchy--Schwarz inequality.  The opposite inequality also holds by symmetry, so $|d^{\prime}| = |d|$; that is, $d^{\prime} = \pm d$.

Next, define the linear transformation $A$ of $\mathfrak{su}(2)$ by $\langle Au, w \rangle_{g_{0}} = \langle u, w\rangle_{g}$ for $u, w \in \mathfrak{su}(2)$.
Now we have
\begin{align*}
\langle A u_i, u_j \rangle_{g_{0}} &= \langle u_i, u_j\rangle_{g} = \langle v_i - d f_i, v_j - d f_j \rangle_{g} = (a_i^2 + d^2)\delta_{ij}
\end{align*}
so that $A u_i = (a_i^2 + d^2) u_i$.  That is, the $u_i$ are eigenvectors of $A$ with corresponding eigenvalues $a_i^2 + d^2$.  The same argument applies to the primed basis, and so, up to reordering, we must have $a_i^2 + d^2 = (a_i^\prime)^2 + d^2$  and thus $a_i = a_i^\prime$.

To get the final assertion, it is easily checked that if $(v_1, v_2, v_3, f_1, f_2, f_3)$ is a decoupled Milnor basis for $g$, with parameters $(a_1, a_2, a_3, d)$, then so are the following ordered bases:
\begin{itemize}
\item $(v_2, v_3, v_1, f_2, f_3, f_1)$, with parameters $(a_2, a_3, a_1, d)$;
\item $(-v_2, -v_1, -v_3, -f_2, -f_1, -f_3)$, with parameters $(a_2, a_1, a_3, d)$;
\item $(v_1, v_2, v_3, -f_1, -f_2, -f_3)$, with parameters $(a_1, a_2, a_3, -d)$.
\end{itemize}
Iterating these transformations as needed, we see that we can attain any ordering of the $a_i$, and either sign of $d$.
\end{proof}

The following proposition extends \cite[Corollary 2.10]{EldredgeGordinaSaloff-Coste2018}.

\begin{pro}\label{up-to-isometry}
On $G = \operatorname{SU}(2) \times \mathbb{R}^{3}$, for every set of  parameters $0 < a_1 \leqslant a_2 \leqslant a_3 < \infty$ and $d \geqslant 0$, there exists a decoupled left-invariant Riemannian metric $g$ with these parameters, and such $g$ is unique up to an isometric Lie group isomorphism.
\end{pro}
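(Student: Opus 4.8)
The statement has two parts: existence of a decoupled metric with prescribed parameters $(a_1, a_2, a_3, d)$, and uniqueness up to isometric isomorphism. For existence, I would simply write down a model. Fix a standard Milnor basis $(\widehat u_1, \widehat u_2, \widehat u_3)$ of $\mathfrak{su}(2)$ — say the Pauli matrices of Example~\ref{pauli-ex} — and a fixed orthonormal basis $(f_1, f_2, f_3)$ of $\mathbb{R}^3$. Set $v_i := \widehat u_i + d f_i$, and declare $(v_1, v_2, v_3, f_1, f_2, f_3)$ to be $g$-orthogonal with $\langle v_i, v_i\rangle_g = a_i^2$ and $\langle f_i, f_i\rangle_g = 1$. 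One must check this really defines an inner product: the Gram matrix of the basis $(\widehat u_1, \widehat u_2, \widehat u_3, f_1, f_2, f_3)$ with respect to $g$ is block-computable from the relations $v_i = \widehat u_i + d f_i$, $f_i \perp v_j$, and positive-definiteness follows because we are just declaring an explicit basis orthogonal with positive lengths. By Definition~\ref{decoupled-def} this $g$ is decoupled with parameters exactly $(a_1, a_2, a_3, d)$ (after using Proposition~\ref{parameters-well-defined} to arrange $a_1 \le a_2 \le a_3$, $d \ge 0$, though the model already has this if we feed in ordered data). Extend $g$ to a left-invariant metric on $G = \operatorname{SU}(2)\times\mathbb{R}^3$ in the usual way.

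**Uniqueness.** Suppose $g, g'$ are two decoupled left-invariant metrics on $G$ with the same parameters. Pick decoupled Milnor bases $(v_i, f_i)$ for $g$ and $(v_i', f_i')$ for $g'$; by Proposition~\ref{parameters-well-defined} we may assume both realize the parameters $(a_1, a_2, a_3, d)$ in the same order and with the same sign of $d$. Write $v_i = u_i + d f_i$, $v_i' = u_i' + d f_i'$, where $(u_i), (u_i')$ are standard Milnor bases of $\mathfrak{su}(2)$. The key algebraic fact is that any two standard Milnor bases of $\mathfrak{su}(2)$ differ by an element of $\operatorname{Aut}(\mathfrak{su}(2)) = \operatorname{SO}(3)$ (acting as $\operatorname{Ad}$ of $\operatorname{SU}(2)$ on $\mathfrak{su}(2)$) — this is exactly the content behind Theorem~\ref{standard-milnor}; concretely, the linear map $\varphi_0: \mathfrak{su}(2)\to\mathfrak{su}(2)$ sending $u_i \mapsto u_i'$ preserves the bracket relations \eqref{e.BasisLie Brackets}, hence is a Lie algebra automorphism. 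Define $\varphi: \mathfrak{g} \to \mathfrak{g}$ on $\mathfrak{g} = \mathfrak{su}(2)\oplus\mathbb{R}^3$ by $\varphi|_{\mathfrak{su}(2)} = \varphi_0$ and $\varphi(f_i) = f_i'$ (extended linearly on $\mathbb{R}^3$). Then $\varphi$ is a Lie algebra automorphism (it acts as an automorphism on $\mathfrak{su}(2)$, as an invertible linear map on the abelian ideal $\mathbb{R}^3$, and respects the direct sum since $[\mathfrak{su}(2), \mathbb{R}^3] = 0$). Moreover $\varphi(v_i) = \varphi(u_i) + d\varphi(f_i) = u_i' + d f_i' = v_i'$, so $\varphi$ carries the $g$-decoupled Milnor basis onto the $g'$-decoupled Milnor basis, preserving all the lengths and orthogonality relations; hence $\varphi$ is an isometry from $(\mathfrak{g}, g)$ to $(\mathfrak{g}, g')$. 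Finally, since $G = \operatorname{SU}(2)\times\mathbb{R}^3$ is simply connected, $\varphi$ integrates to a Lie group automorphism $\Phi: G \to G$ with $d\Phi_e = \varphi$, and by left-invariance of both metrics $\Phi$ is a Riemannian isometry.

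**Main obstacle.** The only genuinely delicate point is the claim that two standard Milnor bases of $\mathfrak{su}(2)$ are related by a Lie algebra automorphism that is simultaneously $g_0$-orthogonal — i.e. that $\varphi_0 \in \operatorname{SO}(3)$. That a bracket-preserving linear bijection is an automorphism is immediate; that it is automatically orthogonal for $g_0$ follows from Proposition~\ref{milnor-orthonormal}, which says every standard Milnor basis is $g_0$-orthonormal, so $\varphi_0$ maps a $g_0$-orthonormal basis to a $g_0$-orthonormal basis. A secondary bookkeeping point is making sure that after the normalizations of Proposition~\ref{parameters-well-defined} the \emph{ordered} parameter tuples genuinely agree (not just as unordered sets with $|d|$), so that $\varphi_0$ can be chosen to send $u_i \mapsto u_i'$ index-by-index rather than up to permutation; the three explicit basis transformations listed in the proof of Proposition~\ref{parameters-well-defined} let us arrange this. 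Everything else is routine verification of orthogonality and bracket relations, which I would not spell out in detail.
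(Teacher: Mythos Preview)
Your proof is correct and follows essentially the same route as the paper: build an explicit model for existence, and for uniqueness define the linear map sending one decoupled Milnor basis to the other, check it is both a Lie algebra automorphism and an isometry, then integrate using simple connectedness. The only cosmetic difference is that you define $\varphi$ via the direct-sum decomposition $\mathfrak{su}(2)\oplus\mathbb{R}^3$ (first $\varphi_0$ on $\mathfrak{su}(2)$, then $f_i\mapsto f_i'$), whereas the paper defines the same map directly on the basis $(v_i,f_i)$ and then reads off $\phi(u_i)=u_i'$; these are the same linear map. Your ``main obstacle'' paragraph about $\varphi_0$ being $g_0$-orthogonal is harmless but unnecessary: the isometry property you actually need is from $(\mathfrak{g},g)$ to $(\mathfrak{g},g')$, and that follows immediately from $\varphi$ carrying the $g$-orthonormal basis $(\tfrac{1}{a_i}v_i,f_i)$ to the $g'$-orthonormal basis $(\tfrac{1}{a_i}v_i',f_i')$, with no reference to $g_0$.
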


\begin{proof}
The existence is trivial.  Suppose numbers $0 < a_1 \leqslant a_2 \leqslant a_3 < \infty$ and $d \geqslant 0$ are given.  Let $(u_1, u_2, u_3)$ be any standard  Milnor basis of $\mathfrak{su}(2)$, and $(f_1, f_2, f_3)$ any orthonormal basis of $\mathbb{R}^3$.  Set $v_i = u_i + d f_i$.  Then $\{v_1, v_2, v_3, f_1, f_2, f_3\}$ are linearly independent, so there is an inner product $g$ for which the vectors $\{\frac{1}{a_i} v_i, f_i\}$ are orthonormal.  By construction, this $g$ has parameters $(a_1, a_2, a_3, d)$.

To show uniqueness, let $g, g^{\prime}$ be two decoupled left-invariant Riemannian metrics on $G = \operatorname{SU}(2) \times \mathbb{R}^3$, identified with inner products  on $\mathfrak{g} = \mathfrak{su}(2) \oplus \mathbb{R}^3$, and both  having parameters $a_1, a_2, a_3, d$.  Let $(v_1, v_2, v_3, f_1, f_2, f_3)$ be a decoupled Milnor basis for $g$, so that $u_i = v_i - d f$ are a standard Milnor basis for $\mathfrak{u}(2)$.  Define $v_i^{\prime}, f_i^{\prime}, u_i^{\prime}$ analogously for $g^{\prime}$.

Define a linear automorphism $\phi : \mathfrak{g} \to \mathfrak{g}$ by $\phi(v_i) = v_i^{\prime}$, $\phi(f_i) = f_i^{\prime}$, so that also $\phi(u_i) = u_i^{\prime}$.  Then $\phi$ is an isometry from $g$ to $g^{\prime}$, since it maps the $g$-orthonormal basis $( \frac{1}{a_i} v_i, f_i)$ to the $g^\prime$-orthonormal basis $(\frac{1}{a_i} v_i^\prime, f_i^\prime)$.
Moreover, $\phi$ is a Lie algebra automorphism of $\mathfrak{g}$.  To see this, note that $(u_i, f_i)$ is a basis for $\mathfrak{g}$.  Since the $u_i$ and $u_i^{\prime}$ are both standard Milnor bases, for any cyclic permutation $(i,j,k)$ of $(1,2,3)$, we have
\begin{equation*}
[\phi(u_i), \phi(u_j)] = [u_i^{\prime}, u_j^{\prime}] = u_k^{\prime} = \phi(u_k) =\phi([u_i, u_j]).
\end{equation*}
And for all $i,j = 1,2,3$ we have
\begin{align*}
[\phi(u_i), \phi(f_j)] &= [u_i^{\prime}, f_j^{\prime}] = 0 = \phi([u_i, f_j]) 
\\
[\phi(f_i), \phi(f_j)] &= [f_i^{\prime}, f_j^{\prime}] = 0 = \phi([f_i, f_j]).
\end{align*}
Now since $G = \operatorname{SU}(2) \times \mathbb{R}^3$ is connected and simply connected, the Lie algebra automorphism $\phi : \mathfrak{g} \to \mathfrak{g}$ lifts to a Lie group automorphism $\Phi: G \to G$ with $d\Phi_e = \phi$.  As $\phi$ is an isometry and  the metrics $g, g^{\prime}$ are both left-invariant, we have that $\Phi$ is a Riemannian isometry.
\end{proof}

\begin{rem}\label{no-basis-dependence}
  As a consequence of Proposition~\ref{up-to-isometry}, we could, whenever convenient, assume without loss of generality that the  vectors $u_i$ in a decoupled Milnor basis are the Pauli matrices $\widehat{u}_i$, and the vectors $f_i$ are the standard Euclidean  basis for $\mathbb{R}^3$.  This also shows that the dependence of geometric  quantities such as doubling constant, volume growth function, etc., on an arbitrary decoupled metric $g$  can be expressed in terms of the parameters $a_1, a_2, a_3, d$.
\end{rem}

\section{Statement of volume estimates}\label{volume-statement}

With the notation of the previous section, we can now state the volume estimate which is the main technical theorem of this paper.  To state it more concisely, we introduce some auxiliary notation.

\begin{notation}\label{m-rho-def}
  Given a decoupled left-invariant metric $g$ having parameters $(a_1, a_2, a_3, d)$ and a constant $\eta > 0$, let
  \begin{align}
    m_i &= m_i(r, a_i, \eta) = \min\left(\frac{r}{a_i}, \eta\right) \label{mi-def} \\ 
    \rho_i &=  \rho_i(r, a_1, a_2, a_3, \eta) = m_j m_k + m_i m_j^2 + m_i m_k^2 \label{rhoi-def}
  \end{align}
  where $(i,j,k)$ denotes any cyclic permutation of the indices $(1,2,3)$.
\end{notation}

The quantity $m_i$ estimates the maximum $u_i$ component of the path $e^{t v_i}$, $0 \leqslant t \leqslant r/a_i$, whose $g$-length is $r$.  Because of the compactness of $\mathrm{SU}(2)$, it cuts off when $r/a_i$ exceeds some value $\eta$.  The terms of $\rho_i$ correspond to the bracket relations
\begin{equation}
  u_i = [v_j, v_k] = [v_j, [v_i, v_j]] = [[v_k, v_i], v_k].
\end{equation}

Let $\mu_0$ denote the normalized Haar measure on $\operatorname{SU}(2) \times \mathbb{R}$ as defined in Notation~\ref{g0-def}. 

\begin{theo}\label{volume-estimate}
There are universal constants $0 < c < C < \infty$ and $\eta > 0$ such that for every decoupled left-invariant Riemannian metric $g \in \mathcal{L}_{\operatorname{dec}}(\operatorname{SU}(2) \times \mathbb{R}^{3})$, we have
\begin{equation*}
 c \overline{V}_g(r) \leqslant \mu_0(B_g(r)) \leqslant C \overline{V}_g(r),
\end{equation*}
where
\begin{align*}
  \overline{V}_g(r) &:= \prod_{i=1}^3 \min\left(d \rho_i \frac{r}{a_i} + \rho_i r + \frac{r^2}{a_i}, d \frac{r}{a_i} + r\right)
\end{align*}
where $(a_1, a_2, a_3, d)$ are the parameters of the metric $g$ as defined in Definition \ref{parameters}, and $\rho_i = \rho_i(r, a_1, a_2, a_3, \eta)$ is as defined as in Notation \ref{m-rho-def}.
\end{theo}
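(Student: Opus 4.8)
The plan is to establish the two-sided estimate by constructing, for each radius $r>0$ and each decoupled metric $g$ with parameters $(a_1,a_2,a_3,d)$, an explicit "box-like" subset of $\operatorname{SU}(2)\times\mathbb{R}^3$ expressed in a convenient coordinate system, whose $\mu_0$-volume is (up to universal constants) equal to $\overline{V}_g(r)$, and which is sandwiched between $B_g(r/\kappa)$ and $B_g(\kappa r)$ for some universal $\kappa$. Since a two-sided volume bound up to universal constants with a controlled change of radius is equivalent to the stated estimate (the volume growth function $r\mapsto\mu_0(B_g(r))$ being monotone, it suffices to absorb the radius distortion into $c,C$ once we know $\overline V_g$ is roughly doubling-stable in $r$, which follows from its explicit form), this reduces everything to geometric bookkeeping. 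First I would fix the reference basis (using Remark~\ref{no-basis-dependence}, take $u_i=\widehat u_i$ the Pauli matrices and $f_i$ the standard basis of $\mathbb{R}^3$), and set up the coordinate chart on $\operatorname{SU}(2)\times\mathbb{R}^3$ promised in Section~\ref{coordinates-sec}: roughly, $(t_1,t_2,t_3,s_1,s_2,s_3)\mapsto e^{t_1 v_1}e^{t_2 v_2}e^{t_3 v_3}\cdot(\text{correction in the }\mathbb{R}^3\text{ factor})$, chosen so that the $\mathrm{SU}(2)$-component is governed by products of one-parameter subgroups $e^{t_i u_i}$ and the $\mathbb{R}^3$-component records the accumulated $d\sum t_i f_i$ together with the independent central directions.

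The heart of the argument is the interplay between the $\mathrm{SU}(2)$ distance and the central ($\mathbb{R}^3$) displacement. Moving a $g$-distance $r$ along $e^{t v_i}$ costs $t a_i$ in length, advances the $u_i$-component by $\approx\min(t,\text{something})$ but, crucially, because $\operatorname{SU}(2)$ is compact the reachable $u_i$-displacement saturates at a universal constant $\eta$ — this is exactly what the truncation $m_i=\min(r/a_i,\eta)$ in Notation~\ref{m-rho-def} encodes. Meanwhile each such move also shifts the central coordinate by $\approx d\cdot t$ in the $f_i$ direction. To reach a point whose $u_i$-component is of order $\rho_i$ but which is "small" in the directions along which direct motion is cheap, one must use the bracket-generating relations $u_i=[v_j,v_k]=[v_j,[v_i,v_j]]=[[v_k,v_i],v_k]$, i.e. commutators of the $e^{tv_\ell}$; the three terms $m_j m_k$, $m_i m_j^2$, $m_i m_k^2$ in $\rho_i$ are precisely the displacements achievable by iterated commutators of the corresponding one-parameter subgroups, and the exponential-map identities of Section~\ref{identities-sec} (the analogues of \cite[Section~2.4]{EldredgeGordinaSaloff-Coste2018}) make this quantitative. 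Combining the "straight" reach $m_i$ with the "commutator" reach $\rho_i$, and tracking that commutator words of $g$-length $O(r)$ produce central displacement of order $d\rho_i r/a_i+\rho_i r$ (commutator overhead times $d$, plus the intrinsic central spread) versus pure central motion of order $r$ — this is the origin of the $\min$ of the two expressions and of each factor $d\rho_i\frac{r}{a_i}+\rho_i r+\frac{r^2}{a_i}$ vs.\ $d\frac{r}{a_i}+r$ in $\overline V_g(r)$. The factor $r^2/a_i$ comes from the second-order spread of the $e^{tv_i}$ flow itself.

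Concretely I would carry out the proof in two halves, matching Sections~\ref{lower-sec} and~\ref{upper-sec}. For the lower bound, I would exhibit an explicit word $w(t_1,\dots,t_N)$ in the one-parameter subgroups $e^{\pm t v_i}$ — a product of a bounded number of factors, some "straight" and some arranged into commutators realizing the three terms of each $\rho_i$ — show that for parameters ranging over an explicit box the total $g$-length stays $\le r$, and that the image box has $\mu_0$-volume $\gtrsim \overline V_g(r)$; here one uses that in the chosen coordinates the Jacobian of $w$ with respect to $\mu_0$ is bounded below on the box, which reduces to the Pauli-matrix identities $u_iu_j=\tfrac12 u_k$ from Proposition~\ref{milnor-orthonormal}. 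For the upper bound I would go the other way: given any point at $g$-distance $\le r$ from $e$, write it in the coordinate chart and estimate each coordinate from above — the $\mathrm{SU}(2)$ coordinates by $\lesssim m_i$ from the compactness/length constraint, and $\lesssim\rho_i$ by noting that the "cheap" directions $v_i$ only contribute $u_i$-displacement of order $m_i$ while any excess must have been produced by commutators subject to the same bracket bounds, and the central coordinates by $\lesssim d\rho_i r/a_i+\rho_i r+r^2/a_i$ (or $\lesssim dr/a_i+r$) by the same accounting — so that $B_g(r)$ is contained in a box of $\mu_0$-volume $\lesssim\overline V_g(r)$. The main obstacle, and where I expect the real work to lie, is the upper bound: showing that an \emph{arbitrary} length-$\le r$ path cannot reach further in the $u_i$ and central directions than the commutator-word construction does — i.e.\ an optimality statement for the reach of bracket-generating motions in $\operatorname{SU}(2)$ with a sharp, parameter-uniform control of the constants, robust as $a_1/a_3\to 0$, $d\to\infty$, etc. This is the step that requires the "more detailed" exponential-map identities flagged in the introduction, and it is where naive Ball--Box-type estimates (which lose dimension-dependent constants) are not good enough; one needs the specific algebraic structure of $\mathfrak{su}(2)$ to pin down the exponents $1,2,2$ appearing in $\rho_i$ and to keep all constants universal.
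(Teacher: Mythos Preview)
Your outline matches the paper's at the top level---lower bound via commutator words in the $v_i$, upper bound via pathwise coordinate estimates, and absorbing radius distortions using that $\overline V_g$ is itself doubling---but several of the specific mechanisms you describe are off in ways that would derail the execution.

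The paper's coordinate chart (Notation~\ref{Psi-def}) uses products $e^{x_i u_i}e^{y_i f_i}$, not $e^{t_i v_i}$. The point is that each pair $(x_i,y_i)\in\mathbb S\times\mathbb R$ is written as $x_i=\mu_i+\nu_i$, $y_i=d\nu_i+\xi_i$ with $|\mu_i|\lesssim\rho_i$ (reach by commutators), $|\nu_i|\le r/a_i$ (direct $v_i$ motion), $|\xi_i|\le r$ (direct $f_i$ motion). Three bounded parameters mapping to two coordinates gives a \emph{hexagon} $H$ (Notation~\ref{H-notation}), and each factor of $\overline V_g(r)$ is the area of that hexagon wrapped onto the cylinder $\mathbb S\times\mathbb R$. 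The $\min$ is not a competition between two cost mechanisms: it is the geometric fact that once the hexagon wraps around the circle its area becomes circumference times height $\asymp d\,r/a_i+r$ (Lemma~\ref{H-volume-estimate}). Relatedly, your description of central displacement is inverted. The commutator words in the $v_i$ (Corollary~\ref{identity-with-v}) are arranged so that the exponents of each $v_i$ sum to zero; since the $f_i$ are central, these words have \emph{zero} $\mathbb R^3$-displacement---that is precisely what makes $\mu_i$ and $\nu_i$ independent. The terms $d\rho_i r/a_i$, $\rho_i r$, $r^2/a_i$ are the three cross-term areas $d\mu^*\nu^*$, $\mu^*\xi^*$, $\nu^*\xi^*$ of the hexagon, not displacements or second-order spreads.

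The balance of difficulty is also the reverse of what you expect. The lower bound is where the $\mathfrak{su}(2)$ algebra really enters: one needs the \emph{exact} closed-form identity of Theorem~\ref{long-identity}, $e^{-\tau v_2}e^{sv_1/2}e^{tv_2}e^{-sv_1}e^{-tv_2}e^{sv_1/2}e^{\tau v_2}=e^{f(s,t)u_3}$, with no remainder, iterated once more for the triple-bracket terms (Lemmas~\ref{identity-one-bracket}--\ref{identity-two-brackets}). The upper bound (Theorem~\ref{ball-contain-outer}) is comparatively mechanical: solve the ODE~\eqref{e.MC-ODE} for the second-kind coordinates $x_i(t)$ along an arbitrary path, subtract the direct part $\nu_i=\int\alpha_i$, and read off $|\mu_i|=|x_i-\nu_i|\le C\rho_i$ from the size of the matrix entries. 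No optimality statement for bracket-generating reach is needed; the case $r\ge\eta a_2$ is dispatched by the trivial Proposition~\ref{linear-upper}.
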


\begin{rem}
  The value of the universal constant $\eta$ is actually immaterial in the statement of Theorem \ref{volume-estimate}, as it may be absorbed into the constants $c,C$.  We include it in the statement to match the structure of the proof, but for purposes of applying Theorem \ref{volume-estimate}, one could replace $\eta$ with $1$ or any other fixed positive number.  
\end{rem}

The proof of Theorem \ref{volume-estimate} occupies the remainder of the paper, as we outlined in the introduction.

To see that Theorem~\ref{volume-estimate} implies uniform doubling, it suffices to verify that we have $\overline{V}_g(2r) \leqslant \overline{D} \cdot \overline{V}_g(r)$ for some constant $\overline{D}$ indepedent of $r, a_1, a_2, a_3, d$.   This can be done explicitly, but it is tedious, so instead we use a more concise abstract argument to show that such $\overline{D}$ exists.

For the following, let $\preceq$ denote the domination partial ordering on $(\mathbb{R}^+)^k$, i.e. $(x_1, \dots, x_k) \preceq (y_1, \dots, y_k)$ if and only if $x_i \leqslant y_i$ for all $i=1,\dots,k$.  When $k=1$ this is the usual total ordering on $\mathbb{R}^+$.  Also, let
$\Theta$ be an arbitrary set, thought of as a space of parameters $\theta$.  For the purposes of Theorem \ref{volume-estimate}, we can take $\Theta = \mathbb{R}^4$, with $\theta = (a_1, a_2, a_3, d)$ representing the parameters of a metric
$g$.

\begin{defin}
Suppose $f : (\mathbb{R}^+)^k \times \Theta \to (\mathbb{R}^+)^m$ is monotone increasing in its first argument, i.e. whenever $x \preceq  y$ we have $f(x; \theta) \preceq f(y; \theta)$ for each $\theta \in \Theta$.  We say $f$ is \textbf{uniformly doubling} if there is a  constant $D_f$ such that $f(2x; \theta) \preceq D_f f(x; \theta)$ for all $x \in (\mathbb{R}^+)^k$ and all $\theta \in \Theta$.
\end{defin}

\begin{lem}\label{unif-doubling-abstract}
The following functions are uniformly doubling. 
\begin{enumerate}
\item Any function $f(x ; \theta) = g(\theta)$ that depends only on $\theta$, with $D_f = 1$.
\item The sum function $f(x_1, \dots, x_k) = x_1 + \dots + x_k$  with $D_f = 2$.  Here we take $m=1$ and the function $f$ does not depend on $\theta$.
\item The product function $f(x_1, \dots, x_k) = x_1 \cdots x_k$ with $D_f = 2^k$.
\item The minimum function $f(x_1, \dots, x_k) = \min(x_1, \dots, x_k)$, with $D_f = 2$, and the maximum function likewise.
\item For $i=1,\dots, n$, suppose we have functions $f_i : (\mathbb{R}^+)^k \times \Theta \to (\mathbb{R}^+)^{m_i}$, each of which is uniformly doubling with some constant $D_i$.  Let $m = m_1 + \dots + m_n$.  Then the
   couple function $f : (\mathbb{R}^+)^k \times \Theta \to (\mathbb{R}^+)^m$ defined by $f(x; \theta) = (f_1(x ; \theta), \dots, f_m(x ; \theta))$ is uniformly doubling with constant $D_f = \max(D_1, \dots, D_m)$.
\item Suppose $g : (\mathbb{R}^+)^k \times \Theta \to (\mathbb{R}^+)^\ell$ and $h : (\mathbb{R}^+)^\ell \times \Theta \to (\mathbb{R}^+)^m$ are each uniformly doubling.  Then the composition $f(x; \theta) = h(g(x;\theta);\theta)$ is uniformly doubling  with constant $D_f \leqslant D_h^{\lceil \log_2 D_g \rceil}$.
\item If $f,g : (\mathbb{R}^+)^k \times \Theta \to (\mathbb{R}^+)^m$ are monotone increasing, $g$ is uniformly doubling with constant $D_g$, and there are constants $c, C$ such that $cg \leqslant f \leqslant Cg$, then $f$ is uniformly doubling with constant $D_f \leqslant \frac{C}{c} D_g$.
  \end{enumerate}
\end{lem}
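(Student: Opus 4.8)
The plan is to verify each of the seven claims essentially by direct computation, since each concerns an explicit elementary operation. For (1), a constant-in-$x$ function satisfies $f(2x;\theta)=f(x;\theta)$, so $D_f=1$ works. For (2), monotonicity in $x$ is clear, and $f(2x_1,\dots,2x_k)=2(x_1+\dots+x_k)=2f(x)$, giving $D_f=2$. For (3), $f(2x_1,\dots,2x_k)=2^k\,x_1\cdots x_k=2^k f(x)$, so $D_f=2^k$; monotonicity holds because all entries lie in $\mathbb{R}^+$. For (4), $\min(2x_1,\dots,2x_k)=2\min(x_1,\dots,x_k)$ and likewise for $\max$, so $D_f=2$ in both cases; monotonicity of $\min$ and $\max$ under $\preceq$ is immediate. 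In each of these cases the only mild subtlety is to record that the output is a single coordinate ($m=1$) except for the product and minimum, where $m=1$ as well — so the domination ordering reduces to the usual one on $\mathbb{R}^+$ and there is nothing extra to check.

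For (5), the coupling, suppose each $f_i$ satisfies $f_i(2x;\theta)\preceq D_i f_i(x;\theta)$. Writing $f=(f_1,\dots,f_n)$ and $D=\max_i D_i$, we get coordinate-by-coordinate $f(2x;\theta)\preceq D\,f(x;\theta)$, since $D_i f_i(x;\theta)\preceq D f_i(x;\theta)$ as all quantities are nonnegative; monotonicity of $f$ follows from monotonicity of each $f_i$. (I note the statement writes $D_f=\max(D_1,\dots,D_m)$ but clearly means $\max(D_1,\dots,D_n)$, matching the number of factor functions.)

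For (6), the composition: assume $g(2x;\theta)\preceq D_g\,g(x;\theta)$ and $h(2y;\theta)\preceq D_h\,h(y;\theta)$, with $g,h$ monotone. Set $N=\lceil\log_2 D_g\rceil$, so $D_g\le 2^N$. Then $g(2x;\theta)\preceq D_g\,g(x;\theta)\preceq 2^N g(x;\theta)$, and applying $h$ — which is monotone and which doubles its argument at the cost of a factor $D_h$ — iteratively $N$ times yields
\begin{equation*}
  h(g(2x;\theta);\theta)\preceq h(2^N g(x;\theta);\theta)\preceq D_h^{\,N}\,h(g(x;\theta);\theta),
\end{equation*}
so $D_f\le D_h^{\lceil\log_2 D_g\rceil}$; monotonicity of $f=h\circ g$ is clear. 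Here one must be slightly careful that $h$ is monotone \emph{in its first argument} so that the inequality $g(2x;\theta)\preceq 2^N g(x;\theta)$ propagates through $h$ with the second argument $\theta$ held fixed — this is exactly the hypothesis. For (7), if $cg\preceq f\preceq Cg$ (interpreted coordinatewise) and $g$ is uniformly doubling with constant $D_g$, then
\begin{equation*}
  f(2x;\theta)\preceq C\,g(2x;\theta)\preceq C D_g\,g(x;\theta)\preceq \tfrac{C}{c}D_g\,f(x;\theta),
\end{equation*}
using $g(x;\theta)\preceq \tfrac1c f(x;\theta)$ in the last step; monotonicity of $f$ is assumed. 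I do not expect any genuine obstacle here: the entire lemma is a bookkeeping exercise, and the only place demanding care is (6), where the rounding $\lceil\log_2 D_g\rceil$ and the requirement that the number of iterations of $h$ be an integer must be handled precisely, together with the observation that monotonicity of $h$ in its first argument is what allows the multiplicative constant for $g$ to be pushed inside $h$.
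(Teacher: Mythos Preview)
Your proposal is correct and is precisely the direct verification the paper has in mind; in fact the paper's proof consists of the single sentence ``The proofs are straightforward,'' so your write-up is more detailed than what appears there. Your observation about the typo in (5) (it should be $\max(D_1,\dots,D_n)$) and your care in (6) with the integer $N=\lceil\log_2 D_g\rceil$ are both well taken.
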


The proofs are straightforward.

Since the function $\overline{V}_g(r) = \overline{V}(r ; a_1, a_2, a_3, d)$ is written as a composition of addition, multiplication, and minimum for $r>0$ and the non-negative
parameters $a_1, a_2, a_3, d$, it follows from Lemma \ref{unif-doubling-abstract} that $\overline{V}_g(r)$ is a uniformly doubling function, and hence so is $\mu_0(B_g(r))$.  We thus conclude the following.

\begin{cor}\label{decoupled-uniformly-doubling}
The class $\mathcal{L}_{\operatorname{dec}}(\operatorname{SU}(2) \times\mathbb{R}^{3})$ of decoupled left-invariant Riemannian metrics on $\operatorname{SU}(2) \times \mathbb{R}^3$ is uniformly doubling.
\end{cor}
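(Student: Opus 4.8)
\textit{Proof proposal.} The plan is to derive the corollary directly from Theorem~\ref{volume-estimate} together with the closure properties recorded in Lemma~\ref{unif-doubling-abstract}. The key observation is that, with the parameters $\theta = (a_1,a_2,a_3,d)$ playing the role of the auxiliary variable and $r$ the distinguished variable, the comparison function $\overline{V}_g(r) = \overline{V}(r;\theta)$ appearing in Theorem~\ref{volume-estimate} is assembled from a handful of ``atomic'' uniformly doubling functions using only the operations (sum, product, minimum, couple, composition) that Lemma~\ref{unif-doubling-abstract} shows preserve uniform doubling.

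First I would check this assembly explicitly. The map $r\mapsto r$ is uniformly doubling, hence so are $r\mapsto r/a_i$ and $r\mapsto r^2/a_i$; any quantity depending only on $\theta$, such as $r\mapsto d$, is uniformly doubling with constant $1$ by part~(1) of Lemma~\ref{unif-doubling-abstract}. Therefore each $m_i = \min(r/a_i,\eta)$ is a minimum of a uniformly doubling function and a constant, hence uniformly doubling; then $\rho_i = m_j m_k + m_i m_j^2 + m_i m_k^2$ is a finite sum of products of the $m_i$'s, so uniformly doubling; and each factor $\min\!\bigl(d\rho_i\tfrac{r}{a_i}+\rho_i r + \tfrac{r^2}{a_i},\; d\tfrac{r}{a_i}+r\bigr)$ is a minimum of two finite sums of products of uniformly doubling functions, hence uniformly doubling. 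Finally $\overline{V}_g(r)$ is a product of three such factors, so by part~(3) it too is uniformly doubling: there is a constant $\overline{D}$, independent of $r$ and of $\theta$, with $\overline{V}_g(2r)\leqslant \overline{D}\,\overline{V}_g(r)$ for every $r>0$ and every decoupled $g$ (each decoupled metric having well-defined parameters by Proposition~\ref{parameters-well-defined}).

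Next I would feed this into Theorem~\ref{volume-estimate}, which gives $c\,\overline{V}_g(r)\leqslant \mu_0(B_g(r))\leqslant C\,\overline{V}_g(r)$ with universal $c,C$. By part~(7) of Lemma~\ref{unif-doubling-abstract}, the function $r\mapsto \mu_0(B_g(r))$ is then uniformly doubling with constant at most $\tfrac{C}{c}\overline{D}$, uniformly over $g\in\mathcal{L}_{\operatorname{dec}}(\operatorname{SU}(2)\times\mathbb{R}^3)$. The remaining point is that $D_g$ is defined using the Riemannian volume measure $\mu_g$ rather than $\mu_0$; but $\operatorname{SU}(2)\times\mathbb{R}^3$ is unimodular, so $\mu_g$ is a (metric-dependent) positive multiple of the bi-invariant measure $\mu_0$, whence $\mu_g(B_g(2r))/\mu_g(B_g(r)) = \mu_0(B_g(2r))/\mu_0(B_g(r))$ and thus $D_g = \sup_{r>0}\mu_0(B_g(2r))/\mu_0(B_g(r)) \leqslant \tfrac{C}{c}\overline{D}$. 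Since this bound is independent of $g$, the family $\mathcal{L}_{\operatorname{dec}}(\operatorname{SU}(2)\times\mathbb{R}^3)$ is uniformly doubling.

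Because Theorem~\ref{volume-estimate} is granted, there is no substantive difficulty here; the only care needed is (i) verifying that the formula for $\overline{V}_g$ really does parse entirely as sums, products and minima of the atoms above --- in particular that terms like $d\rho_i\tfrac{r}{a_i}$ are genuinely products of a $\theta$-only factor and $r$-uniformly-doubling factors, and that the cutoff constant $\eta$ enters only through part~(1) --- and (ii) the bookkeeping, via unimodularity, that relates the abstract ``uniformly doubling function'' $\mu_0(B_g(\cdot))$ to the geometric conclusion that the family of metrics is uniformly doubling.
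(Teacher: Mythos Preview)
Your proposal is correct and follows essentially the same route as the paper: observe that $\overline{V}_g$ is built from sums, products, and minima of the atomic functions covered by Lemma~\ref{unif-doubling-abstract}, conclude it is uniformly doubling, and then invoke Theorem~\ref{volume-estimate} together with part~(7) to transfer this to $\mu_0(B_g(\cdot))$. Your treatment is in fact more careful than the paper's one-line argument, particularly in making explicit the harmless passage from $\mu_0$ to $\mu_g$ via unimodularity.
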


Together with the reduction from Theorem~\ref{doubling-reduction}, this implies Theorem \ref{main-intro}.

\section{Milnor basis identities in $\operatorname{SU}(2)$}\label{identities-sec}

In order to approximate the metric balls in $\operatorname{SU}(2) \times
\mathbb{R}^3$, we will make extensive use of the algebraic structure of $\operatorname{SU}(2)$ and its Lie algebra $\mathfrak{su}(2)$.  In this section, we derive several identities and formulas that will be
useful.

At the heart of these computations is the fact that a standard Milnor basis $(u_1, u_2, u_3)$ for $\mathfrak{su}(2)$ satisfies the matrix identities
\begin{equation}
  u_i^2 = -\frac{1}{4} I, \qquad u_i u_j = \frac{1}{2} u_k, \qquad u_i
  u_j + u_j u_i = 0
\end{equation}
for any cyclic permutation $(i,j,k)$ of the indices $(1,2,3)$ by \cite[Lemma 2.7]{EldredgeGordinaSaloff-Coste2018}.  In fact, most of these computations depend on the structure of $\mathfrak{su}(2)$ \emph{only} through these relations, and so we change our notation to make this clear.

\begin{notation}\label{XYZ-notation}
In this section, $X,Y,Z$ denote real or complex square matrices, of some arbitrary dimension, which satisfy the following relations.

\begin{equation}\label{XYZ-relations}
\begin{gathered}
XY = \frac{1}{2} Z, \qquad YZ = \frac{1}{2}X, \qquad ZX = \frac{1}{2} Y,
\\
XY+YX = YZ+ZY = ZX+XZ = 0, 
\\
X^2 = Y^2 = Z^2 = -\frac{1}{4} I.
\end{gathered}
\end{equation}
Note this implies that $[X,Y]=Z$, $[Y,Z]=X$, $[Z,X]=Y$.
\end{notation}

\subsection{Closed form expressions}
We start with proving several identities that might serve as a replacement for using the Baker-Campbell-Dynkin-Hausdorff formula. The next statement is similar to \cite[Lemma 2.17]{EldredgeGordinaSaloff-Coste2018}.

\begin{pro}\label{p.adjoint} For matrices $X,Y$ as in Notation  ~\ref{XYZ-notation}, we have
\[
e^{-sY} X e^{sY}=\cos s X + \sin s [X, Y], s \in \mathbb{R}.
\]
\end{pro}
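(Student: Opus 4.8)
The plan is to verify the identity by showing both sides satisfy the same first-order linear ODE in $s$ with the same initial condition at $s=0$. Set $F(s) = e^{-sY} X e^{sY}$ and $G(s) = \cos s\, X + \sin s\, [X,Y]$. Clearly $F(0) = X = G(0)$, since $\cos 0 = 1$ and $\sin 0 = 0$. Differentiating, $F'(s) = e^{-sY}(-YX + XY)e^{sY} = e^{-sY}[X,Y]e^{sY}$, which using the relation $[X,Y] = Z$ from Notation~\ref{XYZ-notation} equals $e^{-sY} Z e^{sY}$. So I would first compute $e^{-sY} Z e^{sY}$: by the same device its derivative is $e^{-sY}[Z,Y]e^{sY} = -e^{-sY} X e^{sY} = -F(s)$, using $[Z,Y] = -[Y,Z] = -X$. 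Hence $F'(s) = e^{-sY} Z e^{sY}$ and $(e^{-sY} Z e^{sY})' = -F(s)$, giving the closed second-order system $F'' = -F$ with $F(0) = X$, $F'(0) = Z$. The unique solution is $F(s) = \cos s\, X + \sin s\, Z = \cos s\, X + \sin s\, [X,Y] = G(s)$.

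An alternative, perhaps cleaner, route avoids ODEs entirely and proceeds by a direct power-series manipulation. Since $Y^2 = -\tfrac14 I$, one has $e^{\pm sY} = \cos(s/2) I \pm 2\sin(s/2) Y$ (this is the standard consequence of $Y$ squaring to a negative multiple of the identity, exactly as for the quaternion/Pauli matrices — I would record it as a preliminary observation, or cite the analogous formula from \cite[Section 2.4]{EldredgeGordinaSaloff-Coste2018}). Then I would simply multiply out
\[
e^{-sY} X e^{sY} = \bigl(\cos(s/2) I - 2\sin(s/2) Y\bigr) X \bigl(\cos(s/2) I + 2\sin(s/2) Y\bigr),
\]
expand into four terms, and simplify using $YX = -XY$ (so the two cross terms combine rather than cancel, yielding $\cos(s/2)\sin(s/2)$ times a commutator) and $YXY = -XY^2 = \tfrac14 X$. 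Collecting terms and applying the double-angle identities $\cos^2(s/2) - \tfrac14\cdot 4\sin^2(s/2) = \cos s$ and $4\cos(s/2)\sin(s/2) = 2\sin s$ (together with $XY = [X,Y]/2 \cdot 2$, i.e. recognizing $2XY = \ldots$, careful bookkeeping needed here) gives the claimed right-hand side.

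I expect the main obstacle to be purely a matter of bookkeeping rather than of ideas: in the power-series approach one must track the factors of $\tfrac12$ and $2$ correctly (the Milnor normalization $XY = \tfrac12 Z$ rather than $XY = Z$ is exactly the source of the $2\sin(s/2)$ in $e^{sY}$, and these must cancel cleanly against each other), and in the ODE approach one must be careful that $[Z,Y] = -X$ (not $+X$), since an erroneous sign would turn the oscillatory solution into an exponential one. Either way the proof is short. I would present the ODE version as the main argument, since it is the most transparent and most robust to sign errors, and it also sets the template for the analogous identities that will follow (e.g. computing $e^{-sY} X e^{sY}$ is the base case for the more elaborate products of one-parameter subgroups referenced in Notation~\ref{m-rho-def}).
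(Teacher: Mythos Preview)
Your main argument is correct and is essentially identical to the paper's proof: both define $F(s)=e^{-sY}Xe^{sY}$, differentiate twice, use $[[X,Y],Y]=-X$ (equivalently $[Z,Y]=-X$) to obtain the second-order equation $F''=-F$, and match initial data $F(0)=X$, $F'(0)=[X,Y]$ with those of $\cos s\,X+\sin s\,[X,Y]$. Your alternative direct expansion via $e^{\pm sY}=\cos(s/2)I\pm 2\sin(s/2)Y$ is a valid independent route that the paper does not take here (though it uses Rodrigues' formula elsewhere), but since you present the ODE version as primary, your proof and the paper's coincide.
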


\begin{proof}
Denote $f\left( s \right):=e^{-sY} X e^{sY}$ and $g\left( s \right):=\cos s X + \sin s [X, Y]$. Then

\begin{align*}
& f\left( s \right)=e^{-sY} X e^{sY},  f\left( 0 \right)=X,
\\
& f^{\prime}\left( s \right)=e^{-sY} [X, Y] e^{sY},  f^{\prime}\left( 0 \right)=[X, Y],
\\
& f^{\prime \prime}\left( s \right)=e^{-sY} [[X, Y], Y] e^{sY},
\end{align*}
and
\begin{align*}
& g\left( s \right)=\cos s X + \sin s [X, Y],  g\left( 0 \right)=X,
\\
& g^{\prime}\left( s \right)=-\sin s X + \cos s [X, Y],  g^{\prime}\left( 0 \right)=[X, Y],
\\
& g^{\prime \prime}\left( s \right)=-\cos s X - \sin s [X, Y].
\end{align*}
Note that

\begin{align*}
& f^{\prime \prime}\left( s \right)=e^{-sY} [[X, Y], Y] e^{sY}=-e^{-sY} X e^{sY} =-f\left( s \right).
\end{align*}
Also

\begin{align*}
& g^{\prime \prime}\left( s \right)=-g\left( s \right).
\end{align*}
Thus $f$ and $g$ are solutions to the same initial value problem for a second order differential equation, and therefore they are equal.
\end{proof}

\begin{rem}
Proposition~\ref{p.adjoint} is valid for any two elements $X,Y$ of an arbitrary Lie algebra which satisfy $[[X,Y], Y]=-X$.
\end{rem}

\begin{cor}\label{cor.5.4}
\begin{align*}
& e^{-sY} e^{tX} e^{sY}=\sum_{n=0}^{\infty} \frac{t^{n}}{n!}\left(e^{-sY} X e^{sY} \right)^{n}=\exp\left( t \left( \cos s X + \sin s [X, Y] \right) \right), s, t \in \mathbb{R}.
\end{align*}
\end{cor}

We need expanded versions of several identities for standard Milnor bases based on \cite{EldredgeGordinaSaloff-Coste2018}.  In particular, \cite[Lemma 2.7]{EldredgeGordinaSaloff-Coste2018} states that for any standard Milnor basis $( u_{1}, u_{2}, u_{3})$ defined by Definition~\ref{d.MilnorBasis} we have

\begin{align*}
& u_{i}^{2}=-\frac{1}{4}I,
\\
& u_{i}u_{j}=\frac{1}{2}u_{k},
\\
& u_{i}u_{j}+u_{j}u_{i}=0
\end{align*}
where $\left(i, j, k \right)$ is  any cyclic permutation of the indices  $\left(1, 2, 3 \right)$ and $i\not=j$. In addition we have the following Rodrigues' formula \cite[Lemma 2.13, Remark 2.14]{EldredgeGordinaSaloff-Coste2018}

\begin{align}\label{e.Rodrigues1}
& \exp \left( A\right)=\cos \rho I+\frac{\sin \rho}{\rho}A,
\notag
\\
& \rho^{2} = \operatorname{det} A=\frac{x^{2}+y^{2}+z^{2}}{4},
\\
& A=xu_{1}+yu_{2}+zu_{3} \in \mathfrak{su}\left( 2 \right).
\notag
\end{align}
Note that these identities use not only Lie bracket relations, but also the specific structure of the matrix Lie algebra we are considering.

\begin{defin}\label{l.2ndKindCoordinates}
We define the coordinates of the second kind  $\Psi_{\operatorname{SU}\left( 2 \right)}: \mathbb{R}^{3} \longrightarrow \operatorname{SU}\left( 2 \right)$ by
\[
\left( x_{1}, x_{2}, x_{3} \right) \longmapsto \exp\left(
x_{3}u_{3}\right) \exp\left( x_{2}u_{2}\right) \exp\left(
x_{1}u_{1}\right),
\]
where $( u_{1}, u_{2}, u_{3} )$ is a standard Milnor
basis of $\mathfrak{su}\left( 2 \right)$. 
\end{defin}

We skip the proof of the next elementary fact.
\begin{pro}[Injectivity in the coordinates of the second kind]\label{p.5.4} Suppose

\[
\Psi_{\operatorname{SU}\left( 2 \right)}\left( x_{1}, x_{2}, x_{3}\right)=\Psi_{\operatorname{SU}\left( 2 \right)}\left( y_{1}, y_{2}, y_{3}\right)
\]
for some $x_{1}, x_{2}, x_{3}, y_{1}, y_{2}, y_{3} \in \mathbb{R}$. Then either

\[
y_{1}=x_{1}+2\pi m, \quad y_{2}=x_{2}+2\pi k, \quad y_{3}=x_{3}+2\pi n
\]
for some $m, k, n \in \mathbb{Z}$, or

\[
y_{1}=x_{1}=\frac{\pi}{2}+2\pi m, \quad y_{2}=\frac{\pi}{2}+x_{2}+2\pi k, \quad y_{3}=\frac{\pi}{2}+x_{3}+2\pi n
\]
for some $m, k, n \in \mathbb{Z}$.
\end{pro}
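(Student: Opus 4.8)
\textbf{Proof proposal for Proposition~\ref{p.5.4}.}

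The plan is to use the Rodrigues' formula \eqref{e.Rodrigues1} to compute the matrix entries of $\Psi_{\operatorname{SU}(2)}(x_1,x_2,x_3)$ explicitly, and then read off the constraints imposed by equating two such matrices. Since each factor $\exp(x_i u_i) = \cos(x_i/2) I + 2\sin(x_i/2) u_i$ (taking $u_i = \widehat u_i$ to be the Pauli matrices of Example~\ref{pauli-ex}, which is harmless by Proposition~\ref{up-to-isometry} and Remark~\ref{no-basis-dependence}; here $\rho = |x_i|/2$ for the pure term $x_i u_i$ since $\det(x_i u_i) = x_i^2/4$), the product $\exp(x_3 u_3)\exp(x_2 u_2)\exp(x_1 u_1)$ is a product of three explicit $2\times 2$ complex matrices. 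First I would write $c_i = \cos(x_i/2)$, $s_i = \sin(x_i/2)$ and multiply out, obtaining the four entries of the resulting $\operatorname{SU}(2)$ matrix as trigonometric polynomials in the half-angles. Because $\operatorname{SU}(2)$ matrices have the form $\left(\begin{smallmatrix} \alpha & -\bar\beta \\ \beta & \bar\alpha \end{smallmatrix}\right)$ with $|\alpha|^2 + |\beta|^2 = 1$, the equality of two such products reduces to the two complex equations $\alpha(x) = \alpha(y)$ and $\beta(x) = \beta(y)$, i.e. four real equations.

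The key step is then to extract, from $\alpha(x) = \alpha(y)$ and $\beta(x) = \beta(y)$, the value of each half-angle modulo the appropriate period. With the explicit entries in hand, I would look for combinations that isolate a single variable: for instance, $|\alpha|^2 = c_1^2 c_3^2 + \ldots$ and similar quadratic expressions, or suitable products of real and imaginary parts, should yield equations like $\cos x_1 = \cos y_1$ or $\sin x_1 \cdot(\text{something}) = \sin y_1\cdot(\text{same something})$, from which $x_1 \equiv \pm y_1 \pmod{2\pi}$. The branching in the statement --- the ordinary case where all three differ by multiples of $2\pi$, versus the exceptional case where $x_1 = y_1 = \pi/2 + 2\pi m$ and the other two are shifted by an additional $\pi/2$ --- will emerge precisely from a degenerate sub-case: when $\cos x_1 = 0$, i.e. $s_1 = \pm 1$, $c_1 = 0$, the matrix $\exp(x_1 u_1) = \pm 2 u_1$ is (a scalar multiple of) $u_1$ itself, and then $u_1$ can be absorbed into a shift of the other exponentials via the relations $u_i u_j = \tfrac12 u_k$; concretely $\exp(x_2 u_2)\, u_1 = u_1 \exp(-x_2 u_2)$ fails but $u_1 \exp(x_2 u_2) u_1^{-1}$ rotates, so one gets an alternative representation of the same element, accounting for the second family of solutions. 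I would carefully track the signs ($u_1^2 = -\tfrac14 I$) through this absorption.

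The main obstacle I anticipate is the case analysis: ensuring the list of solution families is \emph{exhaustive}, not just that the listed tuples work. After deriving $x_1 \equiv y_1$ and handling the generic case by back-substitution (once $x_1, y_1$ agree mod $2\pi$, cancel $\exp(x_1 u_1)$ and repeat the argument on $\exp(x_3 u_3)\exp(x_2 u_2) = \exp(y_3 u_3)\exp(y_2 u_2)$, a two-factor version), the delicate part is showing the $\cos x_1 = 0$ branch produces \emph{exactly} the stated exceptional family and nothing more --- in particular that it forces $y_1 = x_1$ (not merely $\cos y_1 = 0$ with possibly $y_1 = -x_1 + 2\pi m$, which would be a genuinely different tuple unless $x_1 \equiv \pi/2$) and pins down the $\pi/2$ shifts in $x_2, x_3$ with the correct signs. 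Since the excerpt explicitly says "We skip the proof of the next elementary fact," I would keep this brief: set up the explicit matrix product, state the generic reduction, and dispatch the exceptional case with the $u_i u_j = \tfrac12 u_k$ absorption identity, leaving the routine trigonometry to the reader.
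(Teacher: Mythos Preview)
The paper does not give a proof: it explicitly says ``We skip the proof of the next elementary fact,'' so there is nothing to compare your proposal against. Your overall strategy---write each factor via Rodrigues' formula, multiply out the three explicit $2\times 2$ matrices, and compare entries---is exactly the kind of direct computation the authors have in mind when they call this ``elementary,'' and it will work.

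That said, there is a concrete error in your analysis of the exceptional branch. You write ``when $\cos x_1 = 0$, i.e.\ $s_1 = \pm 1$, $c_1 = 0$,'' but with your own notation $c_1 = \cos(x_1/2)$, $s_1 = \sin(x_1/2)$, the condition $\cos x_1 = 0$ is equivalent (via the double-angle formula $\cos x_1 = c_1^2 - s_1^2$) to $c_1 = \pm s_1 = \pm 1/\sqrt{2}$, \emph{not} to $c_1 = 0$. The case $c_1 = 0$ corresponds instead to $x_1 = \pi + 2\pi m$, and only there does $e^{x_1 u_1} = \pm 2u_1$ become a scalar multiple of $u_1$. So the absorption mechanism you describe (pushing a pure $u_1$ factor through the other exponentials using $u_i u_j = \tfrac12 u_k$) does not apply at the value $x_1 = \pi/2 + 2\pi m$ appearing in the stated exceptional family. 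When you actually carry out the matrix multiplication, the genuine source of nonuniqueness at $x_1 \equiv \pi/2 \pmod{2\pi}$ will emerge from the trigonometric identities among the entries rather than from the algebraic shortcut you sketched; you should expect to isolate $\cos x_2$ or a similar full-angle quantity and find a branching there. Fix this before writing up the case analysis.
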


\subsection{Maurer--Cartan forms of smooth curves}

\begin{defin}
Let $G$ be a Lie group, with its Lie algebra $\mathfrak{g}$ identified with the tangent space $T_e G$ at the identity.  For   $\gamma : [0,1] \to G$ a smooth curve in $G$, the \textbf{Maurer--Cartan form} of $\gamma$ is the smooth curve   $c_{\gamma} : [0,1]\to \mathfrak{g}$ defined by
\begin{equation}
 c_\gamma(t) = \left.\frac{d}{d\epsilon}\right|_{\epsilon = 0}
 L_{\gamma(t)^{-1}} \gamma(t+\epsilon) = dL_{\gamma(t)}^{-1} \gamma^{\prime}(t)
 \end{equation}
where $\mathfrak{g} = T_e G$ is identified with the tangent space to $G$ at the identity.
\end{defin}

Note that, for any left-invariant Riemannian metric $g$ on $G$, the length of $\gamma$ with respect to $g$ is given by $\ell_g[\gamma] = \int_0^1 \sqrt{\langle c_{\gamma}(t), c_\gamma(t)\rangle_{g}}\,dt$.

\begin{pro}\label{p.4.7} Suppose $\gamma: [0, 1] \longrightarrow \operatorname{SU}\left( 2 \right)$ is a smooth curve such that
\[
\gamma\left( t \right)=\sigma\left( t \right) e^{h\left( t \right)u_{i}},
\]
where $\sigma: [0, 1] \longrightarrow \operatorname{SU}\left( 2 \right)$ is a smooth curve. Then the Maurer-Cartan form for the path $\gamma$ is given by

\[
c_{\gamma}\left( t \right)=
\left(
  \begin{array}{ccc}
   1 & 0 & 0 \\
   0 &\cos h & \sin h  \\
 0& -\sin h & \cos h  \\
  \end{array}
\right)
\left(
  \begin{array}{c}
 c_{i}  \\
 c_{j}  \\
 c_{k} \\
  \end{array}
\right)
+\left(
  \begin{array}{c}
 h^{\prime}  \\
 0  \\
 0 \\
  \end{array}
\right),
\]
where
\[
c_{\sigma}\left( t \right)=c_{i}\left( t \right)u_{i}+c_{j}\left( t \right)u_{j}+c_{k}\left( t \right)u_{k}=\left( c_{i}\left( t \right), c_{j}\left( t \right), c_{k}\left( t \right)\right),
\]
$\left( i, j, k \right)$ is a cyclic permutation of $\left( 1, 2, 3 \right)$ and $( u_{1}, u_{2}, u_{3} )$ is a Milnor basis for $\mathfrak{su}\left( 2 \right)$.
\end{pro}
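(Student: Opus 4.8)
The plan is to compute the Maurer--Cartan form directly from the definition, exploiting that in a matrix group left translation is matrix multiplication, so that $c_\gamma(t) = dL_{\gamma(t)}^{-1}\gamma'(t) = \gamma(t)^{-1}\gamma'(t)$. Writing $\gamma(t) = \sigma(t)\,e^{h(t)u_i}$ and differentiating by the product rule, and using that $u_i$ commutes with $e^{h(t)u_i}$, gives
\[
\gamma'(t) = \sigma'(t)\,e^{h(t)u_i} + h'(t)\,\sigma(t)\,u_i\,e^{h(t)u_i}.
\]
Multiplying on the left by $\gamma(t)^{-1} = e^{-h(t)u_i}\sigma(t)^{-1}$, recognizing $\sigma(t)^{-1}\sigma'(t) = c_\sigma(t)$, and again using that $u_i$ commutes with $e^{h(t)u_i}$ in the second term, we obtain
\[
c_\gamma(t) = e^{-h(t)u_i}\,c_\sigma(t)\,e^{h(t)u_i} + h'(t)\,u_i.
\]

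Next I would expand the conjugation termwise using $c_\sigma(t) = c_i u_i + c_j u_j + c_k u_k$. The $u_i$-term is unchanged under conjugation by $e^{\pm h u_i}$. For the other two, I would apply Proposition~\ref{p.adjoint} (valid here because, by the remark following it, any distinct pair from a standard Milnor basis satisfies $[[X,Y],Y] = -X$) with $(X,Y) = (u_j,u_i)$ and $(X,Y) = (u_k,u_i)$. Using the Milnor bracket relations $[u_j,u_i] = -u_k$ and $[u_k,u_i] = u_j$ for a cyclic permutation $(i,j,k)$ of $(1,2,3)$, this yields
\[
e^{-h u_i}u_j\,e^{h u_i} = \cos h\,u_j - \sin h\,u_k, \qquad e^{-h u_i}u_k\,e^{h u_i} = \cos h\,u_k + \sin h\,u_j.
\]

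Substituting these back into the expression for $c_\gamma(t)$ and collecting coefficients gives the $u_i$-component $c_i + h'$, the $u_j$-component $c_j\cos h + c_k\sin h$, and the $u_k$-component $-c_j\sin h + c_k\cos h$. Written in the ordered basis $(u_i,u_j,u_k)$, this is precisely the asserted product of the displayed rotation matrix with $(c_i,c_j,c_k)^T$ plus the vector $(h',0,0)^T$. There is no substantial obstacle: the computation is a short application of Proposition~\ref{p.adjoint}, and the only points demanding care are tracking the correct bracket relation (and hence the sign of the $\sin h$ entries) for the reordered cyclic triple $(i,j,k)$, and verifying that the hypothesis of Proposition~\ref{p.adjoint} indeed holds for the pairs of Milnor basis vectors used.
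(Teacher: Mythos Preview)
Your proof is correct and follows essentially the same approach as the paper: both compute $c_\gamma$ by the product rule to obtain $c_\gamma(t) = e^{-h u_i}\,c_\sigma(t)\,e^{h u_i} + h'(t)\,u_i$, then apply Proposition~\ref{p.adjoint} termwise to the conjugation and collect coefficients in the basis $(u_i,u_j,u_k)$. Your version is slightly more careful in explicitly noting that the hypothesis of Proposition~\ref{p.adjoint} is satisfied for the relevant pairs of Milnor basis vectors, but otherwise the argument is the same.
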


\begin{proof} 
First observe
\begin{align*}
& c_{\gamma}\left( t \right)=e^{-h\left( t \right)u_{i}}\sigma^{-1}\left( \sigma^{\prime}e^{h\left( t \right)u_{i}}+\sigma h^{\prime}\left( t \right)u_{i}e^{h\left( t \right)u_{i}}\right)
\\
&=\operatorname{Ad}_{e^{h\left( t \right)u_{i}}}\left( c_{\sigma}\left( t \right) \right)+h^{\prime}\left( t \right)u_{i}.
\end{align*}
By Proposition~\ref{p.adjoint} applied to $c_{\sigma}\left( t \right)=c_{1}\left( t \right)u_{1}+c_{2}\left( t \right)u_{2}+c_{3}\left( t \right)u_{3}$

\begin{align*}
& \operatorname{Ad}_{e^{h\left( t \right)u_{i}}}\left( c_{\sigma}\left( t \right)\right)
\\
&=c_{1}\left( t \right)u_{i}+c_{2}\left( t \right)\left( \cos h\left( t \right) u_{j}+\sin h\left( t \right)[u_{j}, u_{i}]\right)+
c_{3}\left( t \right)\left( \cos h\left( t \right) u_{k}+\sin h\left( t \right)[u_{k}, u_{i}]\right)
\\
&=c_{i}\left( t \right)u_{i}+c_{j}\left( t \right)\left( \cos h\left( t \right) u_{j}-\sin h\left( t \right)u_{k}\right)+
c_{k}\left( t \right)\left( \cos h\left( t \right) u_{k}+\sin h\left( t \right)u_{j}\right),
\end{align*}
$\left( i, j, k \right)$ is a cyclic permutation of $\left( 1, 2, 3 \right)$. Therefore

\begin{align*}
& c_{\gamma}\left( t \right)=
\left(c_{i}\left( t \right)+h^{\prime}\left( t \right)\right)u_{i}
+ \left(c_{j}\left( t \right)\cos h\left( t \right) +c_{k}\left( t \right)\sin h\left( t \right)\right)u_{j}
\\
& +
 \left( -c_{j}\left( t \right)\sin h\left( t \right)+c_{k}\left( t \right)\cos h\left( t \right)\right)u_{k}.
\end{align*}
This can be written in a matrix form as
\[
c_{\gamma}\left( t \right)=
\left(
  \begin{array}{ccc}
   1 & 0 & 0 \\
   0 &\cos h & \sin h  \\
 0& -\sin h & \cos h  \\
  \end{array}
\right)
\left(
  \begin{array}{c}
 c_{i}  \\
 c_{j}  \\
 c_{k} \\
  \end{array}
\right)
+\left(
  \begin{array}{c}
 h^{\prime}  \\
 0  \\
 0 \\
  \end{array}
\right).
\]
\end{proof}

\begin{cor}\label{c.4.8} By iterating Proposition~\ref{p.4.7}, for $\gamma\left( t \right)=\Psi_{\operatorname{SU}\left( 2 \right)}\left( x_{1}\left( t \right), x_{2}\left( t \right), x_{3}\left( t \right)\right)$ defined by Definition~\ref{l.2ndKindCoordinates} we have

\begin{align*}
& c_{\gamma}\left( t \right)=
\left(
  \begin{array}{ccc}
 1 & 0 & 0 \\
 0 & \cos x_{1} & \sin x_{1} \\
 0 & -\sin x_{1} & \cos x_{1} \\
  \end{array}
\right)
\left(
  \begin{array}{ccc}
 \cos x_{2} & 0 &  -\sin x_{2} \\
 0 &  1 & 0 \\
  \sin x_{2} & 0 & \cos x_{2} \\
  \end{array}
\right)
\left(
  \begin{array}{c}
   0 \\
 0  \\
   x_{3}^{\prime}  \\
  \end{array}
\right)
\\
& +
\left(
  \begin{array}{ccc}
 1 & 0 & 0 \\
 0 & \cos x_{1} & \sin x_{1} \\
 0 & -\sin x_{1} & \cos x_{1} \\
  \end{array}
\right)
\left(
  \begin{array}{c}
 0 \\
 x_{2}^{\prime}  \\
   0\\
  \end{array}
\right)
+
\left(
  \begin{array}{c}
 x_{1}^{\prime} \\
  0 \\
   0\\
  \end{array}
\right)
\\
&
=
\left(
  \begin{array}{ccc}
 1 & 0 & 0 \\
 0 & \cos x_{1} & \sin x_{1} \\
 0 & -\sin x_{1} & \cos x_{1} \\
  \end{array}
\right)
\left(
  \begin{array}{ccc}
 1 & 0 & -\sin x_{2} \\
 0 & 1 & 0 \\
 0 & 0 & \cos x_{2} \\
  \end{array}
\right)
\left(
  \begin{array}{c}
 x_{1}^{\prime} \\
 x_{2}^{\prime}  \\
 x_{3}^{\prime}\\
  \end{array}
\right).
\end{align*}

In particular, since $u_1, u_2, u_3$ are orthonormal for the
bi-invariant Riemannian metric $g_0$, the Jacobian of $\Psi_{\operatorname{SU}\left( 2 \right)}$ with
respect to $g_0$ is given by $J(x_1, x_2, x_3) = |\cos x_2|$.

Moreover, by inverting the matrix to solve for $(x_1^{\prime}, x_2^{\prime}, x_3^{\prime})$,
we have that the coefficients of the Maurer-Cartan form $c_{\gamma}=\left( \alpha_{1}, \alpha_{2}, \alpha_{3} \right)$ in any standard Milnor basis satisfy

\begin{align}\label{e.MC-ODE}
\left(
  \begin{array}{c}
 x_{1}^{\prime}  \\
 x_{2}^{\prime}  \\
 x_{3}^{\prime} \\
  \end{array}
\right)&=
\left(
  \begin{array}{ccc}
  1 & 0 & \frac{\sin x_{2}}{\cos x_{2}} \\
 0 & 1 & 0 \\
 0 & 0 & \frac{1}{\cos x_{2}} \\
  \end{array}
\right)
\left(
  \begin{array}{ccc}
 1 & 0 & 0 \\
 0 & \cos x_{1} & -\sin x_{1} \\
 0 & \sin x_{1} & \cos x_{1} \\
  \end{array}
\right)
\left(
  \begin{array}{c}
 \alpha_{1}  \\
 \alpha_{2}  \\
 \alpha_{3} \\
  \end{array}
  \right) \\
&= \left(
  \begin{array}{ccc}
 1 & \sin x_1 \tan x_2 & \cos x_1 \tan x_2 \\
 0 & \cos x_{1} & -\sin x_{1} \\
 0 & \sin x_{1} \sec x_2 & \cos x_{1} \sec x_2 \\
  \end{array}
\right)
\left(
  \begin{array}{c}
 \alpha_{1}  \\
 \alpha_{2}  \\
 \alpha_{3} \\
  \end{array}
  \right).
\end{align}
\end{cor}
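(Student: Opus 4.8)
The plan is to establish the matrix formula for $c_\gamma$, the Jacobian identity, and the ODE~\eqref{e.MC-ODE} all at once, by peeling the three exponential factors of $\Psi_{\operatorname{SU}(2)}$ off one at a time, applying Proposition~\ref{p.4.7} at each step, and then performing elementary linear algebra on the resulting matrices.

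First I would write $\gamma(t) = \Psi_{\operatorname{SU}(2)}(x_1(t),x_2(t),x_3(t)) = \sigma_1(t)\,e^{x_1(t)u_1}$ with $\sigma_1(t) := e^{x_3(t)u_3}e^{x_2(t)u_2}$, and apply Proposition~\ref{p.4.7} with $i=1$ to express $c_\gamma$ in terms of $c_{\sigma_1}$, the rotation about the $u_1$-axis, and the inhomogeneous term $(x_1',0,0)$. Then I would repeat the peeling with $\sigma_1(t)=\sigma_2(t)\,e^{x_2(t)u_2}$, $\sigma_2(t):=e^{x_3(t)u_3}$, applying Proposition~\ref{p.4.7} with $i=2$; finally $\sigma_2$ is a one-parameter subgroup, so $c_{\sigma_2}(t)=x_3'(t)u_3=(0,0,x_3')$ directly. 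Substituting these identities in order produces the first displayed formula of the corollary. Rearranging --- collecting the vectors $(x_1',0,0)$, $(0,x_2',0)$, $(0,0,x_3')$ into a single column $(x_1',x_2',x_3')^{\top}$, and using that the $u_1$-rotation fixes the first coordinate --- then yields the compact form $c_\gamma = \left(\begin{smallmatrix}1&0&0\\0&\cos x_1&\sin x_1\\0&-\sin x_1&\cos x_1\end{smallmatrix}\right)\left(\begin{smallmatrix}1&0&-\sin x_2\\0&1&0\\0&0&\cos x_2\end{smallmatrix}\right)(x_1',x_2',x_3')^{\top}$, which is verified by a direct matrix multiplication.

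For the Jacobian, I would observe that the linear map $(x_1',x_2',x_3')\mapsto c_\gamma$ just computed is exactly $dL_{\gamma^{-1}}\circ d\Psi_{\operatorname{SU}(2)}$ at $(x_1,x_2,x_3)$, written in the basis $(u_1,u_2,u_3)$. Since $L_{\gamma^{-1}}$ is a $g_0$-isometry and $(u_1,u_2,u_3)$ is $g_0$-orthonormal by Proposition~\ref{milnor-orthonormal}, the Jacobian of $\Psi_{\operatorname{SU}(2)}$ with respect to $g_0$ is the absolute value of the determinant of the above matrix; the first factor is a rotation ($\det=1$) and the second is upper-triangular with diagonal $(1,1,\cos x_2)$, so $J=|\cos x_2|$. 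For~\eqref{e.MC-ODE}, I would simply invert the matrix: with $c_\gamma=(\alpha_1,\alpha_2,\alpha_3)$ and on the region where $\cos x_2\neq 0$, one has $(x_1',x_2',x_3')^{\top} = \left(\begin{smallmatrix}1&0&\tan x_2\\0&1&0\\0&0&\sec x_2\end{smallmatrix}\right)\left(\begin{smallmatrix}1&0&0\\0&\cos x_1&-\sin x_1\\0&\sin x_1&\cos x_1\end{smallmatrix}\right)(\alpha_1,\alpha_2,\alpha_3)^{\top}$, which is the first form in~\eqref{e.MC-ODE}; carrying out the product gives the second form.

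The argument is entirely computational, so there is no genuine conceptual obstacle. The one place that needs care is the bookkeeping in the repeated application of Proposition~\ref{p.4.7}: that proposition is stated relative to a cyclic permutation $(i,j,k)$ of $(1,2,3)$, so when it is invoked with $i=2$ one must correctly re-express the rotation about the $u_2$-axis and the inhomogeneous term $x_2'u_2$ back in the fixed ordered basis $(u_1,u_2,u_3)$ before combining with the $i=1$ step.
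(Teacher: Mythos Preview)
Your proposal is correct and is exactly the approach the paper intends: the corollary is stated as following ``by iterating Proposition~\ref{p.4.7}'', with no further proof given, and your three-step peeling $\gamma=\sigma_1 e^{x_1 u_1}$, $\sigma_1=\sigma_2 e^{x_2 u_2}$, $\sigma_2=e^{x_3 u_3}$ together with the determinant and inversion computations is precisely that iteration spelled out. The caution you flag about re-expressing the $i=2$ application of Proposition~\ref{p.4.7} back in the fixed $(u_1,u_2,u_3)$ ordering is the right point of care, and doing so indeed produces the $\left(\begin{smallmatrix}\cos x_2&0&-\sin x_2\\0&1&0\\\sin x_2&0&\cos x_2\end{smallmatrix}\right)$ factor appearing in the statement.
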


\subsection{Exponential identities}

A fundamental property of geometry on Lie groups is that for vectors
$v,w \in \mathfrak{g}$, one can move in the $[v,w]$ direction by
combining steps in the $v$ and $w$ directions.  Qualitatively, this is
evident from the Baker--Campbell---Dynkin--Hausdorff formula as follows. 
\begin{equation}
  \exp(s v) \exp(t w) \exp(-sv) \exp(-tw) = \exp\left(st [v,w] + o(s^2+t^2)\right).
\end{equation}
In the case of $\operatorname{SU}(2)$, given a left-invariant metric $g$ with
parameters $a_1, a_2, a_3$ and corresponding standard Milnor basis
$u_1, u_2, u_3$, the relation $[u_1, u_2]=u_3$ gives
\begin{equation}\label{milnor-exp-commute}
  \exp(s u_1) \exp(t u_2) \exp(-s u_1) \exp(-t u_2) \approx \exp(st u_3).
\end{equation}
It is easy to see in general that $d_g(e, \exp(t u_i)) \leqslant a_i t$, so
\eqref{milnor-exp-commute} together with
the left-invariance of $d_g$ then implies
\begin{equation*}
  d_g(e, \exp(st u_3)) \lesssim 2 (s a_1 + t a_2)
\end{equation*}
which, depending on the values of $s, t, a_i$ may be a much better
bound than the trivial $d_g(e, \exp(st u_3)) \leqslant st a_3$.  However,
this bound is useful only when $s,t$ are not too small, and in that
case one must pay careful attention to the remainder terms in the
expansion, and obtain uniform control over its $u_1, u_2, u_3$
components separately.  Moreover, for our current purposes in
$\operatorname{SU}(2) \times \mathbb{R}^3$, it is also necessary to iterate
\eqref{milnor-exp-commute} to take advantage of third-order brackets
such as $[[v_1, v_2], v_1] = u_2$.  This approach is possible but
tedious, and so we instead make use of a helpful closed-form identity
which eliminates most of the unwanted remainder terms.

\begin{theo}\label{long-identity}
  Let $(u_1, u_2, u_3)$ be a standard Milnor basis for
  $\mathfrak{su}(2)$.  For any $s,t \in \mathbb{R}$, let $\tau =
  \tau(s,t) = \arctan\left(\cos \frac{s}{2} \tan \frac{t}{2}\right)$.
  Then
  \begin{equation}\label{identity-f-def}
 e^{-\tau u_2} e^{s u_1/2} e^{t u_2} e^{-s u_1} e^{-t u_2} e^{s
   u_1/2} e^{\tau u_2} = e^{f(s,t) u_3}
  \end{equation}
  where
  \begin{equation}\label{f-formula}
 f(s,t) =2 \arccos\left(\cos^2 \frac{s}{2} + \sin^2 \frac{s}{2} \cos t\right).
  \end{equation}
  Moreover, we have $|\tau(s,t)| \leqslant \frac{|t|}{2}$, and
  for some small constants $c, \eta >0$, we have $|f(s,t)| \geqslant c |st|$ for all
  $|s|,|t| \leqslant \eta$.
\end{theo}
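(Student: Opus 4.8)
The plan is to verify the identity \eqref{identity-f-def} by brute-force matrix computation using the relations \eqref{XYZ-relations}, and then to analyze the elementary functions $\tau$ and $f$ directly. I would first recall Rodrigues' formula \eqref{e.Rodrigues1}, which in the present notation gives $e^{\theta u_i} = \cos\frac{\theta}{2} I + 2\sin\frac{\theta}{2}\, u_i$ (note the factor $1/2$ coming from $u_i^2 = -\frac14 I$, so that the "angle" $\rho$ in \eqref{e.Rodrigues1} is $\theta/2$ when $A = \theta u_i$). The key structural observation is that the product on the left of \eqref{identity-f-def} is a \emph{palindromic conjugate}: writing $P = e^{s u_1/2} e^{t u_2} e^{-s u_1} e^{-t u_2} e^{s u_1/2}$, the left side is $e^{-\tau u_2} P e^{\tau u_2}$. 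One computes $P$ first; since each factor is $\pm$ a linear combination of $I$ and a single $u_i$, $P$ expands via \eqref{XYZ-relations} into $\alpha I + \beta u_1 + \gamma u_2 + \delta u_3$ for explicit trigonometric $\alpha,\beta,\gamma,\delta$ depending on $s,t$. I expect the palindromic structure to force $\beta = 0$ (the $u_1$-component cancels), so $P$ lies in the subgroup $\exp(\operatorname{span}\{u_2,u_3\})$ up to the scalar part; then conjugating by $e^{\tau u_2}$ — which acts as a rotation in the $(u_2,u_3)$-plane by Proposition~\ref{p.adjoint} — can be used to kill the $u_2$-component, and the precise choice $\tau = \arctan(\cos\frac{s}{2}\tan\frac{t}{2})$ is exactly what achieves this. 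What remains is $\alpha I + (\text{something})\, u_3 = e^{f u_3}$, and matching scalar parts against Rodrigues gives $\cos\frac{f}{2} = \alpha = \cos^2\frac{s}{2} + \sin^2\frac{s}{2}\cos t$, i.e.\ \eqref{f-formula}.

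Concretely, I would carry out the steps in this order. (1) Write each of the seven exponential factors in the form $c_\ell I + 2 s_\ell u_{i(\ell)}$ using Rodrigues. (2) Multiply out $P = e^{s u_1/2} e^{t u_2} e^{-s u_1} e^{-t u_2} e^{s u_1/2}$ using $u_1 u_2 = \tfrac12 u_3$, $u_1^2 = -\tfrac14 I$, etc.; collect into $P = \alpha I + \gamma u_2 + \delta u_3$ (checking $\beta=0$), with $\alpha = \cos^2\frac{s}{2} + \sin^2\frac{s}{2}\cos t$ and $\gamma,\delta$ explicit. Since $P$ is a product of unit-determinant matrices, $\alpha^2 + \gamma^2 + \delta^2$ (after accounting for the $-\tfrac14$ normalization) must equal $1$, which gives a useful consistency check and lets me solve for $\gamma^2 + \delta^2$. (3) Apply Corollary~\ref{cor.5.4}/Proposition~\ref{p.adjoint}: conjugation by $e^{\tau u_2}$ fixes $u_2$ and rotates the $(u_1,u_3)$-plane; since $P$ has no $u_1$-component, I get $e^{-\tau u_2}Pe^{\tau u_2} = \alpha I + \gamma u_2 + \delta(\cos\tau\, u_3 - \sin\tau\, u_1)$ — so I actually need conjugation to remove the $u_2$ \emph{and} not create a $u_1$ term; this forces revisiting which plane is rotated. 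The cleaner route: $e^{\tau u_2}$ rotates $(u_1, u_3)$, so to eliminate a $u_2$ term I should instead note $P$ must already have its non-scalar part proportional to $\cos\tau\, u_3 + \sin\tau\, u_1$ for the palindrome to collapse — equivalently, verify directly that $\gamma/\delta = \tan\tau$ with $\tau$ as defined, by computing $\gamma = \sin\frac{s}{2}\cos\frac{s}{2}\sin t \cdot(\text{factor})$ and $\delta = \cos\frac{s}{2}\sin t\cdot(\text{factor})$, so that their ratio is $\tan\frac{s}{2}\,/\cos\frac{s}{2}$-type and matches $\tan\tau = \cos\frac{s}{2}\tan\frac{t}{2}$ after a half-angle manipulation. (4) Once the non-scalar part of $e^{-\tau u_2}Pe^{\tau u_2}$ is purely along $u_3$, read off $f$ from Rodrigues: $\cos\frac{f}{2} = \alpha$.

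For the final estimates: the bound $|\tau(s,t)| \leqslant \tfrac{|t|}{2}$ is immediate since $|\cos\frac{s}{2}| \leqslant 1$ implies $|\cos\frac{s}{2}\tan\frac{t}{2}| \leqslant |\tan\frac{t}{2}|$, and $|\arctan x| \leqslant |x|$ combined with... actually one wants $|\arctan(\cos\frac s2 \tan\frac t2)| \leqslant |\arctan(\tan\frac t2)| = \frac{|t|}{2}$ for $|t| < \pi$, using monotonicity of $\arctan$ and that $x \mapsto |\tan x|$ is increasing on $(-\pi/2,\pi/2)$ — so this holds for $|t| < \pi$, which is the relevant range. For the lower bound $|f(s,t)| \geqslant c|st|$ when $|s|,|t| \leqslant \eta$: from \eqref{f-formula}, $1 - \cos\frac{f}{2} = \sin^2\frac{s}{2}(1 - \cos t)$, so $2\sin^2\frac{f}{4} = \sin^2\frac{s}{2}\cdot 2\sin^2\frac{t}{2}$, giving $\sin\frac{f}{4} = |\sin\frac s2 \sin\frac t2|$ (taking $f \geqslant 0$). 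For small $s,t$ we have $|\sin\frac{s}{2}| \geqslant \frac{|s|}{\pi}$ and similarly for $t$, while $\sin\frac{f}{4} \leqslant \frac{f}{4}$, so $\frac{f}{4} \geqslant \frac{|st|}{\pi^2}$, i.e.\ $f \geqslant \frac{4}{\pi^2}|st|$; this works for all $|s|,|t| \leqslant \pi$, so any $\eta \leqslant \pi$ suffices with $c = 4/\pi^2$. The main obstacle is Step (2)–(3): carefully multiplying out the five-fold product $P$ without sign errors in the $u_i u_j = \tfrac12 u_k$ relations, and correctly identifying that the palindromic structure plus the specific $\tau$ collapses everything to a single $u_3$ exponential — the bookkeeping is where all the work lies, whereas the closed-form answer \eqref{f-formula} and the estimates are then quick.
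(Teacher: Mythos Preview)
Your overall strategy matches the paper's: compute the inner five-fold product $P = e^{(s/2)u_1}e^{tu_2}e^{-su_1}e^{-tu_2}e^{(s/2)u_1}$ as $\alpha I + \beta u_1 + \gamma u_2 + \delta u_3$ via Rodrigues and the relations \eqref{XYZ-relations}, observe a cancellation, then conjugate by $e^{\tau u_2}$ to reduce to a pure $u_3$-exponential. Your final estimates on $\tau$ and $f$ are correct, and your identity $\sin\frac{f}{4} = \bigl|\sin\frac{s}{2}\sin\frac{t}{2}\bigr|$ is in fact a cleaner route to the lower bound than the paper's.

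However, you have the cancellation backwards. The palindromic structure (outer factors along $u_1$) kills the $u_2$-component, not the $u_1$-component: one finds $\gamma = 0$ while $\beta = \sin s\,(1-\cos t)$ and $\delta = 2\sin\frac{s}{2}\sin t$. The paper sees this by first computing the inner conjugate $e^{tu_2}e^{-su_1}e^{-tu_2} = \exp\bigl(-s(\cos t\, u_1 - \sin t\, u_3)\bigr)$ via Corollary~\ref{cor.5.4}, which already has no $u_2$-part, and then noting (equation \eqref{e.6.4}, from anticommutation) that sandwiching by $e^{(s/2)u_1}$ on both sides leaves the $u_2$- and $u_3$-components unchanged. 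Conjugation by $e^{\tau u_2}$ rotates the $(u_1,u_3)$-plane while fixing $u_2$, so it is exactly the right tool to eliminate $\beta$, not $\gamma$; the choice $\tan\tau = \beta/\delta = \cos\frac{s}{2}\tan\frac{t}{2}$ does this. Your ``cleaner route'' paragraph almost arrives here, but the phrase ``to eliminate a $u_2$ term'' and the ratio $\gamma/\delta$ are still the wrong way round, and your guessed forms for the two nonzero coefficients do not match the actual ones. Once you swap the roles of $\beta$ and $\gamma$ throughout, your plan goes through without further change.
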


\begin{proof}
More generally, suppose $X, Y, Z$ are matrix Lie algebra elements satisfying \eqref{XYZ-relations}. 
Then we claim that
\begin{align*}
& e^{-\tau Y} e^{uX} e^{tY} e^{-sX} e^{-tY}e^{uX}e^{\tau Y}
\\
& =
\sqrt{a^{2}+c^{2}} Z
+\left( \cos\frac{s}{2} \cos u + \sin\frac{s}{2} \cos t \sin u \right) I,
\end{align*}
where
\begin{align*}
& a=2\left( - \sin\frac{s}{2} \cos t \cos u + \cos\frac{s}{2}\sin u\right),
\\
& c= 2\sin\frac{s}{2}\sin t
\\
& \tau=\arctan \frac{a}{c}.
\end{align*}

First, note that the assumed identities \eqref{XYZ-relations} imply that

\begin{equation*}
  XY+YX=0, \quad  ZX+XZ=0, \quad  YZ+ZY=0.
\end{equation*}
This implies that $e^{uX}Y e^{uX}=Y$ and $e^{uX}Z e^{uX}=Z$ since the derivatives in $u$ of these functions are $0$. Then by Rodrigues' formula \eqref{e.Rodrigues1} we have that for any $a, b, c, d \in \mathbb{R}$
\begin{multline} \label{e.6.4}
 e^{uX}\left( aX+bY+cZ+dI \right) e^{uX}= aXe^{2uX} +bY+cZ+de^{2uX}
\\
= \left( a \cos u  +2d\sin u\right) X +bY+cZ+\left( d\cos u -\frac{a}{2} \sin u\right) I.
\end{multline}
Note that this procedure does not change $Y$ and $Z$ components. Now we can use Rodrigues' formula \eqref{e.Rodrigues1} and Corollary~\ref{cor.5.4}  to see that
\begin{align*}
& e^{tY} e^{-sX} e^{-tY}
\\
& =\exp\left(  -s \cdot \cos t X +s \cdot \sin t Z  \right)=
\cos\frac{s}{2}I +\frac{\sin\frac{s}{2}}{\frac{s}{2}}\left(  -s \cdot \cos t X +s \cdot \sin t Z  \right)
\\
& =\cos\frac{s}{2}I   - 2\sin\frac{s}{2} \cos t \cdot X +  2\sin\frac{s}{2}\sin t \cdot Z.
\end{align*}
Apply Equation \eqref{e.6.4} with $a=- 2\sin\frac{s}{2} \cos t, b=0, c=2\sin\frac{s}{2}\sin t, d=\cos\frac{s}{2}$ to see that
\begin{align*}
e^{uX} e^{tY} e^{-sX} e^{-tY}e^{uX} &=
2\left( - \sin\frac{s}{2} \cos t \cos u + \cos\frac{s}{2}\sin u\right)
X +2\sin\frac{s}{2}\sin t \cdot Z \\
&\quad +\left( \cos\frac{s}{2} \cos u + \sin\frac{s}{2} \cos t \sin u\right) I.
\end{align*}
By Proposition~\ref{p.adjoint} and Rodrigues' formula \eqref{e.Rodrigues1} we see that for any $a, b, c, d \in \mathbb{R}$ we have
\begin{align*}
& e^{-\tau Y}\left( aX+bY+cZ+dI \right) e^{\tau Y}
\\
& =\left( \cos \frac{\tau}{2} I-2\sin \frac{\tau}{2} Y \right)\left( aX+bY+cZ+dI \right) \left( \cos \frac{\tau}{2} I+\sin \frac{\tau}{2} Y \right)
\\
& =
\left( a\cos \tau -c\sin \tau\right) X
+b Y
+\left( a\sin \tau+ c\cos \tau \right) Z
+d I,
\end{align*}
This procedure does not change the $Y$ component. Now we use the formula with
\begin{align*}
& a=2\left( - \sin\frac{s}{2} \cos t \cos u + \cos\frac{s}{2}\sin u\right),
\\
& b=0,
\\
& c= 2\sin\frac{s}{2}\sin t,
\\
& d=\cos\frac{s}{2} \cos u + \sin\frac{s}{2} \cos t \sin u,
\end{align*}
therefore
\begin{align*}
& e^{-\tau Y} e^{uX} e^{tY} e^{-sX} e^{-tY}e^{uX}e^{\tau Y}
\\
& =\left( \cos \frac{\tau}{2} I-2\sin \frac{\tau}{2} Y \right)\left( aX+bY+cZ+dI \right) \left( \cos \frac{\tau}{2} I+2\sin \frac{\tau}{2} Y \right)
\\
& =
\left( a\cos \tau -c\sin \tau\right) X
+\left( a\sin \tau+ c\cos \tau \right) Z
+\left( \cos\frac{s}{2} \cos u + \sin\frac{s}{2} \cos t \sin u \right) I.
\end{align*}
If we choose $\tau$ such that $\tan \tau = \frac{a}{c}$, then the
coefficient of $X$ vanishes.  For such $\tau$ we have $a \sin \tau + c
\cos \tau = \sqrt{a^2 + c^2}$, and so
\begin{align*}
& e^{-\tau Y} e^{uX} e^{tY} e^{-sX} e^{-tY}e^{uX}e^{\tau Y}
\\
& =
\sqrt{a^{2}+c^{2}} Z
+\left( \cos\frac{s}{2} \cos u + \sin\frac{s}{2} \cos t \sin u \right) I.
\end{align*}
In particular, if $u=s/2$, then
\begin{equation}\label{XYZ-final}
 e^{-\tau Y} e^{uX} e^{tY} e^{-sX} e^{-tY}e^{uX}e^{\tau Y}
 =
\sqrt{a^{2}+c^{2}} Z
+\left( \cos^{2}\frac{s}{2}  + \sin^{2}\frac{s}{2} \cos t  \right) I,
\end{equation}
where
\begin{align*}
& a=2\left( - \sin\frac{s}{2} \cos \frac{s}{2} \cos t + \sin \frac{s}{2}\cos\frac{s}{2}\right)=\sin s\left( 1-\cos t\right),
\\
& c= 2\sin\frac{s}{2}\sin t
\\
& \tau=\arctan \left( \cos\frac{s}{2} \tan \frac{t}{2} \right).
\end{align*}
Comparing coefficients of $I$ between \eqref{XYZ-final} and \eqref{e.Rodrigues1} recovers the formula \eqref{f-formula} for $f(s,t)$.

The claimed inequality $|\tau(s,t)| \leqslant \frac{|t|}{2}$ is immediate
from the monotonicity of $\arctan$ and the fact that $\left|\cos
\frac{s}{2}\right| \leqslant 1$.  As for the lower bound on $f$, we may use
the Pythagorean identity to rewrite
\eqref{f-formula} as
\begin{equation}
  \cos \frac{f(s,t)}{2} = 1 - \left(\sin^2\frac{s}{2}\right) \left(1 - \cos t\right).
\end{equation}
Using the elementary inequality $\cos \theta \geqslant 1 -
\frac{\theta^2}{2}$, and rearranging, we have
\begin{equation*}
  |f(s,t)| \geqslant \sqrt{8 \left(\sin^2 \frac{s}{2}\right) (1 - \cos t)}.
\end{equation*}
Now using the crude bounds $|\sin \frac{s}{2}| \geqslant \frac{|s|}{4}$ and $1 - \cos t \geqslant \frac{t^2}{4}$, both valid for sufficiently small $|s|, |t|$ (for instance, when $|s| \leqslant \pi$ and $|t| \leqslant \frac{\pi}{2}$), we can conclude that $|f(s,t)| \geqslant |st|/\sqrt{8}$ for
all such $s,t$.
\end{proof}

An important feature of \eqref{identity-f-def} is that on the left-hand side, the coefficients of $u_1$ and of $u_2$ each sum to zero.  Hence we immediately have the following extension.

\begin{cor}\label{identity-with-v}
  Let $G$ be any Lie group containing $\mathrm{SU}(2)$ as a subgroup, and let $u_1, u_2, u_3$ be a standard Milnor basis for $\mathfrak{su}(2) \subset \mathfrak{g} = \operatorname{Lie}(G)$.  Let $f_1, f_2 \in \mathfrak{g}$ be central elements, and let $v_i = u_i + f_i$ for $i=1,2$.  Then \eqref{identity-f-def} holds with $v_1, v_2$ in place of $u_1, u_2$.
\end{cor}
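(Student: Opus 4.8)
The plan is to exploit the observation noted just before the corollary: on the left-hand side of \eqref{identity-f-def}, every factor is an exponential of a scalar multiple of $u_1$ or $u_2$, and the sum of all the coefficients appearing on $u_1$ is $\frac{s}{2} - s + \frac{s}{2} = 0$, while the sum of the coefficients on $u_2$ is $-\tau + t - t + \tau = 0$. Since $f_1, f_2$ are central in $\mathfrak{g}$, the one-parameter subgroups $e^{\alpha f_i}$ commute with everything in $G$, and in particular with each other and with every $e^{\beta u_j}$. The strategy is therefore to insert the abelian factors into the product, slide them past all the $\mathrm{SU}(2)$ factors using centrality, and collect them; because the total $u_1$-coefficient and the total $u_2$-coefficient each vanish, the collected abelian contribution is $e^{0\cdot f_1} e^{0 \cdot f_2} = e$, so nothing survives on the right.

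Concretely, first I would note that for $v_i = u_i + f_i$ with $f_i$ central, we have $e^{\beta v_i} = e^{\beta u_i} e^{\beta f_i} = e^{\beta f_i} e^{\beta u_i}$, since $u_i$ and $f_i$ commute (centrality of $f_i$). Applying this to each of the seven factors on the left-hand side of \eqref{identity-f-def} with $v_1, v_2$ substituted for $u_1, u_2$, I would write
\begin{equation*}
e^{-\tau v_2} e^{s v_1/2} e^{t v_2} e^{-s v_1} e^{-t v_2} e^{s v_1/2} e^{\tau v_2}
= \left(\prod e^{c_\ell f_{j(\ell)}}\right) \cdot e^{-\tau u_2} e^{s u_1/2} e^{t u_2} e^{-s u_1} e^{-t u_2} e^{s u_1/2} e^{\tau u_2},
\end{equation*}
where the product over $\ell$ ranges over the seven factors and $c_\ell, j(\ell)$ record the coefficient and index; here I have used centrality of $f_1, f_2$ to move all abelian factors to the front (they commute past the $u_j$-exponentials and past each other). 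Grouping the abelian factors by index, the front product equals $e^{(\sum_{j(\ell)=1} c_\ell) f_1} e^{(\sum_{j(\ell)=2} c_\ell) f_2}$. The $u_1$-indexed coefficients are $\tfrac{s}{2}, -s, \tfrac{s}{2}$, summing to $0$; the $u_2$-indexed coefficients are $-\tau, t, -t, \tau$, summing to $0$. Hence the front product is the identity, and the right-hand side reduces to $e^{-\tau u_2}\cdots e^{\tau u_2} = e^{f(s,t) u_3}$ by Theorem \ref{long-identity}. Since $u_3 = v_3 - f_3$ is not what appears, but rather $u_3$ itself lies in $\mathfrak{su}(2) \subset \mathfrak{g}$, the identity \eqref{identity-f-def} holds verbatim with $v_1, v_2$ replacing $u_1, u_2$ (and $u_3$ unchanged on the right).

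The main (and essentially only) obstacle is bookkeeping: one must be careful that moving $e^{\alpha f_i}$ past a factor $e^{\beta u_j}$ or $e^{\gamma f_k}$ is legitimate, which is exactly the content of $f_i$ being central in $\mathfrak{g}$ — so $e^{\alpha f_i}$ is central in the connected subgroup generated by $\mathrm{SU}(2)$ and the $e^{tf_k}$, hence commutes with all factors in sight. There is no analytic subtlety and no need to track remainder terms, since Theorem \ref{long-identity} already supplies the closed form for the $\mathrm{SU}(2)$ part; the corollary is purely a consequence of the vanishing of the two coefficient sums together with centrality.
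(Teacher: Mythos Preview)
Your proposal is correct and is precisely the argument the paper has in mind: the paper states the corollary as an immediate consequence of the observation that the coefficients of $u_1$ and of $u_2$ on the left side of \eqref{identity-f-def} each sum to zero, and your write-up simply makes explicit the (straightforward) mechanism by which centrality of $f_1, f_2$ turns that observation into a proof.
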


In particular, this will apply when $G = \mathrm{SU}(2) \times \mathbb{R}^3$ and the $v_i$ are a decoupled Milnor basis.

\section{Coordinates}\label{coordinates-sec}

For the remainder of the paper, $g$ denotes a decoupled left-invariant Riemannian metric on $G = \operatorname{SU}(2) \times \mathbb{R}^3$ with decoupled Milnor basis $(v_1, v_2, v_3, f_1, f_2, f_3)$ where $v_i = u_i + d f_i$ as defined in  Definition~\ref{decoupled-def}, and parameters $(a_1, a_2, a_3, d)$ as in Notation~\ref{parameters}.

Recall from Proposition~\ref{up-to-isometry} that a decoupled metric is uniquely defined up to isometric isomorphism by its parameters.  As such, all the computations below depend on the parameters of the metric $g$, and not on its decoupled Milnor basis $(v_i, f_i)$.  For instance, as noted in Remark~\ref{no-basis-dependence}, we could suppose without loss of generality that $(u_1, u_2, u_3)$ are always the standard Pauli matrices in $\mathfrak{su}(2)$ and $(f_1, f_2, f_3)$ is the standard Euclidean basis for $\mathbb{R}^3$.  Thus in the following computations, whenever a constant is defined in terms of the vectors $u_i, f_i$ of a  decoupled Milnor basis, without reference to the parameters of the metric, then the constant is in fact universal and does not depend on the metric $g$.

In what follows, we use $c, C$ to denote universal constants, where $c > 0$ is small and $C < \infty$ is large.  To simplify notation, their values may change from line to line.  

\begin{notation}
In our chosen normalization, for each vector $u_i$ of a standard Milnor basis for $\mathfrak{su}(2)$, the map $x
\mapsto e^{x u_i}$ is periodic with period $4\pi$. This follows for instance from Rodrigues' formula \eqref{e.Rodrigues1}.  As such, we let $\mathbb{S}$ denote a circle of circumference $4\pi$,
viewed as the quotient of $\mathbb{R}$ by the lattice $\{\pm 2 \pi, \pm 6 \pi, \pm 10\pi, \dots\}$, and then the map $\mathbb{S} \ni x \mapsto e^{x u_i}$ is well-defined.
\end{notation}

\begin{notation}\label{Psi-def}
Extending the coordinates of the second kind $\Psi_{\operatorname{SU}(2)}$ defined in Definition~\ref{l.2ndKindCoordinates} above, define $\Psi:  (\mathbb{S} \times \mathbb{R})^3 \to \operatorname{SU}(2) \times \mathbb{R}^3$ by
\begin{align*}\label{psi-def-eq}
\Psi(x_1, y_1, x_2, y_2, x_3, y_3) &= e^{x_3 u_3} e^{y_3 f_3}
 e^{x_2 u_2} e^{y_2 f_2} e^{x_1 u_1} e^{y_1 f_1} \\
 &= \Psi_{\operatorname{SU}(2)}(x_1, x_2, x_3) e^{y_1 f_1} e^{y_2 f_2} e^{y_3 f_3} \\
 &= \Psi_{\operatorname{SU}(2)}(x_1, x_2, x_3) e^{y_1 f_1 + y_2 f_2 + y_3 f_3}
\end{align*}
recalling that all the $f_i$ are in the center of $\mathfrak{g}$.
\end{notation}

\begin{notation}
We denote by $J : (\mathbb{S} \times \mathbb{R})^3 \to [0, \infty)$ the absolute value of the Jacobian determinant of $\Psi$ with respect to the Euclidean metric on $(\mathbb{S} \times  \mathbb{R})^3$ and the bi-invariant reference metric $g_0$ on $\operatorname{SU}(2) \times \mathbb{R}^3$ defined in Notation~\ref{g0-def}).  As the volume measures of these metrics are respectively Lebesgue measure and the normalized Haar measure $\mu_0$, then for any Borel set $K \subset (\mathbb{S} \times  \mathbb{R})^3$, we have
\begin{equation*}
\mu_0(\Psi(K)) \leqslant \int_K J\,dm
\end{equation*}
with equality if $\Psi$ is injective on $K$.
\end{notation}

By the translation invariance of $\mu_0$, we see that $J$ does not depend on the $y_i$ variables, so as noted in Corollary \ref{c.4.8}, we have $J = J(x_2) = |\cos x_2|$.  In particular, $|J| \leqslant 1$, so for any $K \subset (\mathbb{S} \times \mathbb{R})^3$ we have $\mu_0(\Psi(K)) \leqslant |K|$.     We also note that $J(0) = 1$.

\begin{notation}\label{iota}
  Let $\iota > 0$ be a small enough number so that on the set
  \begin{equation*}
    A_{\iota} := \{ |x_1|, |x_2|, |x_3| < \iota \} \subset (\mathbb{S} \times \mathbb{R})^3,
  \end{equation*}
  the map $\Psi$ is injective and has $J \geqslant \frac{1}{2}$.    The existence of such $\iota$ is clear from the inverse function theorem, or from Proposition \ref{p.5.4}.  Then for $K \subset A_\iota$ we have $\mu_0(\Psi(K)) \geqslant \frac{1}{2} |K|$.
  \end{notation}

The key property of the geometry of $\operatorname{SU}(2) \times \mathbb{R}^3$, in terms of the decoupled Milnor basis $(v_i, f_i)$, is that the metric $g$ tells us the \emph{cost} of moving in the $v_i$ and
$f_i$ directions respectively.  But we can also move in the $u_i$ direction by exploiting the bracket relations $u_i = [u_j, u_k] = -[[u_i, u_j], u_j] = [[u_i, u_k], u_k]$; for instance, $e^{su_j}
e^{tu_k} e^{-su_j} e^{-t u_k} \approx e^{st u_i}$ plus lower order terms.  The directions $u_i, v_i, f_i$ are linearly dependent, so we need to keep track of all the points
that can be reached by any combination of those motions.

Any linear combination of $u_i, v_i, f_i$ may be written in terms of $u_i, f_i$ alone via
\begin{equation*}
\mu u_i + \nu v_i + \xi f_i = (\mu + \nu)u_i + (d \nu + \xi) f_i,
  \qquad \mu, \nu, \xi \in \mathbb{R}
\end{equation*}
and so if we constrain the sizes of the coefficients $\mu, \nu, \xi$ we obtain the following set.

\begin{notation}\label{H-notation}
Given parameters $\mu^{\ast}, \nu^{\ast}, \xi^{\ast}, d \geqslant 0$, let
  \begin{equation*}
 H = H(\mu^{\ast}, \nu^{\ast}, \xi^{\ast}, d) = \{ (\mu + \nu, d \nu + \xi) \in
 \mathbb{S} \times \mathbb{R} : |\mu| \leqslant \mu^{\ast}, |\nu| \leqslant \nu^{\ast},
 |\xi| \leqslant \xi^{\ast}\}.
  \end{equation*}

We also define the corresponding \emph{lifted} version in the plane.
  \begin{equation*}
 \widetilde{H} = \{ (\mu + \nu, d \nu + \xi) \in \mathbb{R}^2 : |\mu| \leqslant \mu^{\ast}, |\nu| \leqslant \nu^{\ast},
 |\xi| \leqslant \xi^{\ast}\}
  \end{equation*}
so that $H = \pi(\widetilde{H})$, where $\pi : \mathbb{R}^2 \to \mathbb{S}\times \mathbb{R}$ is the quotient map.

Finally, we let $H_\iota = H \cap ([-\iota, \iota] \times \mathbb{R})$, with $\iota$ as in Notation \ref{iota}.
\end{notation}

The set $\widetilde{H} \subset \mathbb{R}^2$ is an irregular hexagon formed by sweeping the rectangle $[-\mu^{\ast}, \mu^{\ast}] \times [-\xi^{\ast}, \xi^{\ast}]$ along the diagonal line of slope $d$ between the points $\pm(\nu^{\ast}, d\nu^{\ast})$; it can also be seen as the projection of a three-dimensional cuboid into the plane.  See Figure~\ref{hexagon-fig}.  The set $H \subset \mathbb{S}
\times \mathbb{R}$ is then  the hexagon $\widetilde{H}$ wrapped horizontally around a cylinder, where it may or may not overlap itself.  See Figure~\ref{hex-cyl-fig} for examples.  Finally, $H_\iota$ is the portion of $H$ lying in a narrow vertical strip on the cylinder, represented in Figure~\ref{hex-cyl-fig} by dashed lines.

\begin{figure}
  \includegraphics{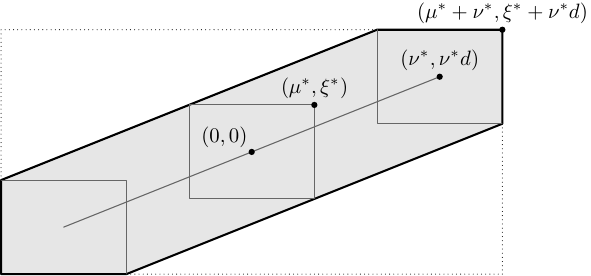}
  \caption{The hexagon $\widetilde{H}(\mu^{\ast}, \nu^{\ast}, \xi^{\ast})$}\label{hexagon-fig}
\end{figure}

\begin{figure}
  \begin{center}
  \begin{tabular}{ccc}
    \includegraphics{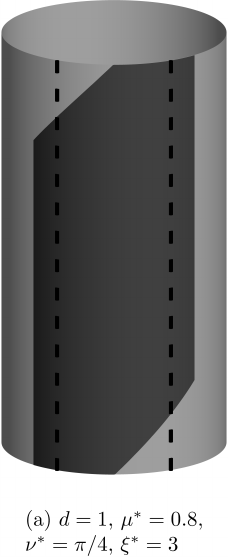} &
    \includegraphics{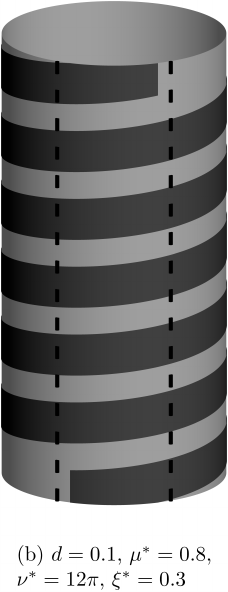} &
    \includegraphics{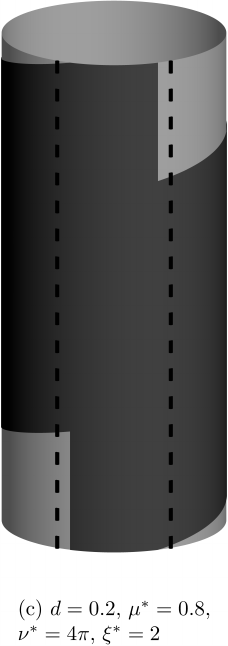}
  \end{tabular}
  \end{center}
  \caption{The set $H$ for various values of the parameters $d, \mu^\ast, \nu^\ast, \xi^\ast$, in dark gray. The cylinder $\mathbb{S} \times \mathbb{R}$ is shown in light gray for reference.  The portion of $H$ between the vertical dashed lines represents $H_\iota$.  Note that in (a) and (b), the projection $\pi : \widetilde{H} \to H$ is injective, but in (c) it is not.} \label{hex-cyl-fig}
  \end{figure}

\begin{lem}\label{H-volume-estimate}
Let
  \begin{equation}\label{VH-def}
 \overline{V}_H(\mu^{\ast}, \nu^{\ast}, \xi^{\ast}, d) = \min( d \mu^{\ast} \nu^{\ast} +
 \mu^{\ast} \xi^{\ast} + \nu^{\ast} \xi^{\ast}, d \nu^{\ast} + \xi^{\ast}).
  \end{equation}
Then for some universal constants $0 < c < C < \infty$, we have
  \begin{equation}\label{H-volume-eqn}
 c \overline{V}_H \leqslant |H| \leqslant C \overline{V}_H.
  \end{equation}
\end{lem}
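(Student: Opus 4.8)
The plan is to compute the area of the planar hexagon $\widetilde{H}$ exactly (up to universal constants) and then account for the wrapping onto the cylinder $\mathbb{S}\times\mathbb{R}$, which can only \emph{decrease} area. First I would observe that $|H|\le|\widetilde H|$ always, since $H=\pi(\widetilde H)$ and the quotient map $\pi$ is measure-nonincreasing; this immediately gives the upper bound once we bound $|\widetilde H|$. For the lower bound, the hexagon $\widetilde H$ has horizontal extent (in the $\mathbb S$-direction) equal to $2(\mu^\ast+\nu^\ast)$ and vertical extent $2(d\nu^\ast+\xi^\ast)$. If $\mu^\ast+\nu^\ast \le 2\pi$, then $\pi$ is injective on $\widetilde H$ (it fits inside one period of the cylinder) and $|H|=|\widetilde H|$. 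If $\mu^\ast+\nu^\ast > 2\pi$, then the hexagon wraps around, but in that regime one can check that $H$ still covers a full vertical strip: more precisely, for each value of the $\mathbb S$-coordinate, the fiber of $\widetilde H$ over it has measure comparable to $d\nu^\ast+\xi^\ast$ (by the ``sweeping a rectangle along a slope-$d$ segment'' description), so $|H|\gtrsim d\nu^\ast+\xi^\ast$, which is one of the two terms in $\overline V_H$. So the whole argument reduces to computing $|\widetilde H|$ and showing $|\widetilde H|\asymp \overline V_H$.

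For the area of $\widetilde H$: it is the Minkowski sum of the rectangle $R=[-\mu^\ast,\mu^\ast]\times[-\xi^\ast,\xi^\ast]$ with the segment $S$ from $-(\nu^\ast,d\nu^\ast)$ to $(\nu^\ast,d\nu^\ast)$. By the standard formula for the area of a Minkowski sum of a convex body with a segment, $|\widetilde H| = |R| + (\text{length of }S\text{ projected perpendicular to }S)\cdot(\text{width of }R\text{ in the direction of }S)$ — concretely, sweeping $R$ along $S$ adds a parallelogram-type region whose area is $2\nu^\ast$ times the width of $R$ in the direction perpendicular to $(1,d)$, namely $\frac{2\mu^\ast d + 2\xi^\ast}{\sqrt{1+d^2}}\cdot\frac{2\nu^\ast\sqrt{1+d^2}}{1}$ up to the normalization, giving after simplification $|\widetilde H| = 4\mu^\ast\xi^\ast + 4\nu^\ast(d\mu^\ast + \xi^\ast) = 4(d\mu^\ast\nu^\ast + \mu^\ast\xi^\ast + \nu^\ast\xi^\ast)$. (I would verify this directly by the shoelace formula on the six vertices $\pm(\mu^\ast+\nu^\ast,\, d\nu^\ast+\xi^\ast)$, $\pm(\mu^\ast-\nu^\ast,\, -d\nu^\ast+\xi^\ast)$, $\pm(\mu^\ast+\nu^\ast,\, d\nu^\ast-\xi^\ast)$ — being careful about their cyclic order, which depends on the sign of $d\nu^\ast - \xi^\ast$, but the final area formula is the same in all cases.) Thus $|\widetilde H| \asymp d\mu^\ast\nu^\ast + \mu^\ast\xi^\ast + \nu^\ast\xi^\ast$, which is the first term in $\overline V_H$.

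Combining: in the non-wrapping regime $\mu^\ast+\nu^\ast\le 2\pi$ we get $|H|=|\widetilde H|\asymp d\mu^\ast\nu^\ast+\mu^\ast\xi^\ast+\nu^\ast\xi^\ast$, and here this term is $\le$ a constant times $d\nu^\ast+\xi^\ast$ (since $\mu^\ast,\nu^\ast$ are bounded), so it equals $\overline V_H$ up to constants. In the wrapping regime $\mu^\ast+\nu^\ast>2\pi$ we have the upper bound $|H|\le |\widetilde H| \le 4\pi(d\nu^\ast+\xi^\ast)$ (bounding $\mu^\ast\le\mu^\ast+\nu^\ast$ is not quite enough; rather use that $H$ lives on a cylinder of circumference $4\pi$, so $|H|\le 4\pi\cdot(\text{vertical extent})= 4\pi\cdot 2(d\nu^\ast+\xi^\ast)$), and the lower bound $|H|\gtrsim d\nu^\ast+\xi^\ast$ from the fiber argument above, and $d\nu^\ast+\xi^\ast$ is the minimum here, so again $|H|\asymp\overline V_H$. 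The main obstacle I anticipate is the bookkeeping in the wrapping regime: one must show cleanly that when $\widetilde H$ overlaps itself on the cylinder, the image $H$ still has area bounded below by $d\nu^\ast+\xi^\ast$ — i.e. that the projection does not collapse the vertical extent — and one must handle the degenerate sub-cases ($\nu^\ast=0$, or $\mu^\ast=0$, or $d=0$) where $\widetilde H$ becomes a rectangle or a parallelogram. None of this is deep, but it requires care with the case distinctions and with the exact meaning of ``the fiber over each $\mathbb S$-coordinate has measure $\gtrsim d\nu^\ast+\xi^\ast$'' (which uses that the slope-$d$ sweep moves the rectangle's vertical window continuously, so the union over the full horizontal period recovers a window of the full height).
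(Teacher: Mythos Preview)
Your overall strategy matches the paper's: compute $|\widetilde H|$ exactly, use $|H|\le|\widetilde H|$ and the trivial cylinder bound $|H|\le 4\pi\cdot 2(d\nu^\ast+\xi^\ast)$ for the upper estimate, and use injectivity of $\pi$ on $\widetilde H$ for the lower estimate when $\widetilde H$ is ``small enough''. Your computation $|\widetilde H|=4(d\mu^\ast\nu^\ast+\mu^\ast\xi^\ast+\nu^\ast\xi^\ast)$ is correct and matches the paper.

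However, your case split for the lower bound has a genuine gap. You split only on whether $\mu^\ast+\nu^\ast\le 2\pi$, and in the complementary ``wrapping'' case you assert both that $d\nu^\ast+\xi^\ast$ is the minimum in $\overline V_H$ and that the vertical fiber of $H$ has measure $\gtrsim d\nu^\ast+\xi^\ast$. Both claims are false. Take $\mu^\ast$ small (even $\mu^\ast=0$), $\nu^\ast$ large (say $10\pi$), $d$ large, and $\xi^\ast$ small (say $\xi^\ast\ll d$). Then $\mu^\ast+\nu^\ast>2\pi$ so you are in your wrapping regime, yet the first term $d\mu^\ast\nu^\ast+\mu^\ast\xi^\ast+\nu^\ast\xi^\ast\approx\nu^\ast\xi^\ast$ is much smaller than $d\nu^\ast+\xi^\ast$, and $|H|\le|\widetilde H|=4\nu^\ast\xi^\ast\ll d\nu^\ast+\xi^\ast$. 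So your proposed fiber lower bound cannot hold. Geometrically, $\widetilde H$ is a long thin slanted parallelogram that winds many times around the cylinder \emph{without} overlapping itself, because its horizontal cross-section at each fixed height $y$ is narrow.

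The missing idea is exactly this second injectivity criterion: the paper observes that the horizontal cross-sectional width of $\widetilde H$ at height $y$ is at most $\min\bigl(2(\mu^\ast+\nu^\ast),\,2(\mu^\ast+\xi^\ast/d)\bigr)$, so $\pi$ is injective on $\widetilde H$ whenever \emph{either} $\mu^\ast+\nu^\ast\le 2\pi$ \emph{or} $\mu^\ast+\xi^\ast/d\le 2\pi$, giving $|H|=|\widetilde H|$ in both cases. Only when both quantities exceed $2\pi$ does one need a separate argument, and in that genuinely overlapping regime the paper shows (by tracking the width $w(y)$ at each height) that $w(y)\ge\epsilon$ for $y$ in an interval of length $\gtrsim d\nu^\ast+\xi^\ast$, yielding $|H|\gtrsim d\nu^\ast+\xi^\ast$. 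Your vertical-fiber heuristic should be replaced by this horizontal-width-at-each-height analysis; once you add the second injectivity condition to your ``non-wrapping'' case, your counterexample falls there and the rest of your plan goes through.
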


\begin{proof}
  First, if $d = 0$ then $\widetilde{H}$ is simply a rectangle of width $2 (\mu^\ast + \nu^\ast)$ and height $2 \xi^\ast$, so we have $|H| = 2 \xi^\ast \min(2(\mu^\ast + \nu^\ast), 4 \pi) \asymp \overline{V}_H$.  Assume then that $d > 0$.
  
We begin with the upper bounds.  On the one hand, since $H$ is formed by wrapping $\widetilde{H}$ around the cylinder via the local isometry $\pi$, we have $|H| \leqslant |\widetilde{H}|$.  The latter can be computed explicitly with simple geometry, giving
\begin{equation}\label{nowrap-upper}
 |H| \leqslant |\widetilde{H}| = 4(d \mu^{\ast} \nu^{\ast} + \mu^{\ast} \xi^{\ast} +
 \nu^{\ast} \xi^{\ast}).
\end{equation}
On the other hand, as the overall height of $H$ is $2(d \nu^{\ast} + \xi^{\ast})$, it is contained in the truncated cylinder $\mathbb{S} \times [-(d \nu^{\ast} + \xi), d \nu^{\ast} + \xi]$. As $\mathbb{S}$ has circumference $4 \pi$, this gives
\begin{equation}\label{wrap-upper}
 |H| \leqslant 8 \pi (d \nu^{\ast} + \xi^{\ast}).
\end{equation}
Combining \eqref{nowrap-upper} and \eqref{wrap-upper} yields the upper bound of \eqref{H-volume-eqn}.

Now we turn to the lower bounds.  From Figure \ref{hexagon-fig}, we see that $\widetilde{H}$ is bounded by the six lines $x = \pm (\mu^\ast + \nu^\ast)$, $y= \pm (\xi^\ast +  d \nu^\ast)$, $y-dx=\pm(d \nu^\ast + \xi^\ast)$.  Thus the cross-sectional width of $\widetilde{H}$ at a particular value of $|y| \leqslant 2(\xi^\ast + d \nu^\ast)$ is given by
\begin{equation*}
  w(y) = \min\left(2(\mu^\ast + \nu^\ast), 2\left(\mu^\ast + \frac{\xi^\ast}{d}\right), \left(2 \mu^\ast + \nu^\ast + \frac{\xi^\ast - |y|}{d}\right)^+\right).
\end{equation*}
In particular, if $\mu^\ast + \nu^\ast \leqslant 2\pi$ or $\mu^\ast + \xi^\ast/d \leqslant 2\pi$, the maximum cross-sectional width of $\widetilde{H}$ is less than $4\pi$, and so $\pi$ maps $\widetilde{H}$ injectively onto $H$.  In this case we have $|H| = |\widetilde{H}| \geqslant \overline{V}_H$.

Suppose instead that we have $\mu^\ast + \nu^\ast \geqslant 2\pi$ and $\mu^\ast + \xi^\ast/d \geqslant 2\pi$.  Let $0 < \epsilon < 4\pi$ be a sufficiently small constant, to be chosen later.
With a little algebra, we see that we have $w(y) \geqslant \epsilon$ provided that
\begin{align*}
  |y| &\leqslant \xi^\ast + d(2 \mu^\ast + \nu^\ast - \epsilon).
\end{align*}
If $\mu^\ast \geqslant \epsilon$ then $w(y) \geqslant \epsilon$ for all $|y| \leqslant 2(\xi^\ast + d \nu^\ast)$, so
\begin{equation*}
  |H| \geqslant 2 \epsilon (\xi^\ast + d \nu^\ast) \geqslant 2 \epsilon \overline{V}_H.
\end{equation*}
On the other hand, if $\mu^\ast \leqslant \epsilon$ then we have $\nu^\ast \geqslant 2\pi-\epsilon$.  Choose $\epsilon$ so small that $2 \pi - \epsilon \geqslant 2 \epsilon$; then we have $\nu^\ast - \epsilon \geqslant \nu^\ast/2$, and so $w(y) \geqslant \epsilon$ whenever $|y| \leqslant \xi^\ast + d \nu^\ast / 2$.  Hence in this case,
\begin{equation*}
  |H| \geqslant \epsilon \cdot \left(\xi^\ast + d \nu^\ast / 2\right) \geqslant \frac{\epsilon}{2} \overline{V}_H.
\end{equation*}
\end{proof}

As for $H_\iota$, the following symmetry argument shows that it contains \emph{enough} of $H$.

\begin{lem}\label{H-iota-lower}
There is a constant $c$ such that $|H_\iota| \geqslant c |H|$ for all values of $d, \mu^\ast, \nu^\ast, \xi^\ast$.  Thus $|H_\iota| \geqslant c^{\prime} \overline{V}_H$ for some other  constant $c^{\prime}$.
\end{lem}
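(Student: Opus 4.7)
The plan is to reduce the estimate to one in the lifted plane and then bound the restricted area using the explicit geometry of $\widetilde{H}$. Since the strip $T_0 := [-\iota,\iota] \times \mathbb{R} \subset \mathbb{R}^2$ has width $2\iota < 4\pi$, the quotient map $\pi : \mathbb{R}^2 \to \mathbb{S}\times\mathbb{R}$ restricts to an isometry on $T_0$ whose image is the strip defining $H_\iota$. Consequently $\pi(\widetilde{H}\cap T_0) \subseteq H_\iota$, so
\[
|H_\iota| \;\geq\; |\widetilde{H} \cap T_0|.
\]
Together with Lemma~\ref{H-volume-estimate} it is enough to prove $|\widetilde{H}\cap T_0| \geq c\,\overline{V}_H$ for a universal $c>0$.

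The second step is to compute $|\widetilde{H}\cap T_0|$ directly from the profile of $\widetilde{H}$ already established in the proof of Lemma~\ref{H-volume-estimate}: the vertical cross-section height $h(x)$ is even, piecewise linear and concave on $[-(\mu^*+\nu^*),\mu^*+\nu^*]$, equal to $2(d\min(\mu^*,\nu^*)+\xi^*)$ on the flat middle $|x|\leq |\mu^*-\nu^*|$ and decreasing linearly to $2\xi^*$ on the sloped sides. Integrating over $[-\iota,\iota]$ gives a closed-form expression for $|\widetilde{H}\cap T_0|$, which I then compare term by term with $\overline{V}_H = \min(d\mu^*\nu^*+(\mu^*+\nu^*)\xi^*,\, d\nu^*+\xi^*)$.

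The comparison splits naturally into two regimes. In the no-wrap regime $\mu^*+\nu^*\leq 2\pi$, the map $\pi$ is injective on $\widetilde{H}$ so $|H|=|\widetilde{H}|$; the evenness and concavity of $h$ on an interval of length at most $4\pi$ give, by a routine concavity inequality, $|\widetilde{H}\cap T_0| \geq (\iota/(\mu^*+\nu^*))|\widetilde{H}| \geq (\iota/(2\pi))|H|$. In the wrap regime $\mu^*+\nu^* > 2\pi$, I instead use $|H|\leq 8\pi(d\nu^*+\xi^*)$ and aim for $|\widetilde{H}\cap T_0|\geq c(d\nu^*+\xi^*)$. The sub-case $\nu^*\leq \mu^*$ is immediate: the flat middle has the full height $2(d\nu^*+\xi^*)$ and $T_0$ captures at least one slice through it of width comparable to $\iota$.

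The main obstacle is the wrap regime with $\mu^*<\nu^*$, where the flat middle has height only $2(d\mu^*+\xi^*)$, which can be much smaller than $d\nu^*+\xi^*$. To recover the bound I will additionally exploit the translates $T_{4\pi k} := T_0 + (4\pi k,0)$ in $\mathbb{R}^2$: each nonempty intersection $\widetilde{H}\cap T_{4\pi k}$ projects into $H_\iota$ at vertical height $\approx 4\pi d k$ in the cylinder, so the images for distinct $k$ are vertically disjoint whenever $4\pi d > 2(d\mu^*+\xi^*)$, i.e.\ $d(2\pi-\mu^*)>\xi^*$. Since $\nu^*>\pi$ in this sub-case, there are $\asymp \nu^*/(2\pi)$ strips $T_{4\pi k}$ lying entirely inside the flat middle of $\widetilde{H}$, each contributing $\asymp 4\iota(d\mu^*+\xi^*)$ to $H_\iota$, and summing these disjoint contributions yields $|H_\iota|\geq c\nu^*(d\mu^*+\xi^*)$, which dominates $d\nu^*+\xi^*$ up to a universal constant. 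The borderline case where the disjointness fails, namely $\xi^*\gtrsim 2\pi d$, is treated by the single-strip estimate alone, since then $d\nu^*+\xi^*$ is comparable to $\xi^*$ while $h(x)\geq 2\xi^*$ throughout the hexagon, so already $|\widetilde{H}\cap T_0|\geq 4\iota\xi^*$ suffices.
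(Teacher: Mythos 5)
Your approach is genuinely different from the paper's. The paper proves the lemma with a clean covering argument: decomposing the parameters $\mu,\nu$ modulo a lattice, it exhibits $H$ as a subset of $(2M+1)^2$ explicit translates of $H_\iota$, giving $|H| \leqslant (2M+1)^2 |H_\iota|$ directly with no case analysis and no use of the explicit profile of $\widetilde{H}$ whatsoever. Your plan instead integrates the cross-sectional height $h(x)$ of $\widetilde{H}$ over the strip $[-\iota,\iota]$ and over its $4\pi$-translates, and then compares term by term with $\overline{V}_H$. The cross-section computation is correct, and the first few steps (reduction to $|\widetilde{H}\cap T_0|$, the concavity argument in the no-wrap regime, the sub-case $\nu^\ast \leqslant \mu^\ast$) are fine, but the later case analysis has genuine errors.

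The central problem is that in the wrap regime you set yourself the target $|\widetilde{H}\cap T_0| \geqslant c(d\nu^\ast+\xi^\ast)$, which is a strictly stronger statement than what is needed and is in fact false. Take for instance $d=1$, $\xi^\ast=0$, $\nu^\ast = 2\pi$, $\mu^\ast$ tiny: then $d\nu^\ast+\xi^\ast = 2\pi$ while $|\widetilde{H}\cap T_0| = 4\iota(d\mu^\ast+\xi^\ast)$ is arbitrarily small. What the lemma actually requires, via Lemma~\ref{H-volume-estimate}, is only $|H_\iota|\gtrsim \overline{V}_H$, and $\overline{V}_H$ is a minimum of two terms; in this example it equals $d\mu^\ast\nu^\ast+(\mu^\ast+\nu^\ast)\xi^\ast$, which is likewise tiny. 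Comparing with $\overline{V}_H$ directly would fix this. Relatedly, the asserted inequality ``$\nu^\ast(d\mu^\ast+\xi^\ast)$ dominates $d\nu^\ast+\xi^\ast$ up to a universal constant'' is false: take $\mu^\ast \to 0$, $\xi^\ast$ fixed, $d$ large. What is true (and would suffice) is $\nu^\ast(d\mu^\ast+\xi^\ast) = d\mu^\ast\nu^\ast + \nu^\ast\xi^\ast \geqslant \tfrac12\bigl(d\mu^\ast\nu^\ast+(\mu^\ast+\nu^\ast)\xi^\ast\bigr) \geqslant \tfrac12\overline{V}_H$ since $\nu^\ast\geqslant\mu^\ast$, so again the fix is to compare to $\overline{V}_H$, not to $d\nu^\ast+\xi^\ast$.

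Two further gaps in the multi-strip argument. First, the claim that there are $\asymp \nu^\ast/(2\pi)$ strips $T_{4\pi k}$ inside the flat middle requires $\nu^\ast-\mu^\ast\gtrsim\nu^\ast$, which can fail when $\mu^\ast$ and $\nu^\ast$ are both large and close together; you need to handle that sub-case separately (it actually goes through by the single strip, since $\mu^\ast\asymp\nu^\ast$ there, but this has to be said). Second, your treatment of the non-disjoint case is wrong: you assert that $\xi^\ast\gtrsim 2\pi d$ forces $d\nu^\ast+\xi^\ast\asymp\xi^\ast$, but this fails when $\nu^\ast$ is large (e.g.\ $d=1$, $\xi^\ast=100$, $\nu^\ast=10^4$). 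The correct observation in the non-disjoint case is that the slabs $\pi(\widetilde H\cap T_{4\pi k})$ chain together: consecutive $y$-intervals overlap precisely because disjointness fails, so their union is a single slab of $y$-extent $\approx 2(d\nu^\ast+\xi^\ast)$, giving the required lower bound $\gtrsim\iota(d\nu^\ast+\xi^\ast)\geqslant\iota\overline{V}_H$. With these repairs your computation would close, but as written there are several statements that are simply false, and the paper's covering argument sidesteps all of this bookkeeping.
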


\begin{proof}
Let $M \geqslant 4 \pi / \iota$ be an integer, so that the circle $\mathbb{S}$ can be partitioned into $M$ equal intervals, each of width $4 \pi / M \leqslant \iota$.

Suppose $(x,y) \in H$, so that $x = \mu + \nu$, $y = d \nu + \xi$ for some $|\mu| \leqslant \mu^{\ast}$, $|\nu| \leqslant \nu^{\ast}$, $|\xi| \leqslant \xi^{\ast}$. We can write
\begin{align*}
 \mu &= 4 \pi j_\mu + k_\mu \iota  + \mu^{\prime} \\
 \nu &= 4 \pi j_\nu + k_\nu \iota  + \nu^{\prime},
\end{align*}
where $j_\mu, j_\nu, k_\mu, k_\nu$ are integers, $|k_\mu|, |k_\nu| \leqslant M$, and $|\mu^{\prime}|, |\nu^{\prime}| \leqslant \iota/2$.  Also, by adjusting $j_\mu, j_\nu$ by $\pm 1$ if needed, we may ensure that $k_\mu, k_\nu$ are of the same signs as $\mu, \nu$ respectively.   Set $\bar{\mu} = \mu - k_\mu \iota$, $\bar{\nu} = \nu - k_\nu \iota$.  By the sign assumptions on $k_\mu, k_\nu$, we have $|\bar{\mu}| \leqslant |\mu| \leqslant \mu^{\ast}$ and likewise $|\bar{\nu}| \leqslant \nu^{\ast}$.  Hence $(\bar{x}, \bar{y}) := (\bar{\mu} + \bar{\nu}, d \bar{\nu} + \xi) \in H$.  Also, $\bar{x} = 4 \pi (j_\mu + j_\nu) + (\mu^{\prime} + \nu^{\prime})$, where $|\mu^{\prime} + \nu^{\prime}| \leqslant \iota$, so $(\bar{x}, \bar{y}) \in H_\iota$.

Thus, we have written
\begin{equation*}
(x, y) = (\bar{x}, \bar{y}) + ( (k_\mu + k_\nu) \iota, d k_\nu \iota),
\end{equation*}
where $|k_\mu|, |k_\nu| \leqslant M$ are integers.  Since $(x,y) \in H$ was arbitrary, we have shown
\begin{equation*}
 H \subseteq \bigcup_{k_\mu, k_\nu = -M}^M H_\iota + ( (k_\mu + k_\nu) \iota, d k_\nu \iota).
\end{equation*}
So $H$ is contained in the union of $(2M+1)^2$ translated copies of $H_\iota$, and thus $|H| \leqslant (2M+1)^2 |H_\iota|$.  This is the desired  statement when we take $c = 1/(2M+1)^2$.
\end{proof}

\begin{cor}\label{K-estimate}
  For any $\mu_i^\ast, \nu_i^\ast, \xi_i^\ast$, $i=1,2,3$, let
  \begin{equation*}
    K = \prod_{i=1}^3 H(\mu_i^{\ast}, \nu_i^{\ast}, \xi_i^{\ast}).
  \end{equation*}
  Then we have
  \begin{equation}\label{PsiK-asymp}
    \mu_0(\Psi(K)) \asymp \prod_{i=1}^3 \overline{V}_H(\mu_i^\ast, \nu_i^\ast, \xi_i^\ast)
  \end{equation}
  where the implicit constants are universal.
\end{cor}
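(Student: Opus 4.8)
The plan is to prove the upper and lower bounds in \eqref{PsiK-asymp} separately, in each case reducing to the single-hexagon estimates of Lemma~\ref{H-volume-estimate} and Lemma~\ref{H-iota-lower} applied to each of the three factors. The product structure of $K$, together with the fact that the Jacobian $J=|\cos x_2|$ depends on only one of the six chart coordinates, means no interaction between the three copies of $\mathbb{S}\times\mathbb{R}$ needs to be controlled.

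For the upper bound, I would use only that $J\leqslant 1$ everywhere, which gives $\mu_0(\Psi(K))\leqslant |K|$ for any Borel $K\subseteq(\mathbb{S}\times\mathbb{R})^3$. Since $K=\prod_{i=1}^3 H(\mu_i^\ast,\nu_i^\ast,\xi_i^\ast)$ is a product of three subsets of $\mathbb{S}\times\mathbb{R}$, its Lebesgue measure factors as $|K|=\prod_{i=1}^3 |H(\mu_i^\ast,\nu_i^\ast,\xi_i^\ast)|$, and applying the upper bound of Lemma~\ref{H-volume-estimate} to each factor yields $\mu_0(\Psi(K))\leqslant C^3\prod_{i=1}^3 \overline{V}_H(\mu_i^\ast,\nu_i^\ast,\xi_i^\ast)$.

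For the lower bound, the point is that $\Psi$ need not be injective on $K$ (the coordinates $x_i$ may wind around $\mathbb{S}$), so I would pass to the subbox where injectivity holds. Set $K_\iota=\prod_{i=1}^3 H_\iota(\mu_i^\ast,\nu_i^\ast,\xi_i^\ast)\subseteq K$. Every point of $K_\iota$ has $|x_1|,|x_2|,|x_3|\leqslant\iota$, so $K_\iota$ lies in the closure of $A_\iota$; discarding the Lebesgue-null boundary $\{|x_i|=\iota\text{ for some }i\}$ — harmless since $\Psi$ is smooth and so maps null sets to null sets — we may assume $K_\iota\subseteq A_\iota$, on which $\Psi$ is injective with $J\geqslant\frac{1}{2}$ by Notation~\ref{iota}. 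Hence
\[
\mu_0(\Psi(K))\;\geqslant\;\mu_0(\Psi(K_\iota))\;\geqslant\;\tfrac{1}{2}|K_\iota|\;=\;\tfrac{1}{2}\prod_{i=1}^3\bigl|H_\iota(\mu_i^\ast,\nu_i^\ast,\xi_i^\ast)\bigr|\;\geqslant\;\tfrac{1}{2}(c')^3\prod_{i=1}^3\overline{V}_H(\mu_i^\ast,\nu_i^\ast,\xi_i^\ast),
\]
the last step being Lemma~\ref{H-iota-lower} applied to each factor. Combined with the upper bound, this is exactly \eqref{PsiK-asymp} with universal implied constants.

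The only substantive input here is Lemma~\ref{H-iota-lower}: because the injectivity domain $A_\iota$ forces us to look only at the narrow vertical strip $H_\iota$ of each wrapped hexagon, we need the already-established fact that this strip still carries a fixed fraction of the area of the full hexagon $H$, equivalently of $\overline{V}_H$. Granting that lemma, there is no remaining obstacle, and the argument is a two-line sandwich.
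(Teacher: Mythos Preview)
Your argument is correct and matches the paper's proof essentially line for line: the upper bound via $J\leqslant 1$ and Lemma~\ref{H-volume-estimate}, the lower bound by passing to $K_\iota\subset A_\iota$ and invoking Lemma~\ref{H-iota-lower} on each factor. The only addition is your remark about the null boundary of $A_\iota$, which the paper silently ignores.
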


\begin{proof}
  For the upper bound, since the Jacobian $J$ of $\Psi$ is bounded by $1$, we have
  \begin{align*}
    \mu_0(\Psi(K)) &\leqslant |K| 
    = \prod_{i=1}^3 |H(\mu_i^{\ast}, \nu_i^{\ast}, \xi_i^{\ast})| 
    \leqslant C^3 \prod_{i=1}^3  \overline{V}_H(\mu_i^\ast, \nu_i^\ast, \xi_i^\ast).
  \end{align*}
  For the lower bound, let
  \begin{equation*}
    K_\iota = \prod_{i=1}^3 H_\iota(\mu_i^{\ast}, \nu_i^{\ast}, \xi_i^{\ast})
  \end{equation*}
  so that $K_\iota \subset K$.  Then $K_\iota \subset A_\iota$ (see Notation \ref{iota}), so that $J \geqslant \frac{1}{2}$ on $K_\iota$.  Hence
  \begin{align*}
    \mu_0(\Psi(K)) &\geqslant \mu_0(\Psi(K_\iota)) \\
    &\geqslant \frac{1}{2} |K_\iota| \\
    &= \frac{1}{2} \prod_{i=1}^3 |H_\iota(\mu_i^{\ast}, \nu_i^{\ast}, \xi_i^{\ast})| \\
    &\geqslant \frac{1}{2}c^3 \prod_{i=1}^3  \overline{V}_H(\mu_i^\ast, \nu_i^\ast, \xi_i^\ast).
  \end{align*}
\end{proof}

\begin{rem}
  Referring back to the notation of Theorem~\ref{volume-estimate}, we will eventually take $\mu_i^\ast = \rho_i$, $\nu_i^\ast = r/a_i$, $\xi_i^\ast = r$.  Then the right side of \eqref{PsiK-asymp} becomes $\overline{V}_g(r)$.
\end{rem}

\begin{rem}
  To obtain the uniform doubling result of Corollary~\ref{decoupled-uniformly-doubling} by itself, without the more detailed volume estimates of   Theorem~\ref{volume-estimate}, it would suffice to show that $|H|$ is a uniformly doubling function of $\mu_i^{\ast}, \nu_i^{\ast}, \xi_i^{\ast}$.  This can be done without the detailed estimates in Lemma~\ref{H-volume-estimate}, by simply applying Lemma~\ref{guivarch} to the projection $\pi : \mathbb{R}^2 \to \mathbb{S} \times \mathbb{R}$ and taking $A = \widetilde{H}$, $B = 2 \widetilde{H}$.
\end{rem}

\section{Lower bounds}\label{lower-sec}

The identity \eqref{identity-f-def} provides an explicit recipe for producing a group element $e^{x u_k}$ as a product of elements of the forms $e^{s u_i}$ and $e^{t u_j}$, where $(i,j,k)$ is any cyclic permutation of $(1,2,3)$.  This helps us describe explicitly a
set of group elements contained in a given ball $B_g(r)$, and thus to bound its volume from below.

\begin{lem}\label{identity-one-bracket}
There are universal constants $\eta > 0$, $M > 0$ such that the following holds.  Let $g$ be any decoupled left-invariant Riemannian metric on $G = \operatorname{SU}(2)$.  Following our usual notation, let $(v_1, v_2, v_3, f_1, f_2, f_3)$ be a decoupled Milnor basis for $g$, let $(a_1, a_2, a_3, d)$ be its parameters (see Definition \ref{parameters}), and let $u_i = v_i - d f_i$, so that $(u_1, u_2, u_3)$ is a standard Milnor basis.   Finally, let $r  > 0$, and set $m_i = \min(r/a_i, \eta)$ as in Notation \ref{m-rho-def}.  Then for all $|\sigma| \leqslant  m_j m_k$, we have $e^{\sigma u_i} \in B_g(Mr)$.  
\end{lem}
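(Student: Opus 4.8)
The plan is to use the closed-form identity from Corollary~\ref{identity-with-v} (the version of Theorem~\ref{long-identity} valid for a decoupled Milnor basis $v_1,v_2,v_3$) to express $e^{\sigma u_i}$ as a bounded-length word in the generators $e^{\pm s v_i}$, $e^{\pm t v_j}$, $e^{\pm \tau v_j}$. Recall that in $\mathcal{L}_{\operatorname{dec}}$, moving along $e^{tv_\ell}$ for time $t$ costs exactly $|t| a_\ell$ in $g$-length, since $\langle v_\ell, v_\ell\rangle_g = a_\ell^2$ and (by the decoupled structure) the $v_\ell$ are $g$-orthogonal. So any word $e^{s_1 v_{\ell_1}} \cdots e^{s_N v_{\ell_N}}$ represents a point within $g$-distance $\sum_k |s_k| a_{\ell_k}$ of the identity, by left-invariance and the triangle inequality.

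First I would fix the universal constant $\eta$ to be the one from Theorem~\ref{long-identity}, so that the lower bound $|f(s,t)| \geqslant c|st|$ holds for $|s|,|t|\leqslant\eta$. Given $|\sigma|\leqslant m_j m_k$ with $m_j = \min(r/a_j,\eta)$ and $m_k=\min(r/a_k,\eta)$, I want to solve $f(s,t) = \sigma$ (up to sign) with $|s|\leqslant m_k$ and $|t|\leqslant m_j$ — note the identity \eqref{identity-f-def} uses $v_i$ ("$u_1$") and $v_j$ ("$u_2$") to produce $e^{f(s,t)v_k}$... so I need to be careful about which cyclic role each index plays; I will instead apply the identity with the appropriate cyclic permutation so that the output direction is $u_i$ and the inputs are $v_j, v_k$. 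Since $|f(s,t)|\geqslant c|st|$ on $[-\eta,\eta]^2$ and $f$ is continuous with $f(s,0)=0$, for each fixed $s$ with $|s|=m_k$ the map $t\mapsto f(s,t)$ sweeps an interval containing $[-c\, m_k\, m_j,\, c\,m_k\,m_j]$ as $t$ ranges over $[-m_j,m_j]$ (using also $|f|\le |st|$-type control, or just the intermediate value theorem together with the lower bound at the endpoint). Hence, after rescaling $\sigma$ by the harmless universal constant $c$ — or rather, after noting $|\sigma|\le m_j m_k$ and absorbing $c$ into the final constant $M$ — there exist $|s|\leqslant m_k \le \eta$, $|t|\leqslant m_j\le\eta$ with $f(s,t)=\pm\sigma$; replacing $v_j$ by $-v_j$ flips the sign, so we get exactly $\sigma$.

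Then the word on the left of \eqref{identity-f-def} (with $v$'s) equals $e^{\sigma u_i}$ (using that the $u_1,u_2$-coefficients each sum to zero, as emphasized after Theorem~\ref{long-identity}, which is precisely what makes Corollary~\ref{identity-with-v} go through). Its $g$-length is at most
\begin{equation*}
  |\tau| a_j + \tfrac{|s|}{2} a_i + |t| a_j + |s| a_i + |t| a_j + \tfrac{|s|}{2} a_i + |\tau| a_j \leqslant 2|s| a_i + 4|t| a_j
\end{equation*}
using $|\tau|\leqslant |t|/2$ from Theorem~\ref{long-identity}. Now $|s| a_i \leqslant m_k a_i$; but I actually need this bounded by a constant times $r$, and $m_k a_i = \min(r/a_k,\eta) a_i$, which is \emph{not} obviously $\lesssim r$ when $a_i \gg a_k$. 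Here is where I would use the ordering convention $a_1\le a_2\le a_3$ from Proposition~\ref{parameters-well-defined}: I should arrange the cyclic triple $(i,j,k)$ and which generators I step along so that the coefficient multiplying the large $a$'s is correspondingly small. Concretely, $|s| a_i \le m_k a_i = \min(r,\eta a_k)\cdot (a_i/a_k)$; if $i \le k$ this is $\le \min(r,\eta a_k) \le r$... wait, need $a_i \le a_k$, i.e.\ $i\le k$ in the sorted order, which is not automatic for a cyclic permutation.

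The main obstacle, then, is exactly this bookkeeping: for a cyclic permutation $(i,j,k)$ of $(1,2,3)$ there is no guarantee $a_i\le a_k$, so the naive length bound $2|s|a_i + 4|t|a_j$ can be as large as (const)$\cdot(\eta a_3 + \eta a_3)$, not (const)$\cdot r$, when $a_i$ or $a_j$ is the largest parameter. I would resolve this by splitting into cases according to whether $r/a_\ell \geqslant \eta$ or $< \eta$ for each $\ell$. When $r/a_\ell < \eta$ for the relevant indices, $m_\ell = r/a_\ell$, so $|s| a_i \le (r/a_k) a_i$ and I additionally use $|\sigma|\le m_j m_k$ to trade: if $a_i$ is large, then $m_j m_k = (r/a_j)(r/a_k)$ is small, forcing $|\sigma|$ small, and I can instead step \emph{directly} along $e^{\sigma u_i}$-ish paths, or bound things via the trivial estimate $d_g(e,e^{\sigma u_i}) \le$ (something). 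Actually the cleanest route: observe $|s| a_i \le m_k a_i$ and, since $\sigma$ only needs $|\sigma| \le m_j m_k$, I may assume WLOG that the small side $s$ is chosen along the generator with the \emph{smaller} cost; by symmetry of the identity under swapping the roles of the two inner generators (one gets a conjugate identity producing $e^{f u_i}$ from $v_j,v_k$ either way), I can always route so that $|s| a_i \lesssim r$ and $|t| a_j \lesssim r$, possibly at the cost of a larger but still universal constant $M$. When instead $r/a_\ell \geq \eta$ for some $\ell$, then $m_\ell = \eta$ and $\eta a_\ell \le r$, so $|s|a_i \le \eta a_i \le r$ directly provided $i$ is that index; the remaining sub-cases are finite and handled by the same device. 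Assembling, every word has $g$-length $\le M r$ for a universal $M$, hence $e^{\sigma u_i}\in B_g(Mr)$, which is the claim. I would present the case analysis compactly, flagging that the decoupled hypothesis (orthogonality of the $v_i$ and exact cost $a_\ell$) and the uniform bounds $|\tau|\le|t|/2$, $|f|\ge c|st|$ from Theorem~\ref{long-identity} are the only ingredients beyond left-invariance.
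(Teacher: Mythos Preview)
Your overall approach is the same as the paper's: use Corollary~\ref{identity-with-v} to write $e^{\sigma u_i}$ as a seven-letter word in the $e^{\cdot v_j}$, $e^{\cdot v_k}$, then bound the $g$-length of each factor by $r$. But you have an indexing slip that creates a phantom obstacle and derails the second half of the argument.

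When you relabel \eqref{identity-f-def} so that the output is $e^{f(s,t)u_i}$, the two input directions are $v_j$ and $v_k$ (playing the roles of $u_1$ and $u_2$), \emph{not} $v_i$. So the length is
\[
  2|\tau|\,a_k + 2\cdot\tfrac{|s|}{2}\,a_j + 2|t|\,a_k + |s|\,a_j,
\]
with the correct labels. Now choose $|s|\leqslant m_j$ and $|t|\leqslant m_k$ (you had these swapped). Then $|s|\,a_j \leqslant m_j a_j = \min(r,\eta a_j)\leqslant r$ and $|t|\,a_k \leqslant m_k a_k \leqslant r$, and since $|\tau|\leqslant|t|/2$ each of the seven factors lies in $B_g(r)$. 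There is no case analysis on the ordering of the $a_\ell$; the ``main obstacle'' you describe (bounding $m_k a_i$ by $r$) never arises, because $a_i$ simply does not appear in the length estimate.

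Two smaller points. First, your handling of the constant $c$ from $|f(s,t)|\geqslant c|st|$ is vague; the paper's device is cleaner: first prove the result for $|\sigma|\leqslant c\,m_j m_k$ (getting $e^{\sigma u_i}\in B_g(7r)$), then for general $|\sigma|\leqslant m_j m_k$ write $e^{\sigma u_i}=(e^{\sigma u_i/N})^N$ with an integer $N\geqslant 1/c$ and conclude $e^{\sigma u_i}\in B_g(7Nr)$. Second, your sign argument can be simplified by noting $B_g(Mr)$ is symmetric under group inversion, so it suffices to treat $\sigma\geqslant 0$.
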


\begin{proof}
Notice that for all $|t| \leqslant r/a_i$ we have $e^{t v_i} \in B_g(r)$.  Indeed, the Maurer--Cartan form of the path $\gamma(s) = e^{s v_i}$, $0 \leqslant s \leqslant t$, is simply $v_i$, and $\sqrt{\langle v_i, v_i \rangle_{g}} = a_i$.

To simplify the notation, we will take $(i,j,k) = (3,1,2)$.  Let $f(s,t)$ be as defined in \eqref{f-formula}.  As shown in Theorem \ref{long-identity}, we can  choose $c, \eta$ such that $f(s,t) \geqslant cst$ for all $0 \leqslant s,t \leqslant \eta$.  

Let us suppose first that $|\sigma| \leqslant c m_1 m_2$.  We can suppose without loss of generality that $\sigma \geqslant 0$, since $B_g(Mr)$ is symmetric with respect to the group inverse.  Then we have
\begin{equation*}
0 \leqslant \sigma \leqslant c m_1 m_2 \leqslant f(m_1, m_2)
\end{equation*}
and so by continuity there exist $0 \leqslant s \leqslant m_1$ and $0 \leqslant t \leqslant m_2$ such that $f(s,t) = \sigma$.  By Corollary \ref{identity-with-v} (replacing $f_1, f_2$ by $df_1, df_2$), we have
\begin{equation*}
    e^{\sigma u_3} = e^{f(s,t) u_3} = e^{-\tau v_2} e^{s v_1/2} e^{t v_2} e^{-s v_1} e^{-t v_2} e^{s
   v_1/2} e^{\tau v_2}
\end{equation*}
where $|\tau| \leqslant t/2$.  Since $s \leqslant r/a_1$ and $t \leqslant r/a_2$, each of the seven factors on the right-hand side is an element of $B_g(r)$.  As such, by the left-invariance of the metric $d_g$ and the triangle inequality, we have $e^{\sigma u_3} \in B_{g}(7r)$.

If we only have $|\sigma| \leqslant m_1 m_2$, then by choosing any integer $N \geqslant 1/c$, we can write $e^{\sigma u_3} = (e^{\sigma u_3/N})^N$, and conclude that $e^{\sigma u_3} \in B(7Nr)$.  This is the desired result when we set $M = 7N$.
\end{proof}

\begin{lem}\label{identity-two-brackets}
There are universal constants $\eta > 0$, $M' > 0$ such that the following holds.  Let $g$ be any decoupled left-invariant Riemannian metric on $G = \operatorname{SU}(2) \times \mathbb{R}^3$, with decoupled basis and parameters denoted as before.  Let $r  > 0$, and define $\rho_i = \rho_i(r, a_i, a_2, a_3, \eta)$ as in Notation \ref{m-rho-def}.  Then for all $|\sigma| \leqslant \rho_i$, we have $e^{\sigma u_i} \in B_g(e, M'r)$.  
\end{lem}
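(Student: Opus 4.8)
The plan is to bootstrap from Lemma~\ref{identity-one-bracket}, which already gives us $e^{\sigma u_i} \in B_g(Mr)$ for $|\sigma| \leqslant m_j m_k$, using the iterated bracket relations $u_i = [v_j, [v_i, v_j]] = [[v_k, v_i], v_k]$ recorded after \eqref{rhoi-def}. Recall that $\rho_i = m_j m_k + m_i m_j^2 + m_i m_k^2$, so it suffices to show separately that each of the three ranges $|\sigma| \leqslant m_j m_k$, $|\sigma| \leqslant m_i m_j^2$, and $|\sigma| \leqslant m_i m_k^2$ lands inside a ball of radius a universal multiple of $r$; the full range $|\sigma| \leqslant \rho_i$ then follows by writing a given $\sigma$ as a sum of (at most) three pieces, one from each range, moving along the corresponding group elements in succession, and invoking left-invariance and the triangle inequality to add up the radii. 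The first range is exactly Lemma~\ref{identity-one-bracket}, so the new content is the two ``third-order'' ranges.

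For the third-order terms, I would apply Corollary~\ref{identity-with-v} a second time, nested inside itself. Concretely, Lemma~\ref{identity-one-bracket} (applied with the roles of the indices permuted) shows that $e^{\tau u_j} \in B_g(Mr)$ whenever $|\tau| \leqslant m_k m_i$; combining this with the fact that $e^{t v_j} \in B_g(r)$ for $|t| \leqslant r/a_j$, i.e. $|t| \leqslant m_j$ after the $\eta$-cutoff, and feeding both families into the identity \eqref{identity-f-def} in the form $e^{f(s,t) u_i} = e^{-\tau' v_j}\cdots e^{\tau' v_j}$ with $v_j$ playing the role of ``$v_1$'' and the element generating $u_j$-displacement playing the role of ``$v_2$'', produces $e^{\sigma u_i}$ for $|\sigma| \lesssim m_j \cdot (m_k m_i) = m_i m_j m_k$. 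That is not quite what we want — we want the exponents $m_i m_j^2$ and $m_i m_k^2$. The correct pairing is to use $u_i = [v_j,[v_i,v_j]]$: one moves in the $v_j$ direction (cost controlled by $m_j$) and in the $[v_i,v_j] = u_k$-ish direction, but to reach $u_k$-displacement of size $\mu$ via Lemma~\ref{identity-one-bracket} one needs $|\mu| \leqslant m_i m_j$, and then bracketing once more with $v_j$ gives $u_i$-displacement up to $m_j \cdot (m_i m_j) = m_i m_j^2$; symmetrically with $v_k$ in place of $v_j$ one gets $m_i m_k^2$. So the two inner applications of the bracketing identity are: (i) generate $e^{\mu u_k}$, $|\mu|\leqslant m_i m_j$, inside $B_g(M r)$ by Lemma~\ref{identity-one-bracket}; (ii) feed $e^{\mu u_k}$ (rescaled through a fixed number of repetitions, as in the $N$-fold trick in that lemma's proof) together with $e^{t v_j}$, $|t|\leqslant m_j$, into \eqref{identity-f-def} to get $e^{\sigma u_i}$, $|\sigma| \lesssim m_i m_j^2$, inside a ball of universally-bounded radius.

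The one technical wrinkle I anticipate is that the identity \eqref{identity-f-def}, and its consequence $|f(s,t)|\geqslant c|st|$, is stated only for $e^{su_1}, e^{tu_2}$-type factors, i.e. for genuine one-parameter subgroups in two of the Milnor directions; in step (ii) one of the two ``directions'' is not a one-parameter subgroup but rather the already-constructed word representing $e^{\mu u_k}$. This is handled by Corollary~\ref{identity-with-v}'s underlying observation (the coefficients of $u_1$ and $u_2$ on the left of \eqref{identity-f-def} sum to zero), together with the fact that $e^{\mu u_k}$ is \emph{exactly} a one-parameter subgroup element in the $u_k$ direction — so we really are in the hypotheses of Lemma~\ref{identity-one-bracket} applied with the cyclic permutation sending $k$ to the ``output'' slot — and by the $N$-fold subdivision trick that converts a bound valid for small exponents into one valid up to the cutoff $\eta$. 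So the structure is: invoke Lemma~\ref{identity-one-bracket} as a black box for the $m_jm_k$ piece; for each of $m_im_j^2$ and $m_im_k^2$, invoke it again (with permuted indices) to get an intermediate $e^{\mu u_k}$ or $e^{\mu u_j}$ in a bounded ball, then apply the commutator identity once more against the corresponding $v$-direction, tracking that the seven (or $7N$) factors each lie in $B_g(r)$; finally concatenate the three constructions and set $M'$ to be the resulting universal constant (the sum of the three radii bounds).

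The main obstacle I expect is purely bookkeeping: getting the exponents to come out as $m_i m_j^2$ and $m_i m_k^2$ rather than $m_i m_j m_k$ requires choosing the \emph{nesting order} of the two bracketings correctly (bracket $v_j$ against $u_i$ first, then against $v_j$ again — not $v_j$ against $v_k$), and one must be careful that in the second application of \eqref{identity-f-def} the ``$s$''-type parameter is the one bounded by $m_j$ (length-$r$ motion in the $v_j$ direction) while the ``$t$''-type parameter is the small displacement $\mu$ realizing $u_k$, so that $f \geqslant c\,\mu\,m_j$ with the right factors. Everything else — symmetry in $\pm\sigma$, the triangle-inequality addition of radii, absorbing the fixed number of factors $7$, the repetition constant $N$, and the $\eta$-cutoff — is routine and parallels the proof of Lemma~\ref{identity-one-bracket} verbatim.
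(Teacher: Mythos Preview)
Your proposal is correct and takes essentially the same approach as the paper: apply the identity \eqref{identity-f-def} via Corollary~\ref{identity-with-v} with one factor $e^{s v_j}$, $|s|\leqslant m_j$, and the other factor $e^{t u_k}$, $|t|\leqslant m_i m_j$ (the latter lying in $B_g(Mr)$ by Lemma~\ref{identity-one-bracket}), to reach $e^{\sigma u_i}$ for $|\sigma|\leqslant c\,m_i m_j^2$, then use the $N$-fold trick to remove $c$ and sum the three pieces. Your ``technical wrinkle'' is a non-issue---$e^{\mu u_k}$ is literally the one-parameter subgroup element, so Corollary~\ref{identity-with-v} applies with the central shift on the $u_k$-side set to zero---and the $\eta$-cutoff on $t$ is automatic since $m_i m_j \leqslant \eta^2 \leqslant \eta$ once $\eta\leqslant 1$.
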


\begin{proof}
Again we take $(i,j,k)=(3,1,2)$.  Let $\eta$ be small enough to satisfy both Theorem \ref{long-identity} and Lemma \ref{identity-one-bracket}, and let $c, M$ respectively be as given by those same statements.   As before, we assume that $\sigma \geqslant 0$.

Let us first suppose that $\sigma \leqslant c m_1^2 m_3$.  Assuming without loss of generality that $\eta \leqslant 1$, we have $m_1 m_3 \leqslant \eta^2 \leqslant \eta$, so that
\[
    0 \leqslant \sigma \leqslant c m_1^2 m_3 \leqslant f(m_1, m_1 m_3)
\]
and hence there exist $0 \leqslant s \leqslant m_1$, $0 \leqslant t \leqslant m_1 m_3$ such that $\sigma = f(s,t)$.  Thus by Corollary \ref{identity-with-v} (replacing $f_1$ with $d f_1$ and $f_2$ with $0$) we have
\begin{equation*}
    e^{\sigma u_3} = e^{-\tau u_2} e^{s v_1/2} e^{t u_2} e^{-s v_1} e^{-t u_2} e^{s v_1/2} e^{\tau u_2}
\end{equation*}
where $|\tau| \leqslant t/2$.  We clearly have $e^{s u_1/2}, e^{-s u_1} \in B_g(r)$, and by Lemma \ref{identity-one-bracket}, we have $e^{\pm t u_2}, e^{\pm \tau u_2} \in B_g(Mr)$.  Hence $e^{\sigma u_3} \in B((4M+3)r)$.

If we only have $\sigma \leqslant m_1^2 m_3$, then as before, choosing an integer $N$ with $N \geqslant 1/c$, we have $e^{\sigma u_3} = (e^{\sigma u_3/N})^N \in B_g((4M+3)Nr)$.

If $\sigma \leqslant m_2^2 m_3$, we get the same conclusion by the same argument.

If $\sigma \leqslant m_1 m_2$, then we already showed in Lemma \ref{identity-one-bracket} that $e^{\sigma u_3} \in B_g(Mr)$.

So if $\sigma \leqslant \rho_3 = m_1 m_2 + m_1^2 m_3 + m_2^2 m_3$, then by writing $\sigma$ as a sum of three numbers bounded by $m_1 m_2$, $m_1^2 m_3$, $m_2^2 m_3$ respectively, we have $e^{\sigma u_3}$ as the product of three elements which are in $B_g(Mr)$, $B_g((4M+3)Nr)$, $B_g((4M+3)Nr)$ respectively.  Hence $e^{\sigma u_3} \in B_g(M' r)$ with $M' = M + 2(4M+3)N$.
\end{proof}
  
\begin{theo}\label{ball-contain-inner}
There exist universal constants $\eta, M^{\prime\prime}$ such that the following holds for every decoupled left-invariant Riemannian metric $g$ on $\operatorname{SU}(2) \times \mathbb{R}^3$, with decoupled basis and parameters denoted as before, and every $r > 0$.  Define  $m_i, \rho_i$ in terms of $r, a_i, \eta$ as in Notation~\ref{m-rho-def}, and let $H^i =H_\iota\left(\rho_i, \frac{r}{a_i}, r \right) \subset \mathbb{S} \times \mathbb{R}$ following Notation~\ref{H-notation}.  Let $K =\prod_{i=1}^3 H^i \subset (\mathbb{S} \times \mathbb{R})^3$.  Then we have $\Psi(K) \subset B_g(M^{\prime}r)$.
\end{theo}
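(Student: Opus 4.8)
The plan is to unwind the definition of $\Psi(K)$ and reduce the claim to Lemma~\ref{identity-two-brackets} together with two elementary length bounds; essentially all the genuine work has already been carried out in Section~\ref{identities-sec} and in Lemmas~\ref{identity-one-bracket}--\ref{identity-two-brackets}. A point of $\Psi(K)$ has the form
\[
  \Psi(x_1,y_1,x_2,y_2,x_3,y_3) = e^{x_3 u_3} e^{y_3 f_3}\, e^{x_2 u_2} e^{y_2 f_2}\, e^{x_1 u_1} e^{y_1 f_1},
\]
where for each $i$ the pair $(x_i,y_i)$ lies in $H^i = H_\iota(\rho_i, r/a_i, r) \subset H(\rho_i, r/a_i, r)$. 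By the definition of $H$ (Notation~\ref{H-notation}) we may therefore choose, for each $i$, reals $\mu_i,\nu_i,\xi_i$ with $|\mu_i|\le\rho_i$, $|\nu_i|\le r/a_i$, $|\xi_i|\le r$ such that $x_i = \mu_i+\nu_i$ and $y_i = d\nu_i + \xi_i$ (reading this decomposition off a lift of $(x_i,y_i)$ to the plane; the fact that $x_i$ lives on the circle $\mathbb S$ rather than on $\mathbb R$ is harmless, since $x\mapsto e^{xu_i}$ factors through $\mathbb S$).

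First I would rewrite each pair of factors $e^{x_i u_i} e^{y_i f_i}$ in terms of the directions $u_i$, $v_i$, $f_i$. Since $f_i$ is central, $e^{x_i u_i} e^{y_i f_i} = e^{x_i u_i + y_i f_i}$, and
\[
  x_i u_i + y_i f_i = \mu_i u_i + \nu_i(u_i + d f_i) + \xi_i f_i = \mu_i u_i + \nu_i v_i + \xi_i f_i;
\]
because $u_i$, $v_i = u_i + d f_i$ and $f_i$ pairwise commute, this exponential factors as $e^{\mu_i u_i} e^{\nu_i v_i} e^{\xi_i f_i}$. Hence every element of $\Psi(K)$ is a product of nine factors, three of each of the types $e^{\mu_i u_i}$, $e^{\nu_i v_i}$, $e^{\xi_i f_i}$, $i=1,2,3$.

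Next I would bound $d_g(e,\cdot)$ for each of these nine factors. For $e^{\nu_i v_i}$, the straight path $s\mapsto e^{sv_i}$ has constant Maurer--Cartan form $v_i$ of $g$-length $a_i = \sqrt{\langle v_i,v_i\rangle_g}$, so $d_g(e, e^{\nu_i v_i}) \le |\nu_i| a_i \le r$; likewise $d_g(e, e^{\xi_i f_i}) \le |\xi_i| \le r$ since the $f_i$ are $g$-unit vectors. For $e^{\mu_i u_i}$, since $|\mu_i|\le\rho_i$, Lemma~\ref{identity-two-brackets} gives $e^{\mu_i u_i}\in B_g(M'r)$, provided $\eta$ has been fixed small enough to satisfy the hypotheses of that lemma (and of Lemma~\ref{identity-one-bracket} and Theorem~\ref{long-identity}). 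By left-invariance of $d_g$ and the triangle inequality, a product of three factors each lying in $B_g(M'r)$ and six factors each lying in $B_g(r)$ lies in $B_g\big((3M'+6)r\big)$; taking $M'' = 3M'+6$ finishes the proof.

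The statement is thus essentially bookkeeping, and I do not anticipate a genuine obstacle: the one closed-form identity that makes the argument work, producing $e^{\sigma u_i}$ from bounded-length motions in the $v_i$, $u_j$, $u_k$ directions, is already packaged in Lemmas~\ref{identity-one-bracket}--\ref{identity-two-brackets}. I would only remark that the $H_\iota$ truncation plays no role in this direction --- it is needed for the matching volume \emph{lower} bound, via the inclusion $K_\iota\subset A_\iota$ guaranteeing that $\Psi$ is injective with $J\ge\tfrac12$ on the relevant set --- so proving the inclusion even for the larger sets $H(\rho_i,r/a_i,r)$ would give the stated claim a fortiori.
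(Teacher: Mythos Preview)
Your proof is correct and follows essentially the same approach as the paper: decompose each $(x_i,y_i)$ as $(\mu_i+\nu_i,\,d\nu_i+\xi_i)$, factor $e^{x_i u_i}e^{y_i f_i}=e^{\mu_i u_i}e^{\nu_i v_i}e^{\xi_i f_i}$ using centrality of $f_i$, bound the three types of factors by Lemma~\ref{identity-two-brackets} and the trivial length estimates, and apply the triangle inequality to the nine-fold product, arriving at the same constant $M''=3(M'+2)=3M'+6$. Your additional remarks---the explicit justification that $u_i,v_i,f_i$ pairwise commute, and the observation that the $H_\iota$ truncation is irrelevant for this direction---are correct and helpful clarifications.
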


\begin{proof}
  If $(x_i, y_i) \in H^i$, we can write $x_i = \mu_i + \nu_i$ and $y_i = d \nu_i + \xi_i$, where $|\mu_i| \leqslant \rho_i$, $|\nu_i| \leqslant \frac{r}{a_i}$, and $|\xi_i| \leqslant r$.  Lemma~\ref{identity-two-brackets} states that, when $\eta, M^\prime$ are appropriately chosen, we have that $|\mu_i| \leqslant  \rho_i$ implies $e^{\mu_i u_i} \in B_g(M^\prime r)$.  The condition $|\nu_i| \leqslant \frac{r}{a_i}$ implies $e^{\nu_i v_i} \in B_g(r)$, since as before the Maurer--Cartan form of $t \mapsto e^{t v_i}$ is $v_i$, and likewise $e^{\xi_i f_i} \in B_g(r)$.  Hence
  \begin{equation*}
    e^{x_i u_i} e^{y_i f_i} = e^{\mu_i u_i} e^{\nu_i v_i} e^{\xi_i f_i} \in B_g((M^\prime +2)r)
  \end{equation*}
for each $i$, and thus
  \begin{equation*}
 \Psi(x_1, y_1, x_2, y_2, x_3, y_3) = \prod_{i=1}^3 e^{x_i u_i}
 e^{y_i f_i} \in B_g(3(M^{\prime}+2)r)
  \end{equation*}
for all $(x_1, y_1, x_2, y_2, x_3, y_3) \in \prod_{i=1}^3 H^i = K$. This is the desired statement, with $M^{\prime\prime} = 3(M^{\prime}+2)$.
\end{proof}

\begin{cor}\label{lower-cor}
There exist universal constants $\eta, c$ such that for all decoupled left-invariant Riemannian metrics $g$ on $\operatorname{SU}(2) \times \mathbb{R}^3$ and all $r > 0$, we have
\begin{equation}
\mu_0(B_g(r)) \geqslant c \overline{V}_g(r)
\end{equation}
where $\overline{V}_g(r)$ is defined in terms of $r$, $\eta$, and the parameters of $g$, as in Theorem \ref{volume-estimate}.
\end{cor}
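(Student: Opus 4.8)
The plan is to assemble the conclusion from the pieces already in place: Theorem~\ref{ball-contain-inner} locates an explicit set inside the ball, and Lemma~\ref{H-iota-lower} (together with Notation~\ref{iota}) computes its $\mu_0$-volume from below; a final rescaling promotes the estimate from $B_g(M^{\prime\prime}r)$ to $B_g(r)$.

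First I would fix the universal constants $\eta, M^{\prime\prime}$ provided by Theorem~\ref{ball-contain-inner}. For a decoupled metric $g$ with parameters $(a_1,a_2,a_3,d)$ and a fixed $r>0$, form $H^i = H_\iota(\rho_i, r/a_i, r)$ and $K = \prod_{i=1}^3 H^i$; Theorem~\ref{ball-contain-inner} gives $\Psi(K)\subset B_g(M^{\prime\prime}r)$. Each $H^i$ lies in the strip $[-\iota,\iota]\times\mathbb{R}$ by construction, so $K\subset A_\iota$, whence $\Psi$ is injective on $K$ with Jacobian $J\geqslant \tfrac12$ there (Notation~\ref{iota}). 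Therefore
\[
\mu_0(B_g(M^{\prime\prime}r)) \;\geqslant\; \mu_0(\Psi(K)) \;\geqslant\; \tfrac12 |K| \;=\; \tfrac12 \prod_{i=1}^3 |H_\iota(\rho_i, r/a_i, r)|.
\]

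Next, apply Lemma~\ref{H-iota-lower} to each factor: $|H_\iota(\rho_i, r/a_i, r)| \geqslant c^{\prime}\,\overline{V}_H(\rho_i, r/a_i, r, d)$. Substituting $\mu^\ast=\rho_i$, $\nu^\ast=r/a_i$, $\xi^\ast=r$ into the definition \eqref{VH-def} of $\overline{V}_H$ gives exactly $\min\bigl(d\rho_i\tfrac{r}{a_i}+\rho_i r+\tfrac{r^2}{a_i},\, d\tfrac{r}{a_i}+r\bigr)$, which is the $i$-th factor of $\overline{V}_g(r)$ as defined in Theorem~\ref{volume-estimate}. Hence $\mu_0(B_g(M^{\prime\prime}r)) \geqslant \tfrac12 (c^{\prime})^3\, \overline{V}_g(r)$ for every $r>0$. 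Replacing $r$ by $r/M^{\prime\prime}$ yields $\mu_0(B_g(r)) \geqslant \tfrac12 (c^{\prime})^3\, \overline{V}_g(r/M^{\prime\prime})$. Since $\overline{V}_g$ is monotone increasing in $r$ and uniformly doubling (Lemma~\ref{unif-doubling-abstract} together with the discussion following Theorem~\ref{volume-estimate}), writing $\overline{D}$ for its doubling constant and $N=\lceil \log_2 M^{\prime\prime}\rceil$ we get $\overline{V}_g(r) \leqslant \overline{V}_g\bigl(2^N (r/M^{\prime\prime})\bigr) \leqslant \overline{D}^N\, \overline{V}_g(r/M^{\prime\prime})$, so $\mu_0(B_g(r)) \geqslant \tfrac12 (c^{\prime})^3 \overline{D}^{-N}\, \overline{V}_g(r)$, which is the claim with $c = \tfrac12 (c^{\prime})^3 \overline{D}^{-N}$.

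I do not expect a genuine obstacle here, since every nontrivial input has been established earlier. The only point requiring care is bookkeeping: checking that the set $K$ of Theorem~\ref{ball-contain-inner} is precisely the ``$K_\iota$'' appearing in the proof of Corollary~\ref{K-estimate} under the substitutions $\mu_i^\ast=\rho_i$, $\nu_i^\ast=r/a_i$, $\xi_i^\ast=r$, and that the resulting product $\prod_i \overline{V}_H$ literally coincides with $\overline{V}_g(r)$. The only step that is not purely cosmetic is the final rescaling, which genuinely needs the (already proved) uniform doubling of the model function $\overline{V}_g$; without it one would control only $\mu_0(B_g(M^{\prime\prime}r))$ rather than $\mu_0(B_g(r))$.
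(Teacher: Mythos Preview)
Your proposal is correct and follows essentially the same route as the paper's own proof: invoke Theorem~\ref{ball-contain-inner} to get $\Psi(K)\subset B_g(M''r)$, use $K\subset A_\iota$ to bound $\mu_0(\Psi(K))\geqslant \tfrac12|K|$, apply Lemma~\ref{H-iota-lower} factorwise to obtain $\tfrac12(c')^3\overline{V}_g(r)$, then rescale via the uniform doubling of $\overline{V}_g$. Your write-up is in fact slightly more explicit than the paper's about the final doubling step.
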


\begin{proof}
  We continue to follow the notation of Theorem~\ref{ball-contain-inner}.  Since $K \subset A_\iota$ (see Notation \ref{iota}), we have
  \begin{align*}
 \mu_0(B_g(M^{\prime\prime} r)) &\geqslant \mu_0(\Psi(K)) \\
 &\geqslant \frac{1}{2} |K| \\
 &= \frac{1}{2} \prod_{i=1}^3 \left|H_\iota \left(\rho_i, \frac{r}{a_i},
 r\right)\right| \\
 &\geqslant \frac{1}{2} (c')^3 \overline{V}_g(r)
  \end{align*}
  by Lemma \ref{H-iota-lower}.

  Replacing $r$ by $\frac{r}{M^{\prime\prime}}$, we have shown that $\mu_0(B_g(r)) \geqslant c \overline{V}_g(\frac{r}{M^{\prime\prime}})$ for all $r > 0$.  But since $\overline{V}_g$ is a uniformly doubling function (see Lemma \ref{unif-doubling-abstract} and subsequent remarks), we have $\overline{V}_g(2r) \leqslant D \overline{V}_g(r)$ for some universal constant $\overline{D}$.  Thus, if we choose an integer $p$ large enough that $2^p \geqslant M^{\prime\prime}$, we have $\overline{V}_g(\frac{r}{M^{\prime\prime}}) \geqslant \overline{V}_g(2^{-p} r) \geqslant \overline{D}^{-p} \overline{V}_g(r)$, which yields the conclusion when we take $c = \frac{1}{2} (c')^3 \overline{D}^{-p}$.
\end{proof}

\section{Upper bounds}\label{upper-sec}

For this section, recall that when considering the parameters $(a_1, a_2, a_3, d)$ of a decoupled left-invariant Riemannian metric $g$ on $\mathrm{SU}(2) \times \mathbb{R}^3$, we assume without loss of generality that $a_1 \leqslant a_2 \leqslant a_3$.  The upper bounds divide into two cases, depending on whether $r \leqslant \eta a_2$ or $r \geqslant \eta a_2$. In the latter case, the ball $B_g(r)$ effectively fills up all of the $\operatorname{SU}(2)$ factor and so the estimate becomes much simpler.

\begin{theo}\label{ball-contain-outer}
There exist universal constants $\eta, C$ such that the following holds for every decoupled left-invariant Riemannian metric $g$ on  $\operatorname{SU}(2) \times \mathbb{R}^3$, with its decoupled basis and parameters denoted as before, and every $0 < r \leqslant \eta a_2$.  Define  $m_i, \rho_i$ in terms of $r,a_i,\eta$ as in Notation~\ref{m-rho-def}, and let $H^i = H\left(C \rho_i, \frac{r}{a_i}, r \right) \subset \mathbb{S} \times \mathbb{R}$ following Notation~\ref{H-notation}.  Let $K = \prod_{i=1}^3 H^i \subset (\mathbb{S} \times \mathbb{R})^3$.  Then  we have $B_g(r) \subset \Psi(K)$.
\end{theo}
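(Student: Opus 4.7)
The plan is as follows. Pick any $p \in B_g(r)$ and fix a unit-speed minimizing geodesic $\gamma : [0, \ell] \to G$ from $e$ to $p$, so that $\ell \leqslant r$. I will write $p = \Psi(x_1, y_1, x_2, y_2, x_3, y_3)$ in the coordinates of Notation~\ref{Psi-def}, and for each $i$ produce a decomposition $x_i = \mu_i + \nu_i$, $y_i = d\nu_i + \xi_i$ satisfying $|\mu_i| \leqslant C \rho_i$, $|\nu_i| \leqslant r/a_i$, $|\xi_i| \leqslant r$.

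Expand the Maurer--Cartan form of $\gamma$ in the decoupled Milnor basis as $c_\gamma(t) = \sum_i \alpha_i(t) v_i + \sum_i \beta_i(t) f_i$. Arclength parametrization forces $\sum_i a_i^2 \alpha_i^2 + \sum_i \beta_i^2 = 1$ a.e., so $|a_i \alpha_i|, |\beta_i| \leqslant 1$ pointwise. I would set
\[
\nu_i := \int_0^\ell \alpha_i(t)\,dt, \qquad \xi_i := \int_0^\ell \beta_i(t)\,dt,
\]
for which Cauchy--Schwarz immediately gives $|\nu_i| \leqslant \ell/a_i \leqslant r/a_i$ and $|\xi_i| \leqslant \ell \leqslant r$. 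Because the $f_i$ are central in $\mathfrak{g}$ and $G$ splits as a direct product, the $\mathbb{R}^3$-component of $p$ equals $\sum_i (d\nu_i + \xi_i) f_i$, so the identity $y_i = d\nu_i + \xi_i$ holds automatically.

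The core task is to bound $\mu_i := x_i(\ell) - \nu_i$. Parametrize $\gamma_{\operatorname{SU}(2)}(t) = \Psi_{\operatorname{SU}(2)}(x_1(t), x_2(t), x_3(t))$; by Corollary~\ref{c.4.8} the residuals are
\begin{align*}
x_1'(t) - \alpha_1(t) &= (\alpha_2 \sin x_1 + \alpha_3 \cos x_1) \tan x_2, \\
x_2'(t) - \alpha_2(t) &= -2 \alpha_2 \sin^2(x_1/2) - \alpha_3 \sin x_1, \\
x_3'(t) - \alpha_3(t) &= \alpha_2 \sin x_1 \sec x_2 + \alpha_3 (\cos x_1 \sec x_2 - 1),
\end{align*}
and, crucially, the equations governing $x_1, x_2$ are decoupled from $x_3$. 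I would run a continuity/bootstrap argument on the maximal initial subinterval where $|x_1(t)| \leqslant 2 m_1$ and $|x_2(t)| \leqslant 2 m_2$. Under this hypothesis the Taylor bounds $|\sin x| \leqslant |x|$, $|1 - \cos x| \leqslant x^2/2$, $|\tan x| \leqslant 2|x|$, $|\sec x - 1| \leqslant x^2$ apply, and after pairing each $|\alpha_j|$ factor against its $|x_k| \leqslant 2 m_k$ factor and integrating, using $\int_0^\ell |\alpha_j|\,dt \leqslant \ell/a_j \leqslant m_j$, one obtains with some universal constant $C$
\[
|\mu_1(\ell)| \leqslant C(m_1 m_2^2 + m_2 m_3), \qquad |\mu_2(\ell)| \leqslant C(m_1^2 m_2 + m_1 m_3),
\]
each of which is bounded by a universal multiple of the corresponding $\rho_i$. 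Since $m_1, m_2 \leqslant \eta$ yields $\rho_i \leqslant 3\eta m_i$ for $i = 1, 2$, choosing $\eta$ small enough forces $C \rho_i \leqslant m_i$ and preserves the bootstrap: $|x_i(t)| \leqslant |\nu_i(t)| + |\mu_i(t)| \leqslant 2 m_i$ on all of $[0, \ell]$. A final integration of the $x_3$-residual under the now-established bounds on $x_1, x_2$ gives $|\mu_3(\ell)| \leqslant C(m_1 m_2 + m_1^2 m_3 + m_2^2 m_3) \leqslant C\rho_3$. The three monomials $m_j m_k$, $m_i m_j^2$, $m_i m_k^2$ composing $\rho_i$ correspond exactly to the bracket identities $u_i = [v_j, v_k] = [v_j, [v_i, v_j]] = [[v_k, v_i], v_k]$ driving the lower-bound construction in Lemma~\ref{identity-two-brackets}.

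The main obstacle is the book-keeping in the Taylor expansion, especially for $\mu_1$: the naive estimate $|\sin x_1| \leqslant 1$ in the term $\alpha_2 \sin x_1 \tan x_2$ would produce an $m_2^2$ contribution that is \emph{not} dominated by $\rho_1$ when $m_2$ is much larger than $m_3$ and comparable to $m_1$. Invoking instead the bootstrap estimate $|\sin x_1| \leqslant |x_1| \leqslant 2 m_1$ converts this apparent second-order expression into a genuine third-order commutator of size $m_1 m_2^2$, fitting inside $\rho_1$. Carefully matching each cross term of the Taylor expansion to one of the three allowed $\rho_i$-monomials, while simultaneously verifying that the bootstrap hypothesis on $x_1, x_2$ is preserved, is the technical heart of the argument.
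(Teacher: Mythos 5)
Your overall strategy coincides with the paper's: parametrize a short curve to $p$, expand its Maurer--Cartan form in the decoupled Milnor basis, set $\nu_i, \xi_i$ to be the integrals of the $v_i$- and $f_i$-coefficients, and bound the residuals $\mu_i = x_i - \nu_i$ via the ODE coming from Corollary~\ref{c.4.8}. You have also correctly identified the crucial difficulty: the estimate $|\sin x_1| \leqslant 1$ is too weak, and one truly needs something of order $m_1$.

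However, the bootstrap as you have set it up has a real gap. You want to propagate the hypothesis $|x_1(t)| \leqslant 2 m_1$ by arguing $|x_1| \leqslant |\nu_1| + |\mu_1| \leqslant 2m_1$. But $|\nu_1(t)| \leqslant r/a_1$, and when $r/a_1 > \eta$ we have $m_1 = \eta$ while $\nu_1$ may well exceed $\eta$. So the inequality $|\nu_1| \leqslant m_1$ fails precisely in the regime where the cutoff in $m_1$ is active, and the bootstrap cannot be closed. (For $x_2$ there is no such issue: the hypothesis $r \leqslant \eta a_2$ forces $m_2 = r/a_2$, so $|\nu_2| \leqslant m_2$ automatically; but more to the point, no bootstrap is needed at all for $x_2$, since $x_2' = \cos x_1\,\alpha_2 - \sin x_1\,\alpha_3$ has coefficients bounded by $1$ unconditionally, giving $|x_2| \leqslant 2r/a_2 = 2m_2 < \pi/3$ directly.)

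The paper's way around this is not to bound $|x_1|$ by a multiple of $m_1$ at all. Instead, once $|x_2| < \pi/3$ is established directly, the coefficients $\sin x_1, \cos x_1, \tan x_2, \sec x_2$ in the ODE for $x_1$ are all bounded by universal constants, and one gets $|x_1| \leqslant 5r/a_1$ by a direct integration, still with no bootstrap. The desired estimate on the sine is then obtained by
\[
|\sin x_1| \leqslant \min(|x_1|,1) \leqslant \min\!\left(5\tfrac{r}{a_1},\,1\right) \leqslant \max\!\left(5,\tfrac{1}{\eta}\right) m_1,
\]
which is $\leqslant C m_1$ even though $|x_1|$ itself can be much larger than $m_1$. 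A similar remark applies to $|\cos x_1 - 1| \leqslant \min(x_1^2, 2) \leqslant C m_1^2$. So the key observation is that the boundedness of sine and cosine by $1$, combined with the Taylor bound, yields the $m_1$-scale estimate without ever controlling $x_1$ at that scale. Replacing your bootstrap with this direct two-step bound on $(x_2, x_1)$ and then the $\min$ estimate would repair the argument, and at that point your remaining bookkeeping for $\mu_1, \mu_2, \mu_3$ matches the paper's computation \eqref{mu-ODE} term by term.
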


\begin{proof}
Suppose that $p \in B_g(r)$, so that there exists a smooth curve $\gamma :[0,1] \to G$ from $e$ to $p$ with length $\ell_g[\gamma] < r$.  We may write the Maurer--Cartan form $c_\gamma  : [0,1] \to \mathfrak{g}$  in the decoupled Milnor basis $(v_1, v_2, v_3, f_1, f_2, f_3)$ as
\begin{align*}
 c_\gamma(t) &= \sum_{i=1}^3 \alpha_i(t) v_i + \beta_i(t) f_i \\
 &= \sum_{i=1}^3 \alpha_i(t) u_i + (d \alpha_i(t) + \beta_i(t)) f_i.
\end{align*}
Reparametrizing by constant speed and recalling the orthogonality of the decoupled Milnor basis $(v_i, f_i)$, we can assume that
  \begin{equation*}
 g(c_\gamma(t), c_\gamma(t)) = \sum_{i=1}^3 a_i^2 \alpha_i(t)^2 +
 \beta_i(t)^2 \leqslant r^2, \qquad 0 \leqslant t \leqslant 1
  \end{equation*}
so that in particular, $\alpha_i(t) \leqslant r/a_i$ and $\beta_i(t) \leqslant r$ for all $t$.  Set $\nu_i(t) = \int_0^t \alpha_i(s)\,ds$ and $\xi_i(t) = \int_0^t \beta_i(s)\,ds$.

We now wish to find functions $x_i, y_i : [0,1] \to \mathbb{R}$ satisfying
\begin{equation*}
 \gamma(t) = \Psi(x_1(t), y_1(t), x_2(t), y_2(t), x_3(t), y_3(t)).
\end{equation*}
Take $y_i(t) = d \nu_i(t) + \xi_i(t)$ for $i=1,2,3$; then it is clear that both sides agree in their $\mathbb{R}^3$ components. Next, we will take $(x_1(t), x_2(t), x_3(t))$ to be the solution of
  \eqref{e.MC-ODE}, with $x_i(0)=0$.  To see that a solution exists, note the equation for $x_2(t)$ is
\begin{equation}
x_2^{\prime}(t) = (\cos x_1(t)) \alpha_2(t) - (\sin x_1(t)) \alpha_3(t)
\end{equation}
so that in particular
\begin{equation*}
 |x_2^{\prime}(t)| \leqslant |\alpha_2(t)| + |\alpha_3(t)| \leqslant \frac{r}{a_2} +
 \frac{r}{a_3} \leqslant 2 \frac{r}{a_2}.
\end{equation*}
Choosing for instance $\eta <\pi/6$, integrating yields that $|x_2(t)| \leqslant 2 r / a_2 < 2\eta < \pi/3$ when  it exists, and thus $|\tan x_2(t)| \leqslant |\sec x_2(t)| \leqslant 2$.  Hence  all the coefficients in \eqref{e.MC-ODE} are bounded in absolute  value by $2$, so in fact the solution exists for $0 \leqslant t \leqslant 1$.  Moreover, we have
\begin{align*}
|x_1^{\prime}(t)| &\leqslant |\alpha_1(t)| + 2 |\alpha_2(t)| + 2 |\alpha_3(t)|
\leqslant 5 \frac{r}{a_1},
\\
|x_3^\prime(t)| & \leqslant 2 |\alpha_2(t)| + 2 |\alpha_3(t)| \leqslant 4 \frac{r}{a_2}
\end{align*}
and we integrate to obtain $|x_1(t)| \leqslant 5r/a_1$, $|x_3(t)| \leqslant 4r/a_2$ for all $0 \leqslant t \leqslant 1$.

Now set $\mu_i(t) = x_i(t) - \nu_i(t)$, so that $\mu_i^{\prime}(t) = x_i^{\prime}(t)- \alpha_i(t)$, and thus $(\mu_1(t), \mu_2(t), \mu_3(t))$ satisfies the ODE system
  \begin{equation}\label{mu-ODE}
 \begin{pmatrix}
   \mu_1^{\prime} \\
   \mu_2^{\prime} \\
   \mu_3^{\prime}
 \end{pmatrix} =
 \begin{pmatrix}
    0 & \sin x_1 \tan x_2 & \cos x_1 \tan x_2 \\
 0 & \cos x_{1} - 1 & -\sin x_{1} \\
 0 & \sin x_{1} \sec x_2 & \cos x_{1} \sec x_2 - 1 \\
 \end{pmatrix}
 \begin{pmatrix}
   \alpha_1 \\ \alpha_2 \\ \alpha_3
 \end{pmatrix}.
  \end{equation}
Following Notation~\ref{m-rho-def} with $m_i =\min(r/a_i, \eta)$, so that $m_2 = r/a_2$, $m_3 = r/a_3$, we observe that, for some sufficiently large $C$, we have
\begin{align*}
|\sin x_1| &\leqslant \min(|x_1|, 1) \leqslant \min \left(5 \frac{r}{a_1}, 1\right) \leqslant C m_1 \\
 |\cos x_1 - 1| &\leqslant \min(x_1^2, 2) \leqslant C m_1^2 \\
 |\tan x_2| &\leqslant 2 |x_2| \leqslant C m_2,
\end{align*}
since we noted above that $|x_1| \leqslant Cr/a_1$, $|x_2| \leqslant C r/a_2 = C m_2 \leqslant C \eta$, and can choose $\eta$ small enough to ensure $|\tan x_2| \leqslant 2 |x_2|$.  Finally, we also have
\begin{align*}
 |\cos x_1 \sec x_2 - 1| &= |\cos x_1 (\sec x_2 - 1) + (\cos x_1 -
 1)| \\
 &\leqslant C (m_2^2 + m_1^2) \leqslant 2 C m_1^2,
\end{align*}
where we used $|\cos x_1| \leqslant 1$, $|\sec x_2 - 1| \leqslant C x_2^2 \leqslant C' m_2^2$ for $|x_2| \leqslant \pi/3$, and $|\cos x_1 - 1| \leqslant C m_1^2$ as noted above. So we obtain the differential inequalities
\begin{align*}
|\mu_1^{\prime}(t)| \leqslant C m_1 m_2 |\alpha_2(t)| + C m_2 |\alpha_3(t)|, 
 \\
|\mu_2^{\prime}(t)| \leqslant C m_1^2 |\alpha_2(t)| + C m_1 |\alpha_3(t)|, 
\\
|\mu_3^{\prime}(t)| \leqslant 2 C m_1 |\alpha_2(t)| + 2 C m_1^2 |\alpha_3(t)| .
\end{align*}
Integrating and using that $|\alpha_i(t)| \leqslant r/a_i = m_i$ for $i=2,3$, we see that
\begin{align*}
|\mu_1(1)| \leqslant C m_1 m_2^2 + C m_2 m_3 \leqslant C \rho_1,
\\
|\mu_2(1)| \leqslant C m_1^2 m_2 + C m_1 m_3 \leqslant C \rho_2, 
\\
|\mu_3(1)| \leqslant C m_1 m_2 + C m_1^2 m_3 \leqslant C \rho_3.
\end{align*}
We have thus shown that for each $i$ we can write $x_i(1) = \mu_i(1)+ \nu_i(1)$, $y_i(1) = d \nu_i(1) + \xi_i(1)$, where $|\mu_i(1)| \leqslant C \rho_i$, $|\nu_i(1)| \leqslant r/a_i$, $|\xi_i(1)| \leqslant r$.  That is, we  have $(x_i(1), y_i(1)) \in H^i = H(C \rho_i, r/a_i, r)$, which is to say that $p \in \Psi(K)$.
\end{proof}

To handle the case $r \geqslant \eta a_2$, a much more trivial bound suffices.

\begin{pro}\label{linear-upper}
For every decoupled left-invariant Riemannian metric $g$ on  $\operatorname{SU}(2) \times \mathbb{R}^3$, with its decoupled basis and parameters denoted as before, and every $r > 0$, we have
\begin{equation*}
 B_g(r) \subset \operatorname{SU}(2) \times \prod_{i=1}^3 \left[ - \left(d
   \frac{r}{a_i} + r\right), d
   \frac{r}{a_i} + r\right]
\end{equation*}
and so $\mu_0(B_g(r)) \leqslant 8 \mu_{0, \operatorname{SU}(2)}(\operatorname{SU}(2)) \prod_{i=1}^3 (d \frac{r}{a_i} + r)$, where $\mu_{0, \operatorname{SU}(2)}(\operatorname{SU}(2))$ is the measure of $\operatorname{SU}(2)$ with respect to the $\operatorname{SU}(2)$ factor of the  fixed Haar measure $\mu_0$.
\end{pro}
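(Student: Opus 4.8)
\section*{Proof proposal}

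The plan is to bound only the $\mathbb{R}^3$-component of any point reachable by a short curve, leaving the $\operatorname{SU}(2)$-factor entirely unconstrained (which is harmless, as $\operatorname{SU}(2)$ is compact). First I would take $p \in B_g(r)$ and a smooth curve $\gamma : [0,1] \to G = \operatorname{SU}(2) \times \mathbb{R}^3$ from $e$ to $p$ of length at most $r$, reparametrized to have constant speed. Writing the Maurer--Cartan form in the decoupled Milnor basis $(v_1, v_2, v_3, f_1, f_2, f_3)$ as $c_\gamma(t) = \sum_{i=1}^3 \alpha_i(t) v_i + \beta_i(t) f_i$, the orthogonality of the basis together with the constant-speed reparametrization give $\sum_{i=1}^3 a_i^2 \alpha_i(t)^2 + \sum_{i=1}^3 \beta_i(t)^2 \leqslant r^2$ for all $t$, hence $|\alpha_i(t)| \leqslant r/a_i$ and $|\beta_i(t)| \leqslant r$ pointwise.

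Next, rewriting $c_\gamma(t) = \sum_{i=1}^3 \alpha_i(t) u_i + \bigl(d \alpha_i(t) + \beta_i(t)\bigr) f_i$ and using that each $f_i$ is central --- so the $\mathbb{R}^3$-component of $\gamma(t)$ evolves independently of the $\operatorname{SU}(2)$-component and is obtained by integrating the $f_i$-coefficients --- the $f_i$-coordinate of $p = \gamma(1)$ equals $y_i = \int_0^1 \bigl(d \alpha_i(t) + \beta_i(t)\bigr)\,dt$, so $|y_i| \leqslant d\frac{r}{a_i} + r$. This establishes the claimed inclusion $B_g(r) \subset \operatorname{SU}(2) \times \prod_{i=1}^3 \bigl[-(d\frac{r}{a_i}+r), d\frac{r}{a_i}+r\bigr]$.

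For the volume statement, since $\mu_0$ is the product of the normalized Haar measure on $\operatorname{SU}(2)$ with Lebesgue measure on $\mathbb{R}^3$, taking the $\mu_0$-measure of the product set on the right-hand side gives $\mu_0(B_g(r)) \leqslant \mu_{0,\operatorname{SU}(2)}(\operatorname{SU}(2)) \cdot \prod_{i=1}^3 2\bigl(d\frac{r}{a_i} + r\bigr) = 8\,\mu_{0,\operatorname{SU}(2)}(\operatorname{SU}(2)) \prod_{i=1}^3 \bigl(d\frac{r}{a_i} + r\bigr)$.

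There is essentially no obstacle here; the only point requiring care is the bookkeeping of the change of basis between $(v_i, f_i)$ and $(u_i, f_i)$ and the observation that centrality of the $f_i$ lets us read off the $\mathbb{R}^3$-coordinates of $\gamma(1)$ as literal integrals of the corresponding Maurer--Cartan coefficients. This is the same computation that appears in the proof of Theorem~\ref{ball-contain-outer}, but without any of the ODE analysis of the $\operatorname{SU}(2)$-coordinates.
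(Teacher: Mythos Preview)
Your proposal is correct and follows essentially the same approach as the paper's proof: decompose the Maurer--Cartan form in the decoupled Milnor basis, use orthogonality and the constant-speed bound to get $|\alpha_i(t)| \leqslant r/a_i$ and $|\beta_i(t)| \leqslant r$, then integrate the central $f_i$-components to bound the $\mathbb{R}^3$-coordinate of $\gamma(1)$. The paper writes this as $\gamma(t) = \gamma_0(t) \prod_i \exp\bigl((d\nu_i(t)+\xi_i(t))f_i\bigr)$ with $\nu_i, \xi_i$ the integrals of $\alpha_i, \beta_i$, which is exactly your observation that centrality lets one read off the $\mathbb{R}^3$-component as a literal integral.
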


\begin{proof}
As in the proof of Theorem~\ref{ball-contain-outer} above, write $p = \gamma(1)$, where
  \begin{align*}
 c_\gamma(t)  &= \sum_{i=1}^3 \alpha_i(t) u_i + (d \alpha_i(t) + \beta_i(t)) f_i
  \end{align*}
  with $|\alpha_i(t)| \leqslant r/a_i$, $|\beta_i(t)| \leqslant r$.  Tracking only
  the coefficients of $f_i$, we can write
  \begin{equation*}
\gamma(t) = \gamma_0(t) \prod_{i=1}^3 \exp((d \nu_i(t) + \xi_i(t)) f_i),
\end{equation*}
where $\gamma_0$ is a curve in $\operatorname{SU}(2)$, and as before $\nu_i(t) = \int_0^t \alpha_i(s)\,ds$, $\xi_i(t) = \int_0^t \beta_i(s)\,ds$.  In particular, $|\nu_i(1)| \leqslant r/a_i$ and
  $|\xi_i(1)| \leqslant r$, so the coefficient of $f_i$ is at most $d r/a_i
  + r$ in absolute value.
\end{proof}

\begin{cor}\label{cor-upper}
There exist universal constants $\eta, C$ such that for all $g \in \mathcal{L}_{\operatorname{dec}}(\operatorname{SU}(2) \times \mathbb{R}^3)$, we have
\begin{equation}
 \mu_0(B_g(r)) \leqslant C \overline{V}_g(r),
\end{equation}
where $\overline{V}_g(r)$ is as defined in Theorem~\ref{volume-estimate}.
\end{cor}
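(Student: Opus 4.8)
The plan is to assemble the two preceding results, treating separately the regimes $r \leqslant \eta a_2$ and $r \geqslant \eta a_2$, where as usual $a_1 \leqslant a_2 \leqslant a_3$. Fix $\eta$ to be the constant of Theorem~\ref{ball-contain-outer}, shrunk if necessary so that $\eta \leqslant 1$. The first thing to record is that, with the substitution $\mu_i^\ast = \rho_i$, $\nu_i^\ast = r/a_i$, $\xi_i^\ast = r$, the quantity $\overline{V}_H$ of \eqref{VH-def} becomes exactly the $i$-th factor of $\overline{V}_g(r)$:
\[
  \overline{V}_H\!\left(\rho_i, \tfrac{r}{a_i}, r, d\right) = \min\!\left(d \rho_i \tfrac{r}{a_i} + \rho_i r + \tfrac{r^2}{a_i},\; d\tfrac{r}{a_i} + r\right),
\]
so that $\overline{V}_g(r) = \prod_{i=1}^3 \overline{V}_H(\rho_i, r/a_i, r, d)$.

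In the regime $0 < r \leqslant \eta a_2$, Theorem~\ref{ball-contain-outer} gives $B_g(r) \subset \Psi(K)$ with $K = \prod_{i=1}^3 H(C\rho_i, r/a_i, r)$, and then Corollary~\ref{K-estimate} yields $\mu_0(B_g(r)) \leqslant \mu_0(\Psi(K)) \lesssim \prod_{i=1}^3 \overline{V}_H(C\rho_i, r/a_i, r, d)$. It remains only to absorb the factor $C$. Write $\overline{V}_H = \min(A, B)$, where $A = d\mu^\ast\nu^\ast + \mu^\ast\xi^\ast + \nu^\ast\xi^\ast = \mu^\ast(d\nu^\ast + \xi^\ast) + \nu^\ast\xi^\ast$ is affine in $\mu^\ast$ with nonnegative slope and intercept, and $B = d\nu^\ast + \xi^\ast$ does not involve $\mu^\ast$. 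Then $A(C\mu^\ast) \leqslant C A(\mu^\ast)$ for $C \geqslant 1$, and hence $\min(CA, B) \leqslant C\min(A, B)$, because $\min(CA, B) \leqslant B = \min(A,B)$ in the case $\min(A,B) = B$. Thus $\overline{V}_H(C\rho_i, \cdot, \cdot, d) \leqslant C\,\overline{V}_H(\rho_i, \cdot, \cdot, d)$, and combining the displays gives $\mu_0(B_g(r)) \leqslant C' \overline{V}_g(r)$ for a universal $C'$ in this regime.

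In the regime $r \geqslant \eta a_2$, the crude bound of Proposition~\ref{linear-upper} gives $\mu_0(B_g(r)) \leqslant 8\,\mu_{0,\operatorname{SU}(2)}(\operatorname{SU}(2)) \prod_{i=1}^3\bigl(d\tfrac{r}{a_i} + r\bigr)$, where the total $g_0$-mass of $\operatorname{SU}(2)$ is a fixed finite number. It suffices to show $\overline{V}_g(r) \gtrsim \prod_{i=1}^3\bigl(d\tfrac{r}{a_i} + r\bigr)$ uniformly. Since $a_1 \leqslant a_2$, the hypothesis forces $r/a_1 \geqslant r/a_2 \geqslant \eta$, so $m_1 = m_2 = \eta$; reading off the definition of $\rho_i$ in Notation~\ref{m-rho-def} and using $m_3 \leqslant \eta \leqslant 1$ one finds $\rho_i \geqslant \eta^3$ for each $i$. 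Writing the $i$-th factor of $\overline{V}_g(r)$ as $\min(A_i, B_i)$ with $B_i = d\tfrac{r}{a_i} + r$, we have $A_i \geqslant d\rho_i\tfrac{r}{a_i} + \rho_i r = \rho_i B_i \geqslant \eta^3 B_i$, so $\min(A_i, B_i) \geqslant \eta^3 B_i$ and therefore $\overline{V}_g(r) \geqslant \eta^9 \prod_i B_i$. Hence $\mu_0(B_g(r)) \leqslant 8\eta^{-9}\mu_{0,\operatorname{SU}(2)}(\operatorname{SU}(2))\,\overline{V}_g(r)$ in this regime. Taking the larger of the two constants gives a single universal $C$, completing the proof.

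There is no substantial obstacle here: the result is essentially the packaging together of Theorem~\ref{ball-contain-outer} (small $r$) and Proposition~\ref{linear-upper} (large $r$). The only points needing care are the bookkeeping of the extra factor $C$ appearing inside $\overline{V}_H$ in the small-$r$ case, and the observation that once $r \geqslant \eta a_2$ the quantities $\rho_i$ are bounded below by a fixed positive constant, which is precisely what makes the linear bound of Proposition~\ref{linear-upper} comparable to $\overline{V}_g(r)$ in that regime.
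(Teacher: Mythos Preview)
Your proof is correct and follows essentially the same approach as the paper: split into the regimes $r \leqslant \eta a_2$ (invoke Theorem~\ref{ball-contain-outer} and Corollary~\ref{K-estimate}) and $r \geqslant \eta a_2$ (invoke Proposition~\ref{linear-upper} and use $m_1 = m_2 = \eta$ to force $\rho_i \geqslant \eta^3$). Your treatment is in fact more explicit than the paper's, which in the small-$r$ case simply says the result follows ``after adjusting constants as we have done before''; your careful absorption of the extra factor $C$ inside $\overline{V}_H$ fills in exactly that detail.
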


\begin{proof}
Taking $C, \eta$ from Theorem~\ref{ball-contain-outer}, we have the result for $r \leqslant \eta a_2$, after adjusting constants as we have done before.  Now suppose that $r \geqslant \eta a_2$.  We now have $\frac{r}{a_1} \geqslant \frac{r}{a_2} \geqslant \eta$, and so in the notation of Theorem~\ref{volume-estimate}, we have $m_1 = m_2 = \eta$, and thus for some small constant $c$ we have $\rho_i \geqslant c$ for $i=1,2,3$, where we could take $c = \eta^3$ when assuming $\eta < 1$. This implies
  \begin{equation*}
 d \rho_i \frac{r}{a_i}
 + \rho_i r + \frac{r^2}{a_i} \geqslant c \left(d\frac{r}{a_i} + r\right)
  \end{equation*}
  where we can simply discard the $\frac{r^2}{a_i}$ term.  Thus, in this case we have
  \begin{equation*}
 \overline{V}_g(r) \geqslant c^3 \prod_{i=1}^3 \left(d \frac{r}{a_i} + r\right)
  \end{equation*}
which when combined with Proposition~\ref{linear-upper} shows $\mu_0(B_g(r)) \leqslant C \overline{V}_g(r)$ for an appropriate choice of $C$.
\end{proof}

\section{Consequences and applications}\label{s.consequences}

While the uniform doubling property for Lie groups is interesting from a purely geometric standpoint, it has significant implications for analysis and probability on such a group.  As was mentioned in Section~\ref{s.intro}, the key fact is that for any unimodular Lie group $G$ with a left-invariant metric $g$, if $(G,g)$ is volume doubling with some constant $D_g$, then it satisfies the scale-invariant Poincar\'e inequality \eqref{poincare-intro} with that same constant $D_g$.  See \cite[Section~5.6.1]{Saloff-CosteBook2002} for a proof. Thus, if $G$ is \emph{uniformly} doubling with constant $D(G)$, then every left-invariant metric $g \in \mathfrak{L}(G)$ satisfies \eqref{poincare-intro} with that same constant; we could say that \eqref{poincare-intro} holds \emph{uniformly} over $g \in \mathfrak{L}(G)$.  As such, a corollary of the results of this paper is that this is the case for any group $G$ that is a quotient of $\mathrm{SU}(2) \times \mathbb{R}^n$ for some $n$, where indeed the constant $D(G)$ depends only on $n$.  

In \cite[Section~8]{EldredgeGordinaSaloff-Coste2018}, we discussed several other functional inequalities on $(G,g)$ which follow from volume doubling via the Poincar\'e inequality, for which it can likewise be shown that each one holds with a constant depending only on $D_g$.  Note we do not claim that the \emph{best} constant is a function of $D_g$ alone.  In a uniformly doubling group $G$, such inequalities therefore hold uniformly over all $g \in \mathfrak{L}(G)$ in the same sense as above.  Here, we simply list some of these statements for the groups studied in this paper, and refer to \cite{EldredgeGordinaSaloff-Coste2018} and references therein for details.

We remark that in \cite{EldredgeGordinaSaloff-Coste2018}, our focus was on \emph{compact} Lie groups, and thus the results for uniformly doubling groups were stated there under the hypothesis that the group is compact.  In the present paper, some of the groups for which we have shown uniform doubling are not compact (e.g. $\operatorname{SU}(2) \times \mathbb{R}^n$).  However, many of the results cited in \cite{EldredgeGordinaSaloff-Coste2018} do in fact hold when the group is merely unimodular, as can be seen by referring to the statements or proofs in the original sources.  The exceptions would be the results dealing with the discrete spectrum of the Laplacian, which typically do not exist when $G$ is not compact, and those relying on the fact that the total volume of a compact group $G$ is finite. For example, such results in \cite[Section~8.7]{EldredgeGordinaSaloff-Coste2018} include ergodicity and  describing the rate of convergence of the heat kernel to the equilibrium in terms of the spectral gap.

We introduce some notation.  Let $(G,g)$ be a unimodular Lie group equipped with a left-invariant Riemannian metric $g$.  In what follows:
\begin{itemize}
\item $\mu$ is the Riemannian volume measure induced by $g$, which is bi-invariant;
\item $d$ is the Riemannian distance, and $B(x,r)$ the corresponding ball of radius $r$ centered at $x$;
\item $\mathrm{diam}$ is the diameter of $G$ with respect to $d$;
\item $V(r) = \mu(B(e,r))$ is the volume growth function;
\item $\mathbf{V} = \mu(G) = V(\infty)$ is the volume of $G$ itself;
\item $\nabla$ is the Riemannian gradient induced by $G$, and we write $|\nabla f|^2$ for $g(\nabla f, \nabla f)$;
\item $\Delta$ is the (positive semidefinite) Laplacian;
\item $P_t = e^{-t \Delta}$ is the heat semigroup;
\item $p_t$ is the heat kernel;
\item $0 = \lambda_0 < \lambda_1 \leqslant \lambda_2 \leqslant \dots$ are the eigenvalues of $\Delta$, counted with multiplicity (in a setting where $\Delta$ has discrete spectrum);
\item $\lambda = \lambda_1$ is the spectral gap, the lowest non-zero eigenvalue of $\Delta$;
\item $\mathfrak{W}(s) = \#\{ i : \lambda_i < s\}$ is the Weyl spectral counting function.
\end{itemize}
Note these all depend implicitly on $g$, but we omit any subscript $g$ for easier reading.

\begin{theo}
  Let $n \geqslant 0$.  There are constants $C(n), c(n)$, etc., varying from line to line but depending only on $n$, such that the following statements hold for every Lie group $G$ of the form $G = (\mathrm{SU}(2) \times \mathbb{R}^n) / H$ for some closed normal subgroup $H \leqslant \mathrm{SU}(2) \times \mathbb{R}^n$, and every left-invariant Riemannian metric $g$ on $G$.
  \begin{enumerate}
  \item The scale-invariant Poincar\'e inequality
      \begin{equation*}
    \int_{B(x,r)} |f - f_{x,r}|^2 \,d\mu \leqslant 2 r^2 C(n) \int_{B(x, 2r)} |\nabla f|^2\,d\mu
  \end{equation*}
      for all $f \in C^\infty_c(G)$, where $f_{x,r} = \fint_{B(x,r)} f\,d\mu$ is the mean of $f$ over $B(x,r)$.
    \item The heat kernel upper bounds
      \begin{equation*}
        p_t(x,y) \leqslant C(n) \frac{\left(1+\frac{d(x,y)^2}{4t}\right)^{\kappa(n)}}{V(\sqrt{t})} \exp\left(-\frac{d(x,y)^2}{4t}\right)
      \end{equation*}
      where the exponent $\kappa(n)$ also depends only on $n$.
    \item The heat kernel time derivative upper bounds
      \begin{equation*}
        |\partial^k_t p_t(x,y)| \leqslant C(n,k) \frac{\left(1+\frac{d(x,y)^2}{4t}\right)^{k+\kappa(n)}}{t^k V(\sqrt{t})} \exp\left(-\frac{d(x,y)^2}{4t}\right).
      \end{equation*}
    \item The heat kernel lower bound
      \begin{equation*}
        p_t(x,y) \geqslant c(n) \frac{1}{V(\sqrt{t})} \exp\left(-A(n) \frac{d(x,y)^2}{t}\right).
      \end{equation*}
    \item       The parabolic Harnack inequality
      \begin{equation*}
        \sup_{Q_-} \{u\} \leqslant H(n) \inf_{Q_+} \{u\}
      \end{equation*}
      for every positive solution $u$ of the heat equation in a heat ball $(s, s+4r^2) \times B(x, 2r)$, where $Q_- = (s+r^2, s+2r^2) \times B(x,r)$ and $Q_+ = (s+3r^2, s+4r^2) \times B(x,r)$.
      \setcounter{keepgoing}{\value{enumi}}
  \end{enumerate}
  Moreover, if $G$ is compact, then the following hold as well.
  \begin{enumerate}
    \setcounter{enumi}{\value{keepgoing}}
  \item The spectral gap upper bound
    \begin{equation*}
      \lambda \leqslant C(n) \mathrm{diam}^{-2}
    \end{equation*}
    which matches the well-known universal lower bound $\lambda \geqslant \frac{\pi^2}{4} \mathrm{diam}^{-2}$ due to Peter Li \cite{LiP1980a}.
  \item The spectral counting function lower bound
    \begin{equation*}
      \mathfrak{W}(s) \geqslant c(n) \frac{\mathbf{V}}{V(s^{-1/2})}
    \end{equation*}
    which matches the universal upper bound $\mathfrak{W}(s) \leqslant C \frac{\mathbf{V}}{V(s^{-1/2})}$ obtained by Judge and Lyons in \cite{JudgeLyons2017}.
\item The $L^2$ heat kernel ergodicity estimates
  \begin{equation*}
    \frac{c(n)}{V(\sqrt{t})} e^{-2 \lambda t} \leqslant \| p_t - \mathbf{V}^{-1}\|_{L^2}^2 \leqslant \frac{C(n)}{V(\sqrt{t})} e^{-2 \lambda t}
  \end{equation*}
\item The $L^1$ heat kernel ergodicity estimate
  \begin{equation*}
    \|p_t - \mathbf{V}^{-1}\|_{L^1} \geqslant \frac{1}{\mathbf{V}} e^{-t \lambda}
  \end{equation*}
\item The heat kernel gradient estimate
  \begin{equation*}
    |\nabla p_t(e,x)| \leqslant C(n) \frac{1}{\sqrt{t} V(\sqrt{t})} \left(1 + \frac{d(e,x)^2}{4t}\right)^{\kappa'(n)} \exp\left(-\frac{d(e,x)^2}{4t}\right)
  \end{equation*}
  where the exponent $\kappa'(n)$ also depends only on $n$.  This implies in particular that $\|\nabla p_t\|_{L^1} \leqslant C'(n) t^{-1/2}$.
  \end{enumerate}
\end{theo}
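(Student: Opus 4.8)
\emph{Proof sketch.} The plan is to deduce every item from the uniform doubling established in Theorem~\ref{main-intro}, using the now-classical principle that, on a unimodular Lie group carrying a left-invariant metric, volume doubling alone forces an entire hierarchy of analytic and spectral inequalities whose constants depend only on the doubling constant.

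Fix $G = (\mathrm{SU}(2)\times\mathbb{R}^n)/H$ and a left-invariant Riemannian metric $g$. Since $G$ is unimodular and the Riemannian volume measure $\mu$ of $g$ is a scaling of the bi-invariant Haar measure, $(G, d, \mu)$ is a metric measure space whose volume doubling constant $D_g$ is, by Theorem~\ref{main-intro}, bounded above by a constant $D(G)$ depending only on $n$. As recalled in the text, $D_g$-doubling already implies the scale-invariant Poincar\'e inequality \eqref{poincare-intro} with constant $2r^2 D_g$ (see \cite[\S5.6.1]{Saloff-CosteBook2002}); bounding $D_g \le D(G)$ gives item (1) with $C(n)$ depending only on $n$. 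Next, the conjunction of volume doubling and the scale-invariant $L^2$-Poincar\'e inequality is equivalent, by the characterization of Grigor'yan and Saloff-Coste, to the scale-invariant parabolic Harnack inequality, with Harnack constant $H(n)$ controlled by $D(G)$ alone; this is item (5). The parabolic Harnack inequality in turn yields, with constants depending only on $D(G)$, the two-sided Gaussian heat kernel estimates of items (2) and (4) --- the polynomial correction exponents $\kappa(n)$ and the constant $A(n)$ being determined by the doubling exponent --- together with local H\"older continuity of caloric functions. Item (3), the bounds on the time derivatives $\partial_t^k p_t$, then follows by combining the on-diagonal bounds with the analyticity of the heat semigroup on $L^2$ (Cauchy estimates in complex time, or iteration of the Harnack inequality), as in \cite[\S8]{EldredgeGordinaSaloff-Coste2018}; the exponents again depend only on $n$. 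Throughout, as noted in the discussion preceding the theorem, one checks that the cited results use only unimodularity and doubling, so they apply to the non-compact members of the family such as $\mathrm{SU}(2)\times\mathbb{R}^n$.

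Now assume in addition that $G$ is compact, so that $\mathbf{V} = \mu(G) < \infty$ and $\Delta$ has discrete spectrum. Applying \eqref{poincare-intro} with $r = \mathrm{diam}$, where the ball is all of $G$ and $f_{x,r} = \mathbf{V}^{-1}\int_G f\,d\mu$, gives the spectral gap upper bound $\lambda \le C(n)\,\mathrm{diam}^{-2}$ of item (6), complementing Li's universal lower bound \cite{LiP1980a}. The Weyl counting lower bound of item (7) follows from the heat-trace identity $\sum_i e^{-\lambda_i t} = \int_G p_t(x,x)\,d\mu(x) \ge c(n)\,\mathbf{V}/V(\sqrt{t})$, using the diagonal case of the lower heat kernel bound in item (4), together with a Karamata--Tauberian argument made quantitative by the uniform doubling of $r \mapsto V(r)$; this matches the universal upper bound of Judge and Lyons \cite{JudgeLyons2017}. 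The $L^2$ and $L^1$ ergodicity estimates (8) and (9) come from the spectral decomposition $p_t - \mathbf{V}^{-1} = \sum_{i\ge 1} e^{-\lambda_i t}\varphi_i\otimes\varphi_i$ combined with the heat kernel bounds, and the heat kernel gradient estimate (10), hence $\|\nabla p_t\|_{L^1}\le C'(n)t^{-1/2}$, follows from the parabolic Harnack inequality via the Li--Yau / Cheng--Yau type argument carried out in \cite[\S8]{EldredgeGordinaSaloff-Coste2018}, all with constants depending on $D(G)$ and therefore only on $n$.

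The work here is essentially bookkeeping rather than new analysis: one must verify that each cited inequality is stated, or can be proved, in a form where the constant is an explicit function of the doubling constant (and, where relevant, the doubling exponent) alone, rather than of further geometric data, and that the compactness and finite-volume hypotheses used in \cite[\S8]{EldredgeGordinaSaloff-Coste2018} are genuinely needed only for items (6)--(10). The most delicate points in this verification are the time-derivative bound (3) and the gradient estimate (10), where one additionally invokes analyticity of the heat semigroup and De~Giorgi--Nash--Moser local regularity; none of this introduces dependence on anything beyond $n$. \qed
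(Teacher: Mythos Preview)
Your proposal is correct and follows exactly the approach the paper takes: the paper does not give a standalone proof of this theorem but simply states the items and refers to \cite[Section~8]{EldredgeGordinaSaloff-Coste2018} for the fact that each inequality holds with constants depending only on the doubling constant, which by Theorem~\ref{main-intro} depends only on $n$. Your sketch is a faithful (and somewhat more detailed) expansion of that reference, tracing the standard chain doubling $\Rightarrow$ Poincar\'e $\Rightarrow$ parabolic Harnack $\Rightarrow$ heat kernel bounds $\Rightarrow$ spectral and ergodicity estimates, and correctly noting which items require compactness.
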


\def\cprime{$'$}

\end{document}